\newtheorem{theorem}{Theorem}[section]
\newtheorem{lemma}[theorem]{Lemma}
\newtheorem{proposition}[theorem]{Proposition}
\theoremstyle{remark}
\newtheorem{remark}[theorem]{\it \bf{Remark}\/}
\newenvironment{acknowledgement}{\noindent{\bf Acknowledgement.~}}{}
\numberwithin{equation}{section}
\def\section{\@startsection{section}{1}%
  \z@{1.5\linespacing\@plus\linespacing}{.5\linespacing}%
  {\normalfont\bfseries\large\centering}}
\newcommand{\be}{\begin{equation}}
\newcommand{\ee}{\end{equation}}
\newcommand{\bea}{\begin{eqnarray}}
\newcommand{\eea}{\end{eqnarray}}
\newcommand{\bee}{\begin{eqnarray*}}
\newcommand{\eee}{\end{eqnarray*}}
\def\pa{\partial}
\def\na{\nabla}
\def\Eq{\mbox{\rm Eq}}
\def\RR{\mathbb{R}}
\def\SS{\mathbb{S}}
\def\ds{\displaystyle}
\def\ni{\noindent}
\def\bs{\bigskip}
\def\ms{\medskip}
\def\eps{\varepsilon}
\def\fref#1{{\rm (\ref{#1})}}
\def\pref#1{{\rm \ref{#1}}}
\def\calH{{\mathcal H}}
\def\calC{{\mathcal C}}
\def\calE{{\mathcal E}}
\def\calO{{\mathcal O}}
\def\calX{{\mathcal X}}
\def\X{{\mathcal X}}%
\def\e{\varepsilon}
\def\supess{\mathop{\operator@font Sup\,ess}}
\def\un{{\mathbbmss{1}}}
\title[Orbital stability of spherical galactic models]{Orbital stability of spherical galactic models}
\author[M. Lemou]{Mohammed Lemou}
\address{CNRS and IRMAR, Universit\'e de Rennes 1, France}
\email{mohammed.lemou@univ-rennes1.fr}
\author[F. M\'ehats]{Florian M\'ehats}
\address{IRMAR, Universit\'e de Rennes 1, France}
\email{florian.mehats@univ-rennes1.fr}
\author[P. Rapha\"el]{Pierre Rapha\"el}
\address{IMT, Universit\'e Paul Sabatier, Toulouse, France}
\email{pierre.raphael@math.univ-toulouse.fr}
\dedicatory{Dedicated to the memory of our friend Naoufel Ben Abdallah}
\begin{document}

\begin{abstract}
 We consider the three dimensional gravitational Vlasov Poisson system which is a canonical model in astrophysics to describe the dynamics of galactic clusters. A well known conjecture \cite{binney} is the stability of spherical models which are nonincreasing radially symmetric steady states solutions. This conjecture was proved at the linear level by several authors in the continuation of the breakthrough work by Antonov \cite{A1} in 1961. In the previous work \cite{LMR8}, we derived the stability of anisotropic models under {\it spherically symmetric perturbations} using fundamental monotonicity properties of the Hamiltonian under suitable generalized symmetric rearrangements first observed in the physics litterature  \cite{lynden, gardner, WZS, aly}. In this work, we show how this approach combined with a {\it new generalized} Antonov type coercivity property implies the orbital stability of spherical models under general perturbations.
\end{abstract}


\maketitle
\titlecontents{section} 
[1.5em] 
{\vspace*{0.1em}\bf} 
{\contentslabel{2.3em}} 
{\hspace*{-2.3em}} 
{\titlerule*[0.5pc]{\rm.}\contentspage\vspace*{0.1em}}

\vspace*{-2mm}



\section{Introduction and main results}



\subsection{The gravitational Vlasov Poisson system} 


We consider  the three dimensional gravitational Vlasov-Poisson system
\be
\label{vp}
\left   \{ \begin{array}{llll}
         \ds \pa_t f+v\cdot\nabla_x f-\nabla
         \phi_f \cdot\nabla_v f=0, \qquad (t,x,v)\in \RR_+\times \RR^3\times \RR^3\\[3mm]
          \ds f(t=0,x,v)=f_0(x,v)\geq 0,\\[3mm]
         \end{array}
\right.
\ee
where, throughout this paper,
\be
\label{phif}
 \rho_f(x)=\int_{\RR^3} f(x,v)\,dv\quad \mbox{and}\quad  \phi_f(x)=-\frac{1}{4\pi |x|}\ast \rho_f
\ee
are the density and the gravitational Poisson field associated to $f$. This nonlinear transport equation is a well known model in astrophysics for the description of the mechanical state of a stellar system subject to its own gravity and the dynamics of galaxies, see for instance \cite{binney,fridman}.

The global Cauchy problem is solved in \cite{LP,Pf,S1} where unique global classical solutions $f(t)$ in $ \mathcal C^1_c$, the space of $\mathcal C^1$ compactly supported functions, are derived. Two fundamental properties of the nonlinear transport flow (\ref{vp}) are then first the preservation of the total Hamiltonian 
\be
\label{defhamiltonian}
\calH(f(t))=  
          \ds \frac{1}{2}\int_{\RR^6}
         |v|^2  f(t,x,v)dxdv-\frac{1}{2}\int_{\RR^3}|\nabla
          \phi_f(t,x)|^2dx=\calH (f(0)),
\ee
and second the preservation of all the so-called Casimir functions: $\forall G \in \calC^1([0,+\infty), \RR^+)$ such that $G(0)=0$, 
\be
\label{loi1}
\qquad \int_{\RR^6}G(f(t,x,v))\,dxdv=\int_{\RR^6}G(f_0(x,v))\,dxdv\,.
\ee
Equivalently, consider the distribution function associated to $f$:  \be
\label{muf}
\forall s\geq 0,\quad \mu_f(s)=\mbox{meas}\left\{(x,v)\in \RR^6:\,f(x,v)>s\right\},
\ee
then \fref{loi1} means the conservation law associated to nonlinear transportation: 
\be
\label{consevrationmeaurre}
\forall t\geq 0, \ \ \mu_f(t)=\mu_{f_0}.
\ee

In this paper, we will deal with weak solutions in the natural energy space 
\be
\label{calE}
\calE=\left\{f\geq 0 \mbox{ with }f\in L^1\cap L^\infty(\RR^6)\mbox{ and }|v|^2f\in L^1(\RR^6)\right\}.
\ee
For all $f_0\in \calE$, \fref{vp} admits a weak solution $f(t)$, constructed for instance in \cite{arsenev,HorstHunze,illner-neunzert}, which is also a renormalized solution, see \cite{dpl1,dpl2}. Moreover, this solution still satisfies \fref{loi1}, belongs to $\calC([0,+\infty),L^1(\RR^6))$ and the energy conservation \fref{defhamiltonian} is replaced by an inequality:
\be
\label{decayenergy}
\forall t\geq 0,\quad \calH(f(t))\leq \calH(f_0).
\ee


\subsection{Previous results} 


Jean's theorem \cite{Batt} gives a complete classification of radially symmetric steady state solutions to \fref{vp}. Recall that radial symmetry in our setting means  $f(x,v)\equiv f(|x|,|v|,x\cdot v).$ They are of the form $$Q(x,v)=F(e,\ell)$$ where $e,\ell$ are respectively the microscopic energy and the kinetic momentum 
\be
\label{def-el}
e(x,v)=\frac{|v|^2}{2}+\phi_Q(x), \ \ \ell=|x\wedge v|^2
\ee and are the {\it only} two invariants of the radially symmetric characteristic flow associated to the transport operator $\tau=v\cdot\nabla_x-\nabla\phi_Q\cdot\nabla_v$.

 A canonical problem which has attracted a considerable amount of works both in the physical and the mathematical community is the question of the {\it nonlinear stability of steady states models}. The {\it linear} stability of all nonincreasing anisotropic models satisfying
 \be
 \label{nonincresaing}
 \frac{\partial F}{\partial e}<0
 \ee
 is derived by Doremus, Baumann and Feix \cite{doremus} (see also \cite{gillon,kandrup1,SDLP} for related works), following the pioneering work by Antonov in the 60's \cite{A1,A2}. This analysis is based  on some coercivity properties of the linearized Hamiltonian under constraints formally arising from the linearization of the Casimir conservation laws (\ref{loi1}), see Lynden-Bell \cite{lynden}, known as Antonov's coercivity property.
 
 At the nonlinear level, the full orbital stability in the natural energy space $\calE$ has been obtained for specific subclasses of steady states as a direct consequence of Lions' concentration compactness principle \cite{PLL1,PLL2}, see \cite{wol, Guo,GuoRein2,GuoRein,Guo2,Dol, S2, LMR-note,LMR1,LMR5, SS}. This powerful strategy however only applies to specific models which are {\it global} minimizers of the Hamiltonian \fref{defhamiltonian} under at most two Casimir type conservation laws, see \cite{LMR1,LMR5} for a more complete introduction.

A first attempt to treat the general case and use the full rigidity provided by the {\it continuum} of conservation laws \fref{loi1} is proposed in \cite{GR4}, \cite{GuoLin} where the first result of stability against radially symmetric perturbations is obtained for the King model $F(e)= \left(\exp(e_{0}-e) -1\right)_{+}$. The approach is based on Antonov's coercivity property and a direct linearization of the Hamiltonian near the King profile.

We proposed in \cite{LMR8} a different approach based on fine monotonicity properties of the Hamiltonian under suitable generalized symmetric rearrangements as first observed in pioneering  breakthrough works in the physics litterature, see in particular Lynden-Bell \cite{lynden}, Gardner \cite{gardner}, Wiechen, Ziegler, Schindler \cite{WZS}, Aly \cite{aly}. This approach avoids the delicate step of linearization of the Hamiltonian and reduces the stability problem for the full distribution function $f$ to {\it a minimization problem for a generalized energy involving the Poisson field $\phi_f$ only}. The main outcome is the {\it radial} stability of nondecreasing anisotropic models, proved in \cite{LMR8}:

\begin{theorem}[Radial stability of nonincreasing anisotropic models, \cite{LMR8}]
\label{theoremradial}
\sloppy
Let $Q(x,v)=F(e,\ell)$ be a continuous, nonnegative compactly supported steady state solution to \fref{vp}. Assume that $Q$ is nonincreasing in the following sense: there exists $e_0<0$ such that $F$  is  $\calC^1$  on  $\calO=\{(e,\ell)\in \RR\times\RR_+\,:\,\, F(e,\ell)>0\}\subset (-\infty,e_0)\times \RR_+$ and $$\frac{\partial F}{\partial e}<0 \ \ \mbox{on} \ \ \mathcal O.$$ Then $Q$ is stable in the energy norm by radially symmetric perturbations, ie: for all $M>0$, for all $\eps>0$, there exists $\eta>0$ such that given $f_0\in \calC^1_c$ radially symmetric with
\be
\label{etavieux}
\|f_0-Q\|_{L^1}\leq \eta,\quad \|f_0\|_{L^\infty}\leq \|Q\|_{L^\infty} +M,\quad |\calH(f_0)-\calH(Q)|\leq \eta,
\ee
the corresponding global strong solution $f(t)$ to \fref{vp} satisfies: 
\be
\label{eta2vieux}
\forall t\geq 0, \ \ \|(1+|v|^2)(f(t)-Q)\|_{L^1}\leq \eps.
\ee
\end{theorem}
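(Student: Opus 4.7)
The plan is to follow the rearrangement-based strategy of \cite{LMR8} announced in the introduction: avoid any direct linearization of $\calH$ near $Q$ and instead reduce the full stability question for $f$ to a scalar variational problem for the Poisson field $\phi_f$ alone. The starting point is the identity
$$\calH(f)=\min_\phi\left\{\int_{\RR^6}\Big(\tfrac{1}{2}|v|^2+\phi(x)\Big) f\,dx\,dv+\tfrac{1}{2}\int_{\RR^3}|\nabla\phi|^2\,dx\right\},$$
with the minimum attained precisely at $\phi=\phi_f$, combined with the Schwarz rearrangement inequality at fixed $\phi$:
$$\int_{\RR^6}e_\phi(x,v)\,f(x,v)\,dx\,dv\;\geq\;\int_{\RR^6}e_\phi(x,v)\,f^{*\phi}(x,v)\,dx\,dv,\qquad e_\phi:=\tfrac{1}{2}|v|^2+\phi(x),$$
where $f^{*\phi}$ is the unique radial function equimeasurable with $f$ on each angular-momentum shell $\{|x\wedge v|^2=\mathrm{const}\}$ and nonincreasing in $e_\phi$. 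Setting $\mathcal{J}(\phi,\mu):=\int e_\phi f^{*\phi}\,dx\,dv+\tfrac12\int|\nabla\phi|^2\,dx$ for any $f$ with joint distribution function $\mu$ on the $\ell$-shells, one obtains the lower bound $\calH(f)\geq\inf_\phi \mathcal{J}(\phi,\mu_f)=:\widetilde{\mathcal{J}}(\mu_f)$, a quantity frozen along the flow thanks to the conservation \fref{consevrationmeaurre}.

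I would then prove that $\phi_Q$ is a strict local minimum of $\mathcal{J}(\cdot,\mu_Q)$ with value $\widetilde{\mathcal{J}}(\mu_Q)=\calH(Q)$. The strict monotonicity $\partial_e F<0$ on $\calO$ ensures that $Q$ is its own rearrangement $Q^{*\phi_Q}$, and a formal Euler--Lagrange computation identifies $\phi_Q$ as a critical point. The delicate part is to couple this with a coercivity estimate
$$\mathcal{J}(\phi,\mu_Q)-\calH(Q)\;\gtrsim\;\|\nabla(\phi-\phi_Q)\|_{L^2}^2$$
valid for $\phi$ in a $\dot H^1$-neighborhood of $\phi_Q$. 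Given this, the theorem follows by a standard contradiction/concentration-compactness argument: a sequence $(f_0^n)$ satisfying \fref{etavieux} with $\eta_n\to 0$ but violating \fref{eta2vieux} at times $t_n$ would, via \fref{decayenergy} together with $\mu_{f^n(t_n)}=\mu_{f_0^n}\to\mu_Q$, yield a nearly-minimizing sequence for $\mathcal{J}(\cdot,\mu_Q)$; the coercivity and the compactness of radial $\dot H^1$-bounded Poisson fields force $\phi_{f^n(t_n)}\to\phi_Q$, and reversing the rearrangement chain upgrades this to weighted $L^1$-convergence of $f^n(t_n)$ to $Q$, a contradiction.

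The main obstacle I anticipate is the verification of the coercive lower bound on $\mathcal{J}(\cdot,\mu_Q)$ near $\phi_Q$. The map $\phi\mapsto Q^{*\phi}$ is nonlinear and only Lipschitz in $\phi$, so a direct second-variation argument is not available; instead I would slice according to $\ell$-shells and run, for each fixed $\ell$, a one-dimensional layer-cake expansion converting equimeasurability into a quantitative estimate, using the strict monotonicity $\partial_e F<0$ in an essential way. The radial symmetry of the perturbation enters precisely here: it reduces the geometry of the level sets $\{e_\phi<e\}$ to a one-dimensional problem parametrized by $|x|$, and it is what allows the needed compactness of $\phi_{f^n(t_n)}$ in $\dot H^1$ (translations cannot be used to defocalize mass). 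This is also the ultimate limitation of the theorem: for non-radial perturbations both of these ingredients must be replaced by new tools, which is the object of the present paper through the generalized Antonov-type coercivity announced in the abstract.
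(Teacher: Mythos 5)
First, a point of record: the paper does not prove Theorem \ref{theoremradial} at all --- it is quoted from \cite{LMR8}, and only its strategy is summarized in Section 1.4, the analogous machinery being developed in Sections 2--4 for the harder non-radial problem. Your proposal correctly reconstructs that architecture: the dual identity $\calH(f)=\min_\phi\{\int(\frac{|v|^2}{2}+\phi)f+\frac12\int|\nabla\phi|^2\}$ (this is \fref{identity}), the bathtub inequality at fixed $\phi$ and fixed $\ell$-shell (this is \fref{identity2}), the reduction to a functional $\mathcal J_{f^*}(\phi)$ of the potential alone as in \fref{defiphi}, the conservation of the joint distribution in $(f,\ell)$ via \fref{heoheoief}, and the final compactness step paralleling Proposition \ref{theo-compacite-f}.

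The genuine gap sits exactly at the step you flag as the main obstacle, and the route you propose there would fail. You want the local coercivity $\mathcal J(\phi,\mu_Q)-\calH(Q)\gtrsim\|\nabla(\phi-\phi_Q)\|_{L^2}^2$ and propose to get it from ``a one-dimensional layer-cake expansion converting equimeasurability into a quantitative estimate, using the strict monotonicity $\partial_e F<0$.'' Monotonicity and equimeasurability arguments only yield the first-order facts: the inequality $\calH(f)\geq\mathcal J_{f^*}(\phi_f)$ and the vanishing of the first variation at $\phi_Q$ (via $Q=Q^{*\phi_Q}$). The coercivity is a statement about the second variation, which takes the form
$$D^2\mathcal J(\phi_Q)(h,h)=\int_{\RR^3}|\nabla h|^2\,dx-\int_{\RR^6}\left|\frac{\partial F}{\partial e}\right|\left(h-\Pi_{e,\ell}h\right)^2dxdv,$$
and its positivity is precisely Antonov's inequality --- a separate, deep input that cannot be extracted from rearrangement considerations alone. (If it could, the identical scheme would prove non-radial stability of all anisotropic models, which is known to be false.) In \cite{LMR8} this positivity is obtained by integrating by parts along the linearized radial transport operator $\tau=v\cdot\nabla_x-\nabla\phi_Q\cdot\nabla_v$; in the present paper the generalized version \fref{nvekoeoeu} is proved by a H\"ormander-type weighted Poincar\'e argument (Proposition \ref{antonov}). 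Relatedly, your assertion that ``a direct second-variation argument is not available'' because $\phi\mapsto Q^{*\phi}$ is only Lipschitz is incorrect: Proposition \ref{lemA1} shows that $\lambda\mapsto\mathcal J(\phi_Q+\lambda h)$ is twice differentiable, with remainder $\eta(\|\phi-\phi_Q\|_{L^\infty})\|\nabla h\|_{L^2}^2$, precisely because $a_\phi$ is smooth in $\phi$ and ${Q^*}'\in L^1$. Two smaller omissions: since $\mu_{f_0}\neq\mu_Q$, one must control $\mathcal J_{f^*}(\phi)-\mathcal J_{Q^*}(\phi)$ by $\|\phi\|_{L^\infty}\|f^*-Q^*\|_{L^1}$ before invoking the coercivity (Step 1 of Proposition \ref{theo-compacite-f}); and the upgrade from $\dot H^1$ convergence of the potentials to weighted $L^1$ convergence of the distribution functions is not a formal ``reversal of the chain'' but requires the equality-case analysis of the bathtub principle carried out in Step 2 of that proposition.
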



\subsection{Statement of the result}  


Our aim in this paper is to extend the stability result of Theorem \ref{theoremradial} to the full set of non radial perturbations. Here we recall that the radial problem enjoys an additional rigidity because for $f(x.v)$ radially symmetric, the Casimir conservation laws \fref{loi1} can be extended as follows: $\forall G(h,\ell)\geq 0$, $\mathcal C^1$ with $G(0,\ell)=0$, 
\be
\label{heoheoief}
\int_{\RR^6} G(f(t,x,v),|x\wedge v|^2)dxdv=\int_{\RR^6} G(f_0(x,v),|x\wedge v|^2)dxdv.
\ee
 This additional conservation law is fundamental in the proof of Theorem \ref{theoremradial}, and at the linear level, it is intimately connected to Antonov's coercivity property which is essentially equivalent to the coercivity of the Hessian of the Hamiltonian \fref{defhamiltonian} under the {\it full set of linearized constraints generated by \fref{heoheoief}}.
 
 For the full non radial problem, \fref{heoheoief} is lost. However, we claim that the strategy developped in \cite{LMR8} coupled with a {\it new generalized Antonov coercivity property} allows us to derive the classical conjecture of orbital stability of nonincreasing spherical models.
 
 \begin{theorem}[Orbital stability of spherical models]
 \label{thm}
 Let $Q$ be a continuous, nonnegative, non zero, compactly supported steady solution to \fref{vp}. Assume that $Q$ is a nonincreasing spherical model in the following sense: there exists a continuous function $F:\RR \to \RR_+$ such that 
\be
\label{Q}
\forall (x,v)\in \RR^6, \ \ Q(x,v)=F\left(\frac{|v|^2}{2}+\phi_Q(x)\right),
\ee
and there exists $e_0<0$ such that $F(e)=0$ for $e\geq e_0$, $F$ is $\calC^1$ on $(-\infty,e_0)$ and 
\be
\label{decroissance}
F'<0 \ \ \mbox{on} \ \ (-\infty,e_0).
\ee
Then $Q$ is orbitally stable in the energy norm by the flow \fref{vp}: for all $M>0$, for all $\eps>0$, there exists $\eta>0$ such that, given $f_0\in \calE$ with
\be
\label{eta}
\|f_0-Q\|_{L^1}\leq \eta,\quad \calH(f_0)\leq \calH(Q)+ \eta,\quad  \|f_0\|_{L^\infty}\leq  \|Q\|_{L^\infty}+M,
\ee
for any weak solution $f(t)$ to \fref{vp}, there exists a translation shift $z(t)$ such that $\forall t\geq 0$,
\be
\label{eta2}
  \| (1+|v|^2)(f(t,x,v)-Q(x-z(t),v))\|_{L^1(\RR^6)}\leq \eps.
\ee
\end{theorem}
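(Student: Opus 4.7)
The plan is to follow the rearrangement-based strategy developed in \cite{LMR8} for Theorem \ref{theoremradial}, replacing the additional radial Casimir law \fref{heoheoief} by a modulation along the three-dimensional $x$-translation group, and supplying the missing compactness by a new Antonov-type coercivity inequality adapted to general (non-radial) perturbations.

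\medskip

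\textbf{Step 1: Reduction to a functional of the Poisson field.} Following \cite{LMR8}, to any $f\in\calE$ equimeasurable with $f_0$ associate the Schwarz-type rearrangement $f^\star$ with respect to the microscopic energy $e_f(x,v)=|v|^2/2+\phi_f(x)$: the unique equimeasurable rearrangement of $f$ that is a nonincreasing function of $e_f$. A bathtub argument yields
\be
\label{plan:rearr}
\calH(f)\ \geq\ \mathcal{J}(\phi_f),\qquad \mathcal{J}(\phi)\,:=\,\calH\!\bigl(f^\star_\phi\bigr),
\ee
where $f^\star_\phi$ denotes the rearrangement at fixed distribution function $\mu_{f_0}$ and fixed potential $\phi$. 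Since $Q=Q^\star$, equality holds in \fref{plan:rearr} at $f=Q$, so $\phi_Q$ is a critical point of $\mathcal{J}$, and by \fref{decayenergy} one has $\mathcal{J}(\phi_{f(t)})\leq\calH(Q)+\eta$ for all $t\geq 0$. This reduces the full Vlasov stability to the study of $\mathcal{J}$ near $\phi_Q$ in $\dot H^1(\RR^3)$, and crucially bypasses any linearization of the transport equation.

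\medskip

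\textbf{Step 2: Modulation along translations.} The flow \fref{vp} commutes with $x$-translations, which produces three zero modes $\partial_{x_i}\phi_Q$ in the second variation of $\mathcal{J}$ at $\phi_Q$. For $f(t)$ whose Poisson field is close to $\phi_Q$, select $z(t)\in\RR^3$ by a standard implicit function argument as a minimizer of $z\mapsto\|\phi_{f(t)}(\cdot+z)-\phi_Q\|_{\dot H^1}^2$, equivalently imposing the orthogonality conditions
\be
\int_{\RR^3}\nabla\!\bigl(\phi_{f(t)}(\cdot+z(t))-\phi_Q\bigr)\cdot\nabla\bigl(\partial_{x_i}\phi_Q\bigr)\,dx=0,\qquad i=1,2,3.
\ee

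\medskip

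\textbf{Step 3: Generalized Antonov coercivity.} This is the core new analytical input. Expanding $\mathcal{J}(\phi_Q+\psi)=\mathcal{J}(\phi_Q)+\tfrac12\mathcal{K}(\psi)+o(\|\psi\|_{\dot H^1}^2)$ produces, thanks to \fref{decroissance}, a quadratic form of the schematic shape
\be
\mathcal{K}(\psi)=\int_{\RR^3}|\nabla\psi|^2\,dx\ -\ \int_{\RR^6}|F'(e_Q)|\bigl(\psi(x)-\overline{\psi}(e_Q(x,v))\bigr)^2 dx\,dv,
\ee
where $\overline{\psi}(e)$ denotes the average of $\psi$ along the transport flow of $\tau=v\cdot\nabla_x-\nabla\phi_Q\cdot\nabla_v$ on the energy shell $\{e_Q=e\}$. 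The claim is the coercivity
\be
\mathcal{K}(\psi)\ \geq\ c\,\|\psi\|_{\dot H^1}^2
\ee
for all $\psi\in\dot H^1(\RR^3)$ satisfying the three orthogonality conditions of Step 2. The proof decomposes $\psi$ in spherical harmonics on $\RR^3$: the radial ($n=0$) sector is controlled by the classical Antonov inequality \cite{A1,doremus}; for $n\geq 2$, coercivity follows from a Poincar\'e-type bound on the energy shells of $Q$ combined with the strictly positive contribution of the angular kinetic term acting on nontrivial spherical harmonics; the $n=1$ sector is precisely where the translation zero modes $\partial_{x_i}\phi_Q$ sit, and the spectral gap is recovered by using the three orthogonality conditions to factor them out.

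\medskip

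\textbf{Step 4: Closing the stability estimate.} Combining \fref{plan:rearr}, the energy inequality \fref{decayenergy}, the smallness $\calH(f_0)\leq \calH(Q)+\eta$ from \fref{eta}, and the coercivity of Step 3 yields a uniform bound $\|\phi_{f(t)}(\cdot+z(t))-\phi_Q\|_{\dot H^1}^2\lesssim \eta$. The weighted $L^1$ control \fref{eta2} then follows along the lines of \cite{LMR8}: one compares $f(t,\cdot+z(t),\cdot)$ first to its energy rearrangement $f^\star_{\phi_{f(t)}}(\cdot+z(t))$ (which is close to $Q$ by \fref{consevrationmeaurre} and the closeness of the potentials), and absorbs the $(1+|v|^2)$ weight through the $L^\infty$ bound and the kinetic-energy part of $\calH(f_0)-\calH(Q)$ provided by \fref{eta}.

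\medskip

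\textbf{Main obstacle.} The hard part is the $n=1$ piece of Step 3. The kernel of $\mathcal{K}$ on this sector is exactly the three-dimensional space $\mathrm{span}\{\partial_{x_i}\phi_Q\}_{i=1,2,3}$, and extracting a sharp spectral gap once the orthogonality conditions are imposed requires a delicate one-dimensional variational argument that exploits the strict monotonicity $F'<0$ from \fref{decroissance} and the compactness of $\mathrm{supp}\,Q$; this is what deserves to be called the \emph{generalized} Antonov inequality announced in the introduction.
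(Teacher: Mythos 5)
Your overall architecture (reduction to the reduced functional $\mathcal J$ of the Poisson field, modulation along translations, coercivity of the Hessian, and then closing the estimate) matches the paper's strategy, and Steps 1, 2 and 4 are essentially correct as sketches. But Step 3 contains a genuine gap at exactly the point where the new idea of the paper is required. The Hessian of $\mathcal J$ at $\phi_Q$ is \fref{cneoeojo}, where $\Pi$ given by \fref{phiphi} projects onto functions of the microscopic energy $e$ \emph{alone}. The classical Antonov inequality controls the quadratic form \fref{cnoeor}, built with the projection $\Pi_{e,\ell}$ onto functions of $(e,\ell)$. Since functions of $e$ form a strict subspace of functions of $(e,\ell)$, one has $\int|F'|(h-\Pi h)^2\geq\int|\partial_eF|(h-\Pi_{e,\ell}h)^2$, so \fref{cneoeojo} is \emph{smaller} than \fref{cnoeor}: positivity of the new form is strictly stronger than the classical Antonov statement and does not follow from it. Your claim that ``the radial ($n=0$) sector is controlled by the classical Antonov inequality'' therefore does not close; moreover you locate the difficulty in the $n=1$ sector, whereas the nonradial sectors ($n\geq 1$) are handled by a routine Sturm--Liouville argument ($\phi_Q'>0$ is the ground state of $A_1$, and $A_k$ for $k\geq 2$ is strictly positive), and the delicate part is precisely the radial sector. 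The missing ingredient is the H\"ormander-type proof of a sharp weighted Poincar\'e inequality: one introduces the operator $T$ of \fref{deft} whose image realizes the kernel of $\Pi$, integrates by parts, and uses the convexity estimate \fref{keyestimateg} with $g=\bigl(r\sqrt{2(e-\phi_Q)}\bigr)^3$, which encodes the nonlinear structure of the steady state equation through the Poisson equation for $\phi_Q$. Without this (or an equivalent substitute) the coercivity on the radial sector, and hence the whole stability argument, is unproven.

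A secondary caveat: in Step 4 the passage from convergence of the potentials to the weighted $L^1$ convergence \fref{eta2} of the full distribution function is not a routine transcription of the radial case; the paper needs a careful bathtub-type equality analysis (the sets $S_1^n$, $S_2^n$, $\overline S_2^n$ and the pseudo-inverse $(f_n^*\circ a_{\phi_Q})^{-1}$) exploiting the continuity and strict monotonicity of $F$. This is fixable along the lines you indicate, but it is not automatic.
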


\bs
\ni
{\it Comments on Theorem \pref{thm}.}

\ms
\ni
{\it 1. On the assumption on $Q$}. Jean's theorem \cite{Batt} ensures that the assumptions we make on $Q$ are very general. Note that  we allow $F'$ to blow up on the boundary $e\to e_0$ which is known to happen for many standard models. We in particular extract from \cite{binney} two models of physical relevance which fit into our analysis:
\begin{itemize}
\item[--] The generalized polytropic models:
$$F(e)=  \sum_{0\leq i,j\leq N} \alpha_{ij} (e_{0}-e)_{+}^{q_i} , \ \ 0<q_i<\frac{7}{2}, \ \ \alpha_{ij}\geq 0.$$
\item[--] The King model:
$$F(e)= \left(\exp(e_{0}-e) -1\right)_{+} \ \ \mbox{for some} \ \  e_0<0.$$
\end{itemize}

\ms
\ni
{\it 2. Anisotropic models}. Note that Theorem \ref{thm} deals with spherical models $Q=F(e)$ while the full class of anisotropic models $Q=F(e,\ell)$ is considered in Theorem \ref{theoremradial}. Let us insist that the orbital stability of all anisotropic models with respect to non radial perturbations {\it is not expected to hold in general} (see \cite{binney}) and nonradial instability mechanisms may happen induced by the non trivial dependence on kinetic momentum. We present a full non radial approach for spherical models only which is a canonical class, but which is likely not to be optimal. The derivation of sharp criterions of stability or instability for anisotropic models under non radial perturbations remains to be done.

\ms
\ni
{\it 3. Quantitative bounds}. The proof of Theorem \ref{thm} will rely on a compactness argument, and one could ask for more quantitative bounds. Such bounds are available for the Poisson field and a consequence of our analysis is that for $f\in \mathcal E$ satisfying \fref{eta}, we can find $z_f\in \RR^3$ such that 
$$\mathcal H(f)-\mathcal H(Q)+\|\phi_f\|_{L^\infty}\|f^*-Q^*\|_{L^1}\geq c_0\|\nabla \phi_f-\nabla \phi_Q(\cdot-z_f)\|_{L^2}^2$$ for some universal constant $c_0>0$, see \fref{controlH-fstar-J}, where $f^*$ and $Q^*$ denote respectively the usual symmetric decreasing rearrangements of $f$ and $Q$, as defined in Lemma \ref{schwarz}. The quantitative control of the full distribution function however seems to involve more subtle norms and would rely on weighted estimates for the bathtub principles, see \fref{identity2}. Such estimates were derived in the context of the incompressible 2D Euler in \cite{MarPul, MarWan}, but they seem to be more involved in our case due to the nonlinear structure of the generalized symmetric rearrangement that we consider, see \fref{dhieoyoye}.


\subsection{Strategy of the proof} Let us give a brief insight into the strategy of the proof of Theorem \ref{thm} which extends the approach introduced in \cite{LMR8}.


\ms
\ni
{\bf Step 1.} Monotonicity of the Hamiltonian under generalized symmetric rearrangements.

\ms
\ni
Let us define the Schwarz symmetrization of $f$ as 
\be
\label{shwartzsymm}
f^*(s)=\inf\{\tau\geq 0 \ \ : \ \ \mu_f(\tau)\leq s\},
\ee 
where $\mu_f$ is defined by \fref{muf}, which is the unique decreasing function on $\RR_+$ with $$\mu_{f}=\mu_{f^*}.$$ Given a potential $\phi$ in a suitable "Poisson field" class, we define the generalized symmetric nonincreasing rearrangement of $f$ with respect to the microscopic energy $e=\frac{|v|^2}{2}+\phi(x)$ as the unique function of $e$ which is equimeasurable to $f$, explicitely 
\be
\label{dhieoyoye}
f^{*\phi}(x,v)=f^*\circ a_{\phi}(e(x,v)), \ \ a_{\phi}(e)=\mbox{meas}\{(x,v)\in \RR^6, \ \ \frac{|v|^2}{2}+\phi(x)<e\}.
\ee
Any nonincreasing spherical steady state solution to \fref{vp} is a fixed point of this transformation when generated by its own Poisson field: 
\be
\label{vhohgoeri}
Q^{*\phi_Q}=Q.
\ee
Moreover, the Hamiltonian \fref{defhamiltonian} enjoys a nonlinear monotonicity property which was first observed in the physics litterature, see in particular Aly \cite{aly}: 
\be
\label{monotonicityhamiliotonanintro}
\mathcal H(f)\geq \mathcal H( f^{*\phi_f}).
\ee 
For perturbations which are equimeasurable to $Q$ ie 
\be
\label{hy[equi}
f^*=Q^*,
\ee
we can more precisely lower bound the Hamiltonian by a functional which depends on the Poisson field only: 
\be
\label{nvoehoehi}
\mathcal H(f)-\mathcal H(Q)\geq \mathcal J( \phi_f)-\mathcal J(\phi_Q)
\ee where $\mathcal J$ can be interpreted as a generalized energy, \cite{lynden}: $$\mathcal J(\phi_f)=\mathcal H(Q^{*\phi_f})+\frac12\int_{\RR^3}|\nabla \phi_{Q^{*\phi_f}}-\nabla \phi_{f}|^2.$$

\ms
\ni
{\bf Step 2.} Coercivity of the Hessian: a Poincar\'e inequality.

\ms
\ni
We now linearize the functional $\mathcal J$ at $\phi_Q$. The linear term drops thanks to the Euler-Lagrange equation \fref{vhohgoeri} and the Hessian takes the following remarkable form 
\be
\label{cneoeojo}
D^2\mathcal J(\phi_Q)(h,h)=\int_{\RR^3}|\nabla h|^2-\int_{\RR^6}|F'(e)|(h-\Pi h)^2dxdv
\ee
 where $\Pi$, defined by \fref{phiphi}, denotes after a suitable {\it phase space} change of variables the projection of $h$ onto the functions which depend only on the microscopic energy $e$. A similar structure occured in \cite{LMR8} where the corresponding quadratic form was
\be
\label{cnoeor}
\int_{\RR^3}|\nabla h|^2-\int_{\RR^6}\left|\frac{\partial F}{\partial e}(e,\ell)\right|(h-\Pi_{e,\ell} h)^2dxdv
\ee
and where $\Pi_{e,\ell}$ corresponds to the projection onto functions which depend on $(e,\ell)$ only ($e$ and $\ell$ being defined by \fref{def-el}). The strict coercivity of the quadratic form \fref{cnoeor} was then equivalent to Antonov's stability result, but this statement is no longer sufficient in our setting as \fref{cnoeor} is lower bounded by \fref{cneoeojo}.

We now claim the positivity of \fref{cneoeojo} for spherical models 
\be
\label{nvekoeoeu}
D^2\mathcal J(\phi_Q)(h,h)=\int_{\RR^3}|\nabla h|^2-\int_{\RR^6}|F'(e)|(h-\Pi h)^2dxdv\geq 0,
\ee
and in fact the quadratic form is coercive up to the degeneracy induced by translation invariance\footnote{see Proposition \ref{antonov} for precise statements}. For this, we reinterpret \fref{nvekoeoeu}  as a generalized Poincar\'e inequality with sharp constant, and we claim that the classical approach developed by H\"ormander \cite{horm1,horm2} for the proof of sharp weighted $L^2$ Poincar\'e inequalities: $$d\mu=e^{-V(x)}dx, \ \ \int_{\RR^N}(f-\overline{f})^2d\mu\lesssim  \int_{\RR^N}|\nabla f|^2d\mu, \ \ \overline{f}=\frac{\int_{\RR^N}fd\mu}{\int_{\RR^N}d\mu}$$ under the convexity assumption 
\be
\label{vnkoourgo}
\nabla^2V\gtrsim 1
\ee 
can be adapted to our setting. In particular, the non trivial convexity property \fref{vnkoourgo} appears in the setting of \fref{nvekoeoeu} as a consequence of the non linear structure of the steady state equation \fref{vhohgoeri}, see \fref{keyestimateg}.

\bs
\ni
{\bf Step 3.} Compactness up to translations.

\ms
\ni
The outcome of Step 2 is the variational characterization of $Q, \phi_Q$ respectively as the locally unique (up to translation shift) minimizers of the respectively constrained and unconstrained minimization problems $$\inf_{f^*=Q^*} \mathcal H(f),\ \ \inf \mathcal J(\phi).$$ More precisely, we will show that $\mathcal J(\phi)-\mathcal J(\phi_Q)$ controls the distance of $\phi$ to the manifold of translated Poisson fields $\phi_Q(\cdot+x)$, $x\in \RR^3$, see Proposition \ref{propJ}.

From standard continuity arguments,  the conservation law \fref{consevrationmeaurre} and the inequality \fref{decayenergy} ensure that Theorem \ref{thm} is now  equivalent to the relative compactness in the energy space up to translation of generalized minimizing sequences: $$f_n^*\to Q^* \ \ \mbox{in} \ \ L^1 \ \ \mbox{and} \ \ \limsup_{n\to +\infty}\mathcal H(f_n)\leq  \mathcal H(Q).$$ A slight improvement of the lower bound \fref{nvoehoehi} implies first the relative compactness up to translations $$\nabla \phi_{f_n}(\cdot+x_n)\to \nabla \phi_Q \ \ \mbox{in} \ \ L^2(\RR^3).$$ The strong convergence in the energy norm of the full distribution function  now follows from a further use of the extra terms in the monotonicity property \fref{monotonicityhamiliotonanintro} which yields: $$\int(1+|v|^2)|f_n(x+x_n,v)-Q(x,v)|dxdv \to 0 \ \ \mbox{as} \ \ n\to +\infty$$ and enables to conclude the proof of Theorem  \ref{thm}.
 
\medskip

The paper is organized as follows. In section 2, we show how a suitable phase-space symmetrization allows to reduce the study of the Hamiltonian $\mathcal H$ to the study of a  functional $\mathcal J$ which  depends on the Poisson field $\phi_f$ only. In section \ref{sectionJ},  we show that $\phi_{Q}$ is a local minimizer of this new functional  and that $\mathcal J(\phi)-\mathcal J(\phi_Q)$ controls the distance of $\phi$ to the manifold of translated functions $\phi_Q(\cdot+z)$, $z\in \RR^3$, Proposition \ref{propJ}.
In section 4, a sharp use of the monotonicity properties for both functionals $\mathcal H$ and $\mathcal J$  yields the compactness of the whole minimizing distribution functions. The proof of Theorem \ref{thm} then follows in section 5.

\bigskip

\begin{acknowledgement}
The authors are endebted to thank Frank Barthe who pointed out to us H\"ormander's proof of weighted Poincar\'e inequalities. M. Lemou was supported by the Agence Nationale de la Recherche, ANR Jeunes Chercheurs MNEC. F. M\'ehats was supported by the Agence Nationale de la Recherche, ANR project QUATRAIN. P. Rapha\"el was supported by the Agence Nationale de la Recherche, ANR projet SWAP. M. Lemou and F. M\'ehats acknowledge partial support from the ANR project CBDif.
\end{acknowledgement}


\section{Reduction to a functional of the gravitational potential}


In this section, we introduce the notion of rearrangement with respect to a given Poisson type field, and show the monotonicity of the Hamiltonian under the corresponding transformation which allows to compare the minimization problem of $\mathcal H(f)$ under the constraint $f^*=Q^*$ to an unconstrained minimization problem on the Poisson field $\phi_f$ only. Our approach extends the one we developed in \cite{LMR8} to the case of non radial potentials, and most arguments are in fact simplified by the absence of kinetic momentum.

\subsection{Properties of Poisson fields}


Let us start with defining a suitable class of "Poisson type" potentials:
$$
\mathbf \calX=\left\{\phi \in \calC(\RR^3)\mbox{ such that }\phi\leq 0,\,\,\lim_{|x|\to +\infty}\phi(x)=0,\,\,  \na \phi\in L^2(\RR^3) \mbox{ and  } m(\phi)>0\right\}
$$
where
 \be
\label{mphi}
m(\phi):=\inf_{x\in\RR^3}(1+|x|)|\phi(x)|
\ee
Notice that \fref{mphi} implies:
\be
\label{mphisomme}
\forall \phi,\tilde{\phi}\in \calX, \ \ \forall \lambda>0, \ \ m(\phi+\widetilde \phi)\geq m(\phi)+m(\widetilde \phi),\quad m(\lambda\phi)=\lambda m(\phi),
\ee
and thus $\calX$ is convex. Moreover, there holds:

\begin{lemma}[Properties of Poisson fields]
\label{lemX}
Let $f\in\calE$ nonzero and $\phi_f$ be its Poisson field given by \fref{phif}, then  $\phi_f\in \calX$.
\end{lemma}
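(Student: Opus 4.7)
The plan is to verify in turn the four defining properties of $\calX$: negativity, continuity with decay at infinity, $L^2$ integrability of the gradient, and the lower bound $m(\phi_f)>0$. Negativity is immediate from the formula \fref{phif}, since $\rho_f\geq 0$. The main effort will concentrate on the last property, the three others following from standard potential theory once adequate $L^p$ control on $\rho_f$ is in hand.

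For the integrability properties, I would first derive $\rho_f \in L^1\cap L^{5/3}$ via the classical splitting in $v$: for each $x\in\RR^3$ and $R>0$,
\[
\rho_f(x) \leq \|f\|_{L^\infty}\cdot\frac{4\pi R^3}{3} + \frac{1}{R^2}\int_{\RR^3}|v|^2 f(x,v)\,dv,
\]
and optimizing in $R$ yields a pointwise bound giving $\rho_f \in L^{5/3}$ with norm controlled by $\|f\|_{L^\infty}$ and $\||v|^2 f\|_{L^1}$. Interpolation then places $\rho_f \in L^{6/5}$, and the Hardy--Littlewood--Sobolev inequality applied to
\[
\|\nabla\phi_f\|_{L^2}^2 = \frac{1}{4\pi}\iint\frac{\rho_f(x)\rho_f(y)}{|x-y|}\,dx\,dy
\]
gives $\nabla\phi_f \in L^2(\RR^3)$. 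Continuity and decay at infinity follow by approximating $\rho_f$ in $L^1\cap L^{5/3}$ by compactly supported continuous functions $\rho^\eps$: a Young's convolution estimate using the decomposition $1/|x| = \un_{|x|\leq 1}/|x| + \un_{|x|>1}/|x|$, with the first piece in $L^{5/2}$ and the second in $L^\infty$, gives $\|\phi_f-\phi^\eps\|_{L^\infty}\to 0$ as $\eps\to 0$. Each $\phi^\eps$ is continuous and vanishes at infinity (Newtonian potential of a compactly supported density in $L^p$, $p>3/2$), and these properties transfer to $\phi_f$.

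The main obstacle, and the nontrivial point, is to show $m(\phi_f)>0$. Here I would exploit the triangle inequality $|x-y|\leq |x|+|y|$ to obtain the pointwise lower bound
\[
|\phi_f(x)| = \frac{1}{4\pi}\int_{\RR^3}\frac{\rho_f(y)}{|x-y|}\,dy \geq \frac{1}{4\pi}\int_{\RR^3}\frac{\rho_f(y)}{|x|+|y|}\,dy.
\]
Since $f\geq 0$ is nonzero, $\rho_f$ is nonzero in $L^1$; choosing $R$ large enough that $\int_{|y|\leq R}\rho_f(y)\,dy \geq \tfrac12\|\rho_f\|_{L^1}$ and restricting the above integral to $\{|y|\leq R\}$ leads to
\[
(1+|x|)|\phi_f(x)| \geq \frac{1+|x|}{|x|+R}\cdot\frac{\|\rho_f\|_{L^1}}{8\pi} \geq \min(1,1/R)\cdot\frac{\|\rho_f\|_{L^1}}{8\pi} > 0
\]
uniformly in $x\in\RR^3$. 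Taking the infimum in $x$ yields $m(\phi_f)>0$ and concludes the proof of the lemma.
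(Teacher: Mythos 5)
Your proof is correct and follows essentially the same route as the paper: the same kinetic interpolation giving $\rho_f\in L^1\cap L^{5/3}$, and the identical mass-localization argument ($\int_{|y|\le R}\rho_f\ge\tfrac12\|\rho_f\|_{L^1}$ together with $|x-y|\le|x|+R$) for the key property $m(\phi_f)>0$. The only cosmetic difference is that you obtain continuity and decay of $\phi_f$ by kernel splitting and Young's inequality, whereas the paper invokes elliptic regularity and the Sobolev embedding $W^{2,5/3}_{loc}\hookrightarrow \calC^{0,1/5}$; both are standard and equivalent here.
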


\begin{proof}
Let $f\in \calE$, nonzero. From standard interpolation estimates, $\rho_f\in L^{5/3}\cap L^1$. Hence, by elliptic regularity, $\phi_f\in W^{2,5/3}_{loc}$, $\na \phi_f \in L^2(\RR^3)$ and $\phi_f\in C^{0,\frac{1}{5}}$ by Sobolev embedding. Also $\phi_f\leq 0$ and $\phi_f(x)\to 0$ as $|x|\to +\infty$ from \fref{phif}. In particular, $\phi_f$ attains its infimum on $\RR^3$ with $$-\infty<\min \phi_f\leq 0.$$It remains to show that $m(\phi)>0$ which follows from the existence of $C_{f}>0$ such that:
\be
\label{cphi}
\forall x\in \RR^3,\quad \phi(x)\leq -\frac{C_f}{1+|x|}.
\ee
Indeed, pick $R>0$ such that
$$\int_{|y|<R}\rho_f(y)dy\geq \frac{\|f\|_{L^1}}{2}>0,$$
and estimate for $|x|>R$:
$$-\phi(x)=\int_{\RR^3}\frac{\rho_f(y)}{4\pi|x-y|}dy\geq \int_{|y|<R}\frac{\rho_f(y)}{4\pi(|x|+R)}dy\geq \frac{\|f\|_{L^1}}{8\pi(|x|+R)},$$ which yields \fref{cphi}. The proof of Lemma \ref{lemX} is complete.
\end{proof}

Let us now associate to $\phi\in \calX$ the following Jacobian function:

\begin{lemma}[Properties of the Jacobian $a_{\phi}$]
\label{lemaphi}
Let $\phi\in \mathbf \calX$. We define the Jacobian function $a_\phi:\RR^*_-\to \RR^+$ as:
$$
\forall e<0, \ \ a_\phi(e)=\mbox{\rm meas}\left\{(x,v)\in \RR^6:\, \frac{|v|^2}{2}+\phi(x)<e\right\}.
$$
Then:
\begin{itemize}
\item[(i)] There holds the explicit formula:
\be
\label{aphi}
\forall e<0, \ \ a_\phi(e)=\frac{8\pi\sqrt{2}}{3} \int_{\RR^3}\left(e-\phi(x)\right)^{3/2}_+dx.
\ee
In particular, $a_\phi(e)=0$ for all $e<\min \phi$;
\item[(ii)] $a_\phi$ is $\calC^1$ on $(-\infty,0)$ and is a  strictly increasing $\calC^1$ diffeomorphism from $[\min \phi,0)$ onto $\RR_+$.
\end{itemize}
\end{lemma}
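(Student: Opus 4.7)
\textbf{Proof plan for Lemma \ref{lemaphi}.}

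For part (i), the plan is to apply Fubini's theorem to the superlevel set $\{(x,v)\in\RR^6:\,|v|^2/2+\phi(x)<e\}$. For each fixed $x$, the slice in $v$ is the open Euclidean ball $\{v:\,|v|^2<2(e-\phi(x))\}$, which is nonempty precisely when $\phi(x)<e$, and then has volume $\frac{4\pi}{3}(2(e-\phi(x)))^{3/2}=\frac{8\pi\sqrt{2}}{3}(e-\phi(x))^{3/2}$. Integrating over $x$ yields the explicit formula \fref{aphi}. Since $\phi$ is continuous on $\RR^3$ with $\phi(x)\to 0$ at infinity and $e<0$, the support $\{\phi<e\}$ of the integrand is a bounded subset of $\RR^3$, so the integral is finite. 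Moreover, when $e<\min\phi$ the integrand vanishes identically, giving $a_\phi(e)=0$.

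For part (ii), I first establish $\calC^1$ regularity on $(-\infty,0)$ by differentiation under the integral sign. Fix any compact subinterval $[e_1,e_2]\subset(-\infty,0)$: the integrand $(e-\phi(x))_+^{3/2}$ is uniformly bounded on this range by $(|e_2|+|\min\phi|)^{3/2}$ and supported in the bounded set $\{\phi<e_2\}$, so dominated convergence applies and gives
\be
\label{derivaphi}
a_\phi'(e)=4\pi\sqrt{2}\int_{\RR^3}(e-\phi(x))_+^{1/2}dx,
\ee
with the same argument showing continuity of $a_\phi'$ on $(-\infty,0)$ (including at $e=\min\phi$, where both sides vanish). Strict monotonicity on $(\min\phi,0)$ follows from $a_\phi'(e)>0$: by continuity of $\phi$, the set $\{\phi<e\}$ for $e>\min\phi$ is a nonempty open set, hence of positive Lebesgue measure.

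To obtain the surjectivity onto $\RR_+$, I need $a_\phi(e)\to+\infty$ as $e\to 0^-$. The monotone convergence theorem applied to \fref{aphi} gives
$$
\lim_{e\to 0^-}a_\phi(e)=\frac{8\pi\sqrt{2}}{3}\int_{\RR^3}(-\phi(x))^{3/2}dx.
$$
Here I use the defining condition $m(\phi)>0$ of the class $\calX$ in an essential way: from $(1+|x|)|\phi(x)|\geq m(\phi)$ one gets $(-\phi(x))^{3/2}\geq (m(\phi)/(1+|x|))^{3/2}$, which fails to be integrable at infinity in $\RR^3$. Hence $a_\phi(e)\to+\infty$, and combined with $a_\phi(\min\phi)=0$, the continuity, and the strict monotonicity, this shows $a_\phi$ is a continuous bijection of $[\min\phi,0)$ onto $\RR_+$, which is a $\calC^1$ diffeomorphism on the interior since $a_\phi'>0$ there.

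The computations involved are routine; the only genuine subtlety is the divergence at $e\to 0^-$, where one must use the Coulombic lower bound $|\phi(x)|\gtrsim m(\phi)/(1+|x|)$ rather than the finite energy assumption $\na\phi\in L^2$. This is precisely the reason the condition $m(\phi)>0$ is built into the definition of $\calX$, and it is what ultimately ensures that the energy shell parametrization $e\mapsto a_\phi(e)$ covers all of $\RR_+$.
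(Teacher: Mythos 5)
Your proposal is correct and follows essentially the same route as the paper: the Fubini/ball-volume computation for (i) is the paper's "spherical coordinates in velocity," the differentiation under the integral sign with the bounded support of $\{\phi<e\}$ is identical, and the divergence as $e\to 0^-$ is obtained exactly as in the paper from $m(\phi)>0$ together with the non-integrability of $(1+|x|)^{-3/2}$ on $\RR^3$ (the paper applies monotone convergence to the lower bound $\int(e+\frac{m(\phi)}{1+|x|})_+^{3/2}dx$ rather than to $a_\phi(e)$ itself, an immaterial difference).
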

\begin{proof}
Let us prove (i). We have the inclusion
$$\left\{(x,v)\in \RR^6:\, \frac{|v|^2}{2}+\phi(x)<e\right\}\subset \left\{(x,v)\in \RR^6:\,\phi(x)<e\mbox{ and }|v|^2\leq 2(e-\min \phi)\right\}.$$
Let $e<0$. Since $\phi$ is continuous and goes to zero at the infinity, the set in the right-hand side is bounded in $\RR^6$, thus $a_\phi(e)<+\infty$. The formula \fref{aphi} now follows after passing to the spherical coordinates in velocity. We now prove (ii). Since, for all $e<0$, the set $\left\{x\in \RR^3:\,\phi(x)<e\right\}$
is bounded, we may apply the dominated convergence theorem and get the continuity and differentiability of $a_\phi$ on $\RR_-^*$, with
\be
\label{aphiprime}
a'_\phi(e)=4\pi\sqrt{2} \int_{\RR^3}\left(e-\phi(x)\right)^{1/2}_+dx.
\ee
Hence $a'_\phi$ is nonnegative and clearly continuous. Moreover, if $a'_\phi(e)=0$ then $e-\phi(x)\leq 0$ for all $x\in \RR^3$, which means that $e\leq \min \phi$. Therefore, if $e>\min \phi$, then $a'_\phi(e)>0$. It remains to prove that $\ds \lim_{e\to 0-}a_\phi(e)=+\infty$. Since $\phi\in \calX$, we have
$$a_\phi(e)\geq C\int_{\RR^3}\left(e+\frac{m(\phi)}{1+|x|}\right)_+^{3/2}dx\to +\infty \ \ \ \mbox{as} \ \ e\to 0,$$
from $\int_{\RR^3}\frac{dx}{(1+|x|)^{3/2}}=+\infty$, $m(\phi)>0$ and the monotone convergence theorem. This concludes the proof of Lemma \ref{lemaphi}.
\end{proof}


\subsection{Rearrangement with respect to the microscopic energy}


We introduce in this section the generalized rearrangement of $f$ with respect to a Poisson field $\phi\in \calX$. Let us start with recalling standard properties of the Schwarz symmetrization, \cite{kavian,lieb-loss,Mossino}. 
\begin{lemma}[Schwarz symmetrization or radial rearrangement]
\label{schwarz}
Let $f\in L^1_+\cap L^\infty$, then the Schwarz symmetrization $f^*$ of $f$ is the unique nonincreasing function on $\RR_+$ such that $f$ and $f^*$ have the same distribution function:
$$\forall s\geq 0,\quad \mu_f(s)=\mu_{f^*}(s)$$
with $\mu_f$ given by \fref{muf} and $\mu_{f^*}$ defined analogously\footnote{through the one dimensional Lebesgue measure}. Equivalently, $f^*$ is the pseudo-inverse of $\mu_f$:
$$\forall t\geq 0,\quad f^*(t)=\inf\left\{s\geq 0:\,\mu_f(s)\leq t\right\}.$$
The following properties hold:\\
(i) $f^*\in L^1_+\cap L^{\infty}$ with $$f^*(0)=\|f\|_{L^{\infty}}, \ \ {\rm Supp}(f^*)\subset[0,{\rm meas}({\rm Supp}(f))];$$ 
(ii) for all $\beta\in \calC^1(\RR_+,\RR_+)$ with $\beta(0)=0$,
\be
\label{characterization2}
\int_{\RR_+}\beta(f^*(t))dt=\int_{\RR^6}\beta(f(x,v))dxdv.
\ee
\end{lemma}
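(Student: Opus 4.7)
The plan is to build $f^*$ directly from the formula $f^*(t)=\inf\{s\ge 0:\mu_f(s)\le t\}$, and then read all assertions off equimeasurability.

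First I will record the structural properties of $\mu_f$: since $f\in L^1_+\cap L^\infty$, the distribution function $\mu_f:[0,\infty)\to[0,\infty)$ is nonincreasing, right-continuous (by monotone convergence applied to the decreasing sequence of level sets), bounded by $s^{-1}\|f\|_{L^1}$ (Markov) so $\mu_f(s)\to 0$ as $s\to\infty$, and identically $\mu_f(s)=0$ for $s\ge \|f\|_{L^\infty}$. The candidate $f^*$ defined by the displayed infimum is then a nonincreasing, right-continuous function on $\RR_+$ with $f^*(t)=0$ for $t\ge \mu_f(0)$.

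The core step is the duality identity
\be
\label{dual}
f^*(t)>s\ \Longleftrightarrow\ t<\mu_f(s),\qquad \forall s,t\ge 0.
\ee
This is where the right-continuity of $\mu_f$ is used: if $t<\mu_f(s)$ then by definition $f^*(t)\ge s$ cannot force the inequality to be strict without a short argument using that the infimum is not attained at $s$; conversely, if $f^*(t)>s$, pick $s'\in(s,f^*(t))$, so $\mu_f(s')>t$ and monotonicity yields $\mu_f(s)>t$. Taking one-dimensional Lebesgue measure of the set in \fref{dual} gives $\mu_{f^*}(s)=\mu_f(s)$, which is the equimeasurability statement and makes $f^*$ a nonincreasing function with the same distribution function as $f$. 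Uniqueness is then a short argument: if $g$ is any nonincreasing function on $\RR_+$ with $\mu_g=\mu_f$, then applying the pseudo-inverse construction to $g$ (and using right-continuity, which one may always assume by modifying $g$ on a null set without changing $\mu_g$) returns $g$ itself and also returns $f^*$, so $g=f^*$ almost everywhere.

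Item (i) is then immediate. The bound $f^*\le\|f\|_{L^\infty}$ follows from $\mu_f(\|f\|_{L^\infty})=0\le t$ for every $t\ge 0$; conversely $f^*(0)=\inf\{s:\mu_f(s)\le 0\}=\|f\|_{L^\infty}$ because $\mu_f(s)>0$ for $s<\|f\|_{L^\infty}$. The support statement is exactly $f^*(t)=0$ for $t\ge \mu_f(0)=\mathrm{meas}(\mathrm{Supp}(f))$. For item (ii), write $\beta(u)=\int_0^u\beta'(\sigma)\,d\sigma$ using $\beta(0)=0$ and $\beta\in\calC^1$, then apply Fubini twice, once in $(x,v,\sigma)\in\RR^6\times\RR_+$ and once in $(t,\sigma)\in\RR_+\times\RR_+$, to obtain
\begin{equation*}
\int_{\RR^6}\beta(f)\,dxdv=\int_0^\infty\beta'(\sigma)\,\mu_f(\sigma)\,d\sigma=\int_0^\infty\beta'(\sigma)\,\mu_{f^*}(\sigma)\,d\sigma=\int_{\RR_+}\beta(f^*(t))\,dt,
\end{equation*}
using the equimeasurability established above.

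The only genuinely non-routine step is the duality \fref{dual}, i.e.\ ensuring that right-continuity of $\mu_f$ is correctly invoked so that neither inequality in \fref{dual} is off by a zero-measure boundary set; everything else is a formal consequence of it. The other pitfall to keep in mind is the uniqueness clause, where one must be careful to normalize any candidate $g$ to its right-continuous representative before identifying it with the pseudo-inverse of $\mu_f$.
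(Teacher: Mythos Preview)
The paper does not prove this lemma; it merely recalls the statement and cites standard references (Kavian, Lieb--Loss, Mossino). Your argument is the standard one and is correct: the duality \eqref{dual} gives equimeasurability, from which (i) and (ii) follow directly via the layer-cake/Fubini computation. One minor remark: your verbal description of the implication $t<\mu_f(s)\Rightarrow f^*(t)>s$ is a bit compressed---the clean way to say it is that $t<\mu_f(s)$ gives $s\notin A_t=\{s':\mu_f(s')\le t\}$, and right-continuity of $\mu_f$ then forbids $\inf A_t=s$ (else $s_n\downarrow s$ with $\mu_f(s_n)\le t$ would force $\mu_f(s)\le t$), so $f^*(t)=\inf A_t>s$. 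Also, in (ii) Fubini is justified because $\mu_f$ vanishes beyond $\|f\|_{L^\infty}$ and $\beta'$ is bounded on $[0,\|f\|_{L^\infty}]$; you may want to say this explicitly since $\beta$ is not assumed monotone.
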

\ni
Observe that the above definition of $f^*$ is equivalent to
$$\forall t\geq 0,\quad f^*(t)=\sup\left\{s\geq 0:\,\mu_f(s)> t\right\},$$
with the convention that $f^*(t)=0$ when the set $\{s\geq 0:\,\mu_f(s)> t\}$ is empty. Note also that if $f$ is continuous then $f^*$ is continuous \cite{Talenti}. In particular, $Q^*$ is continuous.

Given $\phi\in \calX$, we now define the rearrangement of $f$ with respect to the microscopic energy $\frac{|v|^2}{2}+\phi(x)$ as follows:

\begin{lemma}[Symmetric rearrangement with respect to a microscopic energy]
\label{proprearrangement}
Let $f\in \calE$ and let $\phi\in \calX$. Let $f^*$ be the Schwarz rearrangement in $\RR^6$ given by Lemma \pref{schwarz}. We define the function
\be
\label{fstarphi}
f^{*\phi}(x,v)=
\left\{\begin{array}{ll}\ds f^*\left(a_\phi\left(\frac{|v|^2}{2}+\phi(x)\right)\right)&\ds\mbox{ if }\quad \frac{|v|^2}{2}+\phi(x)<0\\
0&\ds\mbox{ if }\quad \frac{|v|^2}{2}+\phi(x)\geq 0\end{array}\right.
\ee
on $\RR^6$, where $a_\phi$ is defined by \fref{aphi}. Then:
\begin{itemize}
\item[(i)] $f^{*\phi}$ is equimeasurable with $f$, i.e. 
\be
\label{eqf}
f^{*\phi}\in \Eq(f)=\{g\in L^1_+\cap L^{\infty} \ \ \mbox{with} \ \ \mu_f=\mu_g\}.
\ee
\item[(ii)] $f^{*\phi}$ belongs to the energy space, i.e. $f^{*\phi}\in \calE$ with
\be
\label{estimate}
\int_{\RR^6}\frac{|v|^2}{2}f^{*\phi}dxdv\leq C\|\na\phi\|_{L^2}^{4/3}\|f\|_{L^1}^{7/9}\|f\|_{L^\infty}^{2/9}.
\ee
\end{itemize}
\end{lemma}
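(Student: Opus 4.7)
The plan is to handle (i) and (ii) separately, since they are logically independent. For (i), the key point is that the Jacobian $a_\phi$ has been defined precisely as the distribution function of the microscopic energy $e(x,v):=\frac{|v|^2}{2}+\phi(x)$ on the six-dimensional phase space, and is moreover a strictly increasing $\calC^1$ diffeomorphism of $[\min\phi,0)$ onto $\RR_+$ by Lemma \ref{lemaphi}. For (ii), the estimate will follow from the elementary observation that $f^{*\phi}$ vanishes as soon as $\frac{|v|^2}{2}\geq|\phi(x)|$, combined with standard interpolation and the Sobolev embedding $\dot H^1(\RR^3)\hookrightarrow L^6(\RR^3)$.

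For (i), I would compute $\mu_{f^{*\phi}}(s)$ directly for $s>0$. Since $f^{*\phi}(x,v)>s$ forces $e(x,v)<0$, and since $f^*$ is nonincreasing with $\{t\geq 0:f^*(t)>s\}=[0,\mu_f(s))$ up to a null set (by Lemma \ref{schwarz}), one has
$$\{(x,v)\,:\,f^{*\phi}(x,v)>s\}=\{(x,v)\,:\,a_\phi(e(x,v))<\mu_f(s)\}.$$
Using the monotonicity of $a_\phi$ and its very definition as the measure of the sublevel sets of $e$,
$$\mu_{f^{*\phi}}(s)=\mathrm{meas}\{(x,v)\,:\,e(x,v)<a_\phi^{-1}(\mu_f(s))\}=a_\phi\bigl(a_\phi^{-1}(\mu_f(s))\bigr)=\mu_f(s),$$
which proves \fref{eqf}. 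Combined with the pointwise bound $f^{*\phi}\leq\|f^*\|_{L^\infty}=\|f\|_{L^\infty}$, this also yields $f^{*\phi}\in L^1_+\cap L^\infty$ with the same $L^1$ and $L^\infty$ norms as $f$.

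For (ii), the inclusion $\mathrm{supp}\,f^{*\phi}\subset\{|v|^2/2\leq|\phi(x)|\}$ immediately gives
$$\int_{\RR^6}\frac{|v|^2}{2}f^{*\phi}\,dxdv\leq\int_{\RR^3}|\phi(x)|\rho_{f^{*\phi}}(x)\,dx.$$
The bound $f^{*\phi}\leq\|f\|_{L^\infty}$ together with the computation of the volume of the velocity support yields $\rho_{f^{*\phi}}(x)\leq C\|f\|_{L^\infty}|\phi(x)|^{3/2}$, hence $\|\rho_{f^{*\phi}}\|_{L^4}\leq C\|f\|_{L^\infty}\|\phi\|_{L^6}^{3/2}$. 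Combining with the conservation of mass $\|\rho_{f^{*\phi}}\|_{L^1}=\|f\|_{L^1}$ from (i) and interpolating at exponent $6/5$ with parameters $(7/9,2/9)$ (chosen so that $\tfrac{5}{6}=\tfrac{7}{9}+\tfrac14\cdot\tfrac{2}{9}$) gives
$$\|\rho_{f^{*\phi}}\|_{L^{6/5}}\leq C\|f\|_{L^1}^{7/9}\|f\|_{L^\infty}^{2/9}\|\phi\|_{L^6}^{1/3}.$$
A final application of H\"older with the conjugate pair $(6,6/5)$ and of the Sobolev embedding $\|\phi\|_{L^6}\lesssim\|\nabla\phi\|_{L^2}$ produces \fref{estimate}.

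I do not anticipate any serious obstacle: the Jacobian $a_\phi$ has been engineered precisely so that the change of variable $e\mapsto a_\phi(e)$ transports the six-dimensional Lebesgue measure of sublevel sets of $e$ to the one-dimensional Lebesgue measure on $\RR_+$, which renders (i) essentially tautological, and (ii) is a routine interpolation whose only mildly delicate step is tracking the exponents carefully so that the final bound is controlled by $\|\nabla\phi\|_{L^2}$ rather than some abstract $L^p$ norm of $\phi$.
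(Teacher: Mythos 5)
Your proof is correct, and both halves take a genuinely different route from the paper's. For (i), the paper first establishes the change-of-variables identity \fref{changeofvariable} and then deduces $\int_{\RR^6}\beta(f^{*\phi})=\int_0^{+\infty}\beta(f^*(s))\,ds=\int_{\RR^6}\beta(f)$ for every Casimir $\beta$, invoking the characterization \fref{characterization2}; you instead compute the distribution function directly, via the equivalence $f^*(t)>s\iff t<\mu_f(s)$ and the tautology $\mathrm{meas}\{e<e^*\}=a_\phi(e^*)$. Both arguments rest on the same fact, namely that $a_\phi$ pushes the phase-space measure of the energy sublevel sets forward to Lebesgue measure on $\RR_+$, but yours is shorter for the purpose of (i) alone, whereas the paper's formula \fref{changeofvariable} is reused several times afterwards (e.g.\ in the proofs of Propositions \ref{propclemonotonie} and \ref{theo-compacite-f}), so the paper gets more mileage out of isolating it here. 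For (ii), the paper writes $\int\frac{|v|^2}{2}f^{*\phi}\leq-\int\phi f^{*\phi}=\int\nabla\phi\cdot\nabla\phi_{f^{*\phi}}$, applies Cauchy--Schwarz and the kinetic interpolation inequality \fref{interpolation} to $f^{*\phi}$ itself, and then absorbs the resulting factor $\||v|^2f^{*\phi}\|_{L^1}^{1/4}$ into the left-hand side; you avoid both the duality with $\nabla\phi_{f^{*\phi}}$ and the absorption step by exploiting the velocity support $\{|v|^2\leq 2|\phi(x)|\}$ to get the pointwise bound $\rho_{f^{*\phi}}\leq C\|f\|_{L^\infty}|\phi|^{3/2}$, interpolating $\rho_{f^{*\phi}}$ between $L^1$ and $L^4$, and concluding by H\"older and Sobolev; your exponents reproduce \fref{estimate} exactly. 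The only step worth spelling out is that $\phi\in L^6(\RR^3)$ for $\phi\in\calX$ (which follows from $\nabla\phi\in L^2$ together with the decay of $\phi$ at infinity), but the paper needs essentially the same fact to justify the integration by parts $-\int\phi\rho_{f^{*\phi}}=\int\nabla\phi\cdot\nabla\phi_{f^{*\phi}}$, so this is not a gap specific to your argument.
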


\begin{proof}
Let us prove (i). The equimeasurability of $f$ and $f^{*\phi}$ relies on the following elementary change of variable formula: let two nonnegative function $\alpha\in \calC^0(\RR)\cap L^\infty(\RR)$ and $\gamma \in L^1(\RR_+)$, then
\bea
\int_{\frac{|v|^2}{2}+\phi(x)<0}\alpha\left(\frac{|v|^2}{2}+\phi(x)\right)\gamma\left(a_\phi\left(\frac{|v|^2}{2}+\phi(x)\right)\right)dxdv \qquad \qquad \qquad &&\nonumber\\
=\int_{\min \phi}^0 \alpha(e)\gamma(a_\phi(e))a_\phi'(e)de=\int_0^{+\infty} \alpha\left(a_\phi^{-1}(s)\right)\gamma(s)ds.&&\label{changeofvariable}
\eea
To obtain the first equality in \fref{changeofvariable}, we pass to the spherical coordinates in velocity $u=|v|$ and perform the change of variable $e=\frac{u^2}{2}+\phi(x)$ in the integral of $u$:
\bee
\int_{\frac{|v|^2}{2}+\phi(x)<0}\alpha\left(\frac{|v|^2}{2}+\phi(x)\right)\gamma\left(a_\phi\left(\frac{|v|^2}{2}+\phi(x)\right)\right)dxdv \qquad \qquad \qquad &&\\
=4\pi\sqrt{2}\int_{\RR^3}dx\int_{\phi(x)}^0 \alpha(e)\gamma(a_\phi(e))\left(e-\phi(x)\right)^{1/2}de.&&\\
=4\pi\sqrt{2}\int_{\min \phi}^0 \alpha(e)\gamma(a_\phi(e))de\int_{\RR^3}\left(e-\phi(x)\right)_+^{1/2}dx.&&
\eee
We conclude thanks to the formula \fref{aphiprime} of $a_\phi'$. The second equality comes after the change of variable $s=a_\phi(e)$. Recall from Lemma \ref{lemaphi} that $a_\phi$ is a $\calC^1$ diffeomorphism from $[\min \phi, 0)$ onto $\RR_+$.

Let $\beta\in \calC^1(\RR_+,\RR_+)$ such that $\beta(0)=0$. From \fref{changeofvariable} and the definition \fref{fstarphi}, we get
$$\int_{\RR^6}\beta\left(f^{*\phi}(x,v)\right)dxdv=\int_0^{+\infty}\beta(f^*(s))ds=\int_{\RR^6}\beta(f(x,v))dxdv,$$
where we use \fref{characterization2}. This proves that $f^{*\phi}\in \Eq(f)$.

Let us now prove (ii). From the equimeasurability of $f$ and $f^{*\phi}$, we already deduce that
\be
\label{equality}
\|f\|_{L^1}=\|f^{*\phi}\|_{L^1},\qquad \|f\|_{L^\infty}=\|f^{*\phi}\|_{L^\infty}.
\ee
Moreover, we have
\bee
\int_{\RR^6}\frac{|v|^2}{2}f^{*\phi}dxdv&=&\int_{\RR^6}\left(\frac{|v|^2}{2}+\phi(x)\right)f^{*\phi}dxdv-\int_{\RR^6}\phi(x)f^{*\phi}dxdv\\
&\leq &-\int_{\RR^6}\phi(x)f^{*\phi}dxdv\leq \|\phi\|_{L^{\infty}}\|f^*\|_{L^1}<+\infty,
\eee
where we used \fref{fstarphi}. More precisely:
\bee
\int_{\RR^6}\frac{|v|^2}{2}f^{*\phi}dxdv & \leq & -\int_{\RR^6}\phi(x)f^{*\phi}dxdv =\int_{\RR^3}\na\phi\cdot \na\phi_{f^{*\phi}}dx\\
&\leq &C\|\na\phi\|_{L^2}\||v|^2f^{*\phi}\|_{L^1}^{1/4}\|f^{*\phi}\|_{L^1}^{7/12}\|f^{*\phi}\|_{L^\infty}^{1/6}
\eee
where we used  the Cauchy-Schwarz inequality and the following standard interpolation inequality: for all $g\in \calE$,
\be
\label{interpolation}
\|\na\phi_g\|_{L^2}^2\leq C\||v|^2g\|_{L^1}^{1/2}\|g\|_{L^1}^{7/6}\|g\|_{L^\infty}^{1/3},
\ee
and \fref{estimate} follows. This concludes the proof of Lemma \ref{proprearrangement}.
\end{proof}

We end this subsection with an elementary lemma which will be useful in the sequel.

\begin{lemma}[Pseudo inverse of $f^*\circ a_{\phi}$]
\label{lemmapseudoinverse}
Let $f\in\calE$, nonzero, and $\phi\in \calX$. We define the pseudo inverse of $f^{*}\circ a_{\phi}$ for $s\in (0,\|f\|_{L^\infty})$ as:
\be
\label{defpseduoinverse}
(f^*\circ a_{\phi})^{-1}(s)=\sup\{e\in [\min \phi,0): \ \ f^*\circ a_{\phi}(e)>s\}.
\ee
Then $(f^*\circ a_{\phi})^{-1}$ is a nonincreasing function from $(0,\|f\|_{L^\infty})$ to $[\min \phi,0)$ and  for all $(x,v)\in \RR^6$ and $ s\in(0,\|f\|_{L^{\infty}})$,
\be
\label{implicationone}
 f^{*\phi}(x,v)>s \Longrightarrow \frac{|v|^2}{2}+\phi (x)\leq (f^*\circ a_{\phi})^{-1}(s),
\ee
\be
\label{implicationtwo}
  f^{*\phi}(x,v)\leq s \Longrightarrow \frac{|v|^2}{2}+\phi (x)\geq (f^*\circ a_{\phi})^{-1}(s),
 \ee
 where $f^{*\phi}$ is defined by \fref{fstarphi}.
\end{lemma}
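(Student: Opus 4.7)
\emph{Plan of proof.} The idea is to set $g := f^*\circ a_\phi$ and reduce the lemma to elementary properties of the pseudo-inverse of a nonincreasing real function. By Lemma \ref{lemaphi}, $a_\phi$ is a strictly increasing $\mathcal{C}^1$ diffeomorphism from $[\min\phi,0)$ onto $\RR_+$, while by Lemma \ref{schwarz}, $f^*$ is nonincreasing on $\RR_+$ with $f^*(0) = \|f\|_{L^\infty}$. Hence $g$ is nonincreasing on $[\min\phi,0)$, with boundary values $g(\min\phi) = \|f\|_{L^\infty}$ and $g(e) \to 0$ as $e\to 0^-$; the latter uses $a_\phi(e)\to +\infty$ as $e\to 0^-$ together with the fact that $f\in L^1$ forces $f^*(t)\to 0$ at infinity (by Chebyshev).

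Now for $s \in (0,\|f\|_{L^\infty})$ I would set $A_s := \{e \in [\min\phi,0) : g(e) > s\}$. Since $g(\min\phi) = \|f\|_{L^\infty} > s$ we have $A_s \neq \emptyset$, and since $g(e)\to 0$ we have $g(e) < s$ for $e$ sufficiently close to $0^-$, hence $\sup A_s < 0$. Therefore $(f^*\circ a_\phi)^{-1}(s) = \sup A_s \in [\min\phi,0)$, as required, and monotonicity in $s$ is immediate from the inclusion $A_{s_2}\subset A_{s_1}$ for $s_1 < s_2$.

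To establish the two implications I argue directly from these definitions. If $f^{*\phi}(x,v) > s > 0$, then \fref{fstarphi} forces $e(x,v) := \frac{|v|^2}{2}+\phi(x) < 0$ and $g(e(x,v)) > s$, so $e(x,v) \in A_s$ and hence $e(x,v) \leq (f^*\circ a_\phi)^{-1}(s)$, which is \fref{implicationone}. For \fref{implicationtwo}, assume $f^{*\phi}(x,v) \leq s$. The case $e(x,v) \geq 0$ is trivial since $(f^*\circ a_\phi)^{-1}(s) < 0$; otherwise $g(e(x,v)) \leq s$, and a strict inequality $e(x,v) < (f^*\circ a_\phi)^{-1}(s) = \sup A_s$ would produce some $e' \in A_s$ with $e' > e(x,v)$, whence $g(e(x,v)) \geq g(e') > s$ by monotonicity of $g$, a contradiction. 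No genuine obstacle arises here: this is a bookkeeping lemma comparing level sets of $f^{*\phi}$ with those of the microscopic energy $\frac{|v|^2}{2}+\phi(x)$, and the only care required is the standard one in tracking strict versus non-strict inequalities around the pseudo-inverse.
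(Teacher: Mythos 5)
Your proof is correct and follows essentially the same route as the paper's: nonemptiness of the level set $A_s$ and negativity of its supremum from $f^*(0)=\|f\|_{L^\infty}$, $a_\phi(e)\to+\infty$ and $f^*(t)\to 0$, monotonicity from nesting of the sets, and the two implications read off directly from the definition of the sup (your contradiction argument for \fref{implicationtwo} is just the contrapositive of the paper's direct one). No gaps.
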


\begin{proof} Let $s\in(0,\|f\|_{L^\infty})$, then from $f^*(0)=\|f\|_{L^{\infty}}$, $f^*(t)\to 0$ as $t\to +\infty$ and Lemma \ref{lemaphi},
\be
\label{cohoeh}
\{e\in [\min \phi,0): \ \ f^*\circ a_{\phi}(e)>s\} \ \ \mbox{is not empty}
\ee
and $(f^*\circ a_{\phi})^{-1}(s)$ defined by \fref{defpseduoinverse} is strictly negative.
The monotonicity of $(f^*\circ a_{\phi})^{-1}$ follows from the monotonicity of $f^*$ and $a_{\phi}$. Assume that $f^{*\phi}(x,v)> s$, then from the definition \fref{fstarphi}, we have $\min \phi\leq  \frac{|v|^2}{2}+\phi (x)<0$. We also have $f^*\circ a_{\phi}(\frac{|v|^2}{2}+\phi (x))> s$, therefore  $\frac{|v|^2}{2}+\phi (x)\leq (f^*\circ a_{\phi})^{-1}(s)$ from the definition (\ref{defpseduoinverse}). This proves \fref{implicationone}.
Assume now that $f^{*\phi}(x,v)\leq s$. Then, for all $e\in\{\tilde e\in [\min \phi,0): \ \ f^*\circ a_{\phi}(\tilde e)>s\}$ which is a non empty set, we have $\frac{|v|^2}{2}+\phi (x)> e$, and (\ref{implicationtwo}) is proved.
\end{proof}

\subsection{Spherical models are fixed points of the generalized rearrangement}

We now reinterpret the assumptions on $Q$ in Theorem \ref{thm} and claim that spherical models are fixed points of the $f\to f^{*\phi_{f}}$ transformation\footnote{Note that this is essentially a characterization of spherical models}.
\begin{lemma}[$Q$ is a fixed point of the $f^{*\phi_f}$ rearrangement]
\label{id-Q}
Let $Q$ be a radially symmetric spherical models as in the assumptions of Theorem \pref{thm}. Then we have
\be
\label{Qfixe}
 F(e)=Q^*\circ a_{\phi_{Q}}(e), \quad \forall e\in [\phi_{Q}(0),0), \qquad  \mbox{and} \qquad Q^{*\phi_{Q}}=Q \quad on \  \RR^6.
\ee
\end{lemma}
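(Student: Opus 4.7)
The plan is to compute the Schwarz rearrangement $Q^*$ explicitly and to recognize it as $F \circ a_{\phi_Q}^{-1}$, after which the equality $Q^{*\phi_Q}=Q$ will follow from the very definition \fref{fstarphi}.

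First I would record the geometry of $Q$. Since $Q$ is spherically symmetric and nonzero, $\rho_Q$ is radial and nonnegative, so $\phi_Q$ is a radial subharmonic function tending to $0$ at infinity; hence $\phi_Q$ is nondecreasing in $|x|$ and $\min \phi_Q = \phi_Q(0)$. Moreover $e(x,v)=\frac{|v|^2}{2}+\phi_Q(x) \geq \phi_Q(0)$, and $Q$ being nonzero forces $\phi_Q(0)<e_0$ (otherwise $F \circ e \equiv 0$). By \fref{decroissance} and continuity, $F$ is strictly decreasing on $(-\infty,e_0)$ with $F(e_0)=0$, so $F$ realizes a continuous bijection from $[\phi_Q(0),e_0]$ onto $[0,F(\phi_Q(0))]$.

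Next I would compute the distribution function $\mu_Q$. For $s\in(0,F(\phi_Q(0)))$, the set $\{F>s\}$ is precisely $(-\infty,F^{-1}(s))$ with $F^{-1}(s)\in(\phi_Q(0),e_0)$, hence
\[
\mu_Q(s) = \mathrm{meas}\{(x,v)\in\RR^6: e(x,v)<F^{-1}(s)\} = a_{\phi_Q}(F^{-1}(s)),
\]
while $\mu_Q(s)=0$ for $s\geq F(\phi_Q(0))$. Since $a_{\phi_Q}$ is a strictly increasing $\calC^1$ diffeomorphism from $[\phi_Q(0),0)$ onto $\RR_+$ by Lemma \ref{lemaphi}, I can invert the relation $t=a_{\phi_Q}(F^{-1}(s))$ and read off the pseudo-inverse defining $Q^*$: for $t\in[0,a_{\phi_Q}(e_0))$ one gets $Q^*(t) = F(a_{\phi_Q}^{-1}(t))$, and $Q^*(t)=0$ for $t\geq a_{\phi_Q}(e_0)$. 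Equivalently,
\[
Q^*\circ a_{\phi_Q}(e) = F(e), \qquad \forall\, e\in[\phi_Q(0),e_0),
\]
which extends trivially to $e\in[e_0,0)$ since both sides vanish there (by definition of $e_0$ and because $a_{\phi_Q}(e)\geq a_{\phi_Q}(e_0)$).

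Finally I would combine this identity with the formula \fref{fstarphi} defining $Q^{*\phi_Q}$. When $e(x,v)<0$, $e(x,v)\in[\phi_Q(0),0)$ and the previous step gives $Q^{*\phi_Q}(x,v) = Q^*(a_{\phi_Q}(e(x,v))) = F(e(x,v)) = Q(x,v)$. When $e(x,v)\geq 0$, $Q^{*\phi_Q}(x,v)=0$ by \fref{fstarphi} and $Q(x,v) = F(e(x,v)) = 0$ because $e(x,v)>e_0$. This yields $Q^{*\phi_Q}=Q$ pointwise on $\RR^6$. The only potential subtlety is handling the boundary cases $e=\phi_Q(0)$ and $e=e_0$ and ensuring $Q^*$ is well-defined via the infimum formulation; apart from these bookkeeping issues, the argument is a direct chain of changes of variable.
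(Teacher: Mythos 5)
Your proposal is correct and follows essentially the same route as the paper: both compute the distribution function $\mu_Q$ in terms of $a_{\phi_Q}$ via the strict monotonicity of $F$ (you write $\mu_Q(s)=a_{\phi_Q}(F^{-1}(s))$, the paper equivalently writes $\mu_Q(F(e))=a_{\phi_Q}(e)$), invert to identify $Q^*\circ a_{\phi_Q}=F$ on $[\phi_Q(0),e_0]$, extend trivially past $e_0$, and then read off $Q^{*\phi_Q}=Q$ from the definition \fref{fstarphi}. The boundary bookkeeping you flag does go through exactly as you indicate, and your observation that $\phi_Q(0)<e_0$ because otherwise $Q\equiv 0$ is a slightly more direct justification than the paper's (which deduces it from $a_{\phi_Q}(e_0)=\mbox{meas}(\mbox{Supp}(Q))>0$).
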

\begin{proof}
Observe first that, since the boundary of $\{Q(x,v)>0\}$ is the level set $\frac{|v|^2}{2}+\phi_Q(x)=e_0$, we have $\mu_Q(0)=\mbox{meas (Supp}(Q))$. From the equimeasurability of $Q$ and $Q^*$, we have 
\bee
\mu_Q(F(e))&=&\mbox{meas}\left\{(x,v)\in \RR^6,\  F\left(\frac{|v|^2}{2}+\phi_Q(x)\right) > F(e)\right\}\\
&=&  \mbox{meas}\{s\in \RR^*_{+}, \ Q^*(s) > F(e)\},
\eee
for all $e\leq e_0$.  Since $F$ is strictly decreasing on $(-\infty, e_{0}]$, this is equivalent to
\be
\label{aq1} \mu_{Q}(F(e))=a_{\phi_{Q}}(e) =  \mbox{meas}\left\{s\in \RR^*_{+}, \ Q^*(s) > F(e)\right\},  \qquad \forall e\leq  e_{0}.
\ee
In particular $a_{\phi_{Q}}(e_{0})=\mbox{meas}(\mbox{Supp}(Q)) >0$, which implies that $\phi_{Q}(0) < e_{0}$. 
From \fref{aq1} and the invertibility of both continuous functions $F$ and $a_{\phi_Q}$ on $[\phi_{Q}(0), e_{0}]$, we deduce that $\mu_{Q}$ is continuous and one-to-one from $[0,F(\phi_{Q}(0))]$ to $[0,a_{\phi_{Q}}(e_{0})]$. In particular, $Q^*$ is the inverse of $\mu_Q$ on this interval (and not only its pseudo-inverse) and we have
\be
\label{aq2} Q^*\circ a_{\phi_{Q}}(e) =  F(e), \qquad \forall e \in [\phi_{Q}(0), e_{0}].
\ee
Identity \fref{aq2} is still valid for $e_{0}<e<0$. Indeed, in this case, we have $F(e)=0$,  and
$a_{\phi_{Q}}(e) > a_{\phi_{Q}}(e_{0})= \mbox{meas}(\mbox{Supp}(Q)) $,  which implies that $Q^*\circ a_{\phi_{Q}}(e) =0$.
The first identity of \fref{Qfixe} is then proved.

Now, the identity $Q^{*\phi_{Q}}=Q$ is a straightforward consequence of the first identity of  \fref{Qfixe}. Indeed, we first observe that $\frac{|v|^2}{2}+\phi_Q(x) \geq \phi_{Q}(0)$. If $\frac{|v|^2}{2}+\phi_Q(x) \geq 0$ then
$F\left(\frac{|v|^2}{2}+\phi_Q(x) \right) =0$ and $Q^{*\phi_{Q}}(x,v)=0$ from the definitions of $F$ and $Q^{*\phi_{Q}}$. If $\frac{|v|^2}{2}+\phi_Q(x) < 0$, then we apply the first identity to $e=\frac{|v|^2}{2}+\phi_Q(x)$  and get the desired equality. The proof of Lemma \ref{id-Q} is complete.
\end{proof}


\subsection{Monotonicity of the Hamiltonian under symmetric rearrangement}


We are now in position to derive the monotonicity of the Hamiltonian under the generalized rearrangement which is the first key to our analysis and was already observed in the physics litterature, see \cite{aly} and references therein. Given $f\in\calE\setminus\{0\}$, by Lemma \ref{lemX} we have $\phi_f\in \X$ and we will note to ease notation:
\be
\label{deffhat}
\widehat{f}=f^{*\phi_f}.
\ee
Given $\phi\in \calX$, we define the functional 
\be
\label{defiphi}
 \mathcal J _{f^*}(\phi) =\calH(f^{*\phi})+\frac{1}{2}\|\na \phi-\na \phi_{f^{*\phi}}\|^2_{L^2}
\ee
which is well defined from Proposition \ref{proprearrangement}. We claim:

\begin{proposition}[Monotonicity of the Hamiltonian under the $f^{*\phi_f}$ rearrangement]
\label{propclemonotonie}
Let $f\in\calE\setminus \{0\}$ and $\widehat{f}$ given by \fref{deffhat}, then:
\be
\label{keymononicity}
\calH(f)\geq  \mathcal J _{f^*}(\phi_f)\geq \calH(\widehat{f}).
\ee
Moreover, $\calH(f)= \calH(\widehat{f})$ if and only if $f=\widehat{f}$.
\end{proposition}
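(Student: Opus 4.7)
The strategy is to compare $\calH(f)$ and $\mathcal J_{f^*}(\phi_f)$ via a rearrangement (bathtub-type) inequality for the microscopic energy, and to observe that the inequality $\mathcal J_{f^*}(\phi_f)\geq \calH(\widehat f)$ is built into the definition of $\mathcal J_{f^*}$.

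\medskip

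\textbf{Step 1: An algebraic identity.} The first step is to rewrite both $\calH(f)$ and $\mathcal J_{f^*}(\phi_f)$ so that the microscopic energy $e_f(x,v):=\tfrac{|v|^2}{2}+\phi_f(x)$ appears linearly. Using $\Delta\phi_g=\rho_g$ and integrating by parts,
\begin{equation*}
\int_{\RR^3}\nabla\phi_f\cdot\nabla\phi_{\widehat f}\,dx=-\int_{\RR^6}\phi_f\,\widehat f\,dx\,dv,\qquad \int_{\RR^6}\phi_f\,f\,dx\,dv=-\|\nabla\phi_f\|_{L^2}^2.
\end{equation*}
Expanding the Dirichlet term in the definition \fref{defiphi} of $\mathcal J_{f^*}$, these identities yield the symmetric representation
\begin{equation*}
\calH(f)=\int_{\RR^6}e_f\,f\,dx\,dv+\tfrac12\|\nabla\phi_f\|_{L^2}^2,\qquad \mathcal J_{f^*}(\phi_f)=\int_{\RR^6}e_f\,\widehat f\,dx\,dv+\tfrac12\|\nabla\phi_f\|_{L^2}^2,
\end{equation*}
so that subtracting cancels the Dirichlet term entirely and gives
\begin{equation*}
\calH(f)-\mathcal J_{f^*}(\phi_f)=\int_{\RR^6}e_f(x,v)\bigl(f(x,v)-\widehat f(x,v)\bigr)\,dx\,dv.
\end{equation*}

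\medskip

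\textbf{Step 2: Bathtub comparison.} The heart of the proof is to show this quantity is nonnegative. By Lemma \ref{proprearrangement}, $\widehat f\in \Eq(f)$, so $|\{f>s\}|=|\{\widehat f>s\}|=\mu_f(s)$ for every $s$. Using the layer-cake formula and the fact that both indicator functions integrate to the same value, the constant $\tau(s):=(f^*\circ a_{\phi_f})^{-1}(s)$ from Lemma \ref{lemmapseudoinverse} can be freely subtracted from $e_f$ inside the integral:
\begin{equation*}
\calH(f)-\mathcal J_{f^*}(\phi_f)=\int_0^{\|f\|_{L^\infty}}\!\int_{\RR^6}\bigl(e_f(x,v)-\tau(s)\bigr)\bigl(\mathbbm{1}_{\{f>s\}}-\mathbbm{1}_{\{\widehat f>s\}}\bigr)\,dx\,dv\,ds.
\end{equation*}
The implications \fref{implicationone}--\fref{implicationtwo} exactly express that $e_f\leq\tau(s)$ on $\{\widehat f>s\}$ and $e_f\geq\tau(s)$ on $\{\widehat f\leq s\}$. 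Checking the four cells of the partition generated by $\{f>s\}$ and $\{\widehat f>s\}$, the inner integrand is seen to be pointwise nonnegative, which gives $\calH(f)\geq \mathcal J_{f^*}(\phi_f)$. The second inequality $\mathcal J_{f^*}(\phi_f)\geq\calH(\widehat f)$ is tautological from \fref{defiphi} since $\tfrac12\|\nabla\phi_f-\nabla\phi_{\widehat f}\|_{L^2}^2\geq 0$.

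\medskip

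\textbf{Step 3: Equality case.} If $\calH(f)=\calH(\widehat f)$, both inequalities in \fref{keymononicity} must be equalities. Equality in Step 2 forces the pointwise nonnegative integrand to vanish a.e.\ in $(x,v,s)$. Since Lemma \ref{lemaphi} asserts that $a_{\phi_f}$ is a $\calC^1$ diffeomorphism from $[\min\phi_f,0)$ onto $\RR_+$, the level sets $\{e_f=\tau(s)\}$ have Lebesgue measure zero, and so vanishing forces $\mathbbm{1}_{\{f>s\}}=\mathbbm{1}_{\{\widehat f>s\}}$ a.e.\ for a.e.\ $s\in(0,\|f\|_{L^\infty})$, whence $f=\widehat f$ a.e.

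\medskip

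\textbf{Main obstacle.} The main technical delicacy lies in Step 2: since $e_f$ is unbounded from above, the layer-cake Fubini exchange must be justified by checking that $e_f f$ and $e_f\widehat f$ belong to $L^1(\RR^6)$, which follows from $f,\widehat f\in\calE$ together with $\phi_f\in L^\infty(\RR^3)$ (Lemma \ref{lemX}) and the kinetic energy bound \fref{estimate} on $\widehat f$. The subtraction of the level-dependent constant $\tau(s)$ is precisely the device that converts the global optimality of $\widehat f$ as minimizer of $g\mapsto\int e_f g$ over $g\in\Eq(f)$ into a pointwise-at-each-level sign; tracking its behaviour as $s\to 0^+$ or $s\to\|f\|_{L^\infty}^-$ is handled by Lemma \ref{lemmapseudoinverse}, which guarantees $\tau(s)\in[\min\phi_f,0)$ throughout.
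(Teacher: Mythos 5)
Your proof is correct and follows essentially the same route as the paper's: the same algebraic identity reducing $\calH(f)-\mathcal J_{f^*}(\phi_f)$ to $\int e_f(f-\widehat f)\,dxdv$, the same layer-cake decomposition, and the same use of the pseudo-inverse $\tau(s)=(f^*\circ a_{\phi_f})^{-1}(s)$ through \fref{implicationone}--\fref{implicationtwo}. The only cosmetic difference is in the equality case, where you invoke the Lebesgue-nullity of the level sets $\{e_f=\tau(s)\}$ (which indeed follows from the continuity of $a_{\phi_f}$ in Lemma \ref{lemaphi}), whereas the paper derives a contradiction from $\widehat f$ being a nonincreasing function of $e_f$; both arguments are valid.
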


\begin{proof} First compute for all $(f,g)\in \calE$:
\bea
\label{identity}
\nonumber \calH(f)& = & \frac{1}{2}\int_{\RR^6}|v|^2f-\frac{1}{2}\int_{\RR^3}|\nabla\phi_f|^2\\
\nonumber & = & \int_{\RR^6}\left(\frac{|v|^2}{2}+\phi_f\right)(f-g)+\frac{1}{2}\int_{\RR^6}|v|^2 g+\int_{\RR^3}\phi_fg+\frac{1}{2}\int|\nabla\phi_f|^2\\
& = & \calH(g)+\frac{1}{2}\|\na\phi_f-\na\phi_{g}\|_{L^2}^2+\int_{\RR^6}\left(\frac{|v|^2}{2}+\phi_f(x)\right)(f-g).
\eea
Replacing $g$ by $\widehat f=f^{*\phi_f}$ yields from \fref{defiphi}:
\be
\label{identitypartic}
\calH(f)=\mathcal J _{f^*}(\phi_f)+\int_{\RR^6}\left(\frac{|v|^2}{2}+\phi_f(x)\right)(f-f^{*\phi_f})\,dxdv,
\ee
and hence \fref{propclemonotonie} follows from 
\be
\label{identity2}
 \int_{\RR^6}\left(\frac{|v|^2}{2}+\phi_f(x)\right)(f-\widehat{f})\,dxdv\geq 0,
\ee
with equality if  and only if $f=\widehat{f}$. 
The proof of (\ref{identity2}) is reminiscent from the standard inequality for symmetric rearrangement, see \cite{lieb-loss}: $$\int_{\RR^6}|x|f^*\leq\int_{\RR^6}|x|f.$$ 
Indeed, use the layer cake representation $$f(x,v)=\int_{t=0}^{\|f\|_{L^{\infty}}}\un_{t<f(x,v)} dt$$ and Fubini to derive:
\bea
\label{mesureurto}
\nonumber & & \hspace*{-1cm}\int_{\RR^6}\left(\frac{|v|^2}{2}+\phi_f\right)(f-\widehat f)\,dxdv\\
\nonumber &&=   \int_ {t=0}^{\|f\|_{L^{\infty}}}dt\int_{\RR^6}\left(\un_{t<f(x,v)} -\un_{t<\widehat f(x,v)}\right)\left(\frac{|v|^2}{2}+\phi_f\right)dxdv\\ 
\nonumber && = \int_{t=0}^{\|f\|_{L^{\infty}}}dt\int_{\RR^6}\left(\un_{\widehat f(x,v)\leq t<f(x,v)}-\un_{f(x,v)\leq t<\widehat f(x,v)}\right)\left(\frac{|v|^2}{2}+\phi_f\right)dxdv\\
& & = \int_{t=0}^{\|f\|_{L^{\infty}}}dt\left(\int_{S_1(t)}\left(\frac{|v|^2}{2}+\phi_f\right)dxdv-\int_{S_2(t)}\left(\frac{|v|^2}{2}+\phi_f\right)dxdv\right)
\eea
with $$S_1(t)=\{\widehat f(x,v)\leq t<f(x,v)\},\qquad S_2(t)=\{f(x,v)\leq t<\widehat f(x,v)\}.$$
Observe from $\widehat{f}\in \Eq(f)$ that: 
\be
\label{identitymeasures}
\mbox{for a.e.} \ \ t>0, \qquad \mbox{meas}(S_1(t))=\mbox{meas}(S_2(t)).
\ee 
We thus conclude from \fref{implicationone} and (\ref{identitymeasures}): $\forall t\in (0,\|f\|_{L^{\infty}})$,
$$\int_{S_2(t)}\left(\frac{|v|^2}{2}+\phi_f(x)\right)dxdv\leq \mbox{meas}(S_2(t))(f^{*}\circ a_{\phi_f})^{-1}(t)=\int_{S_1(t)} (f^{*}\circ a_{\phi_f})^{-1}(t)dxdv.$$ 
Injecting this into (\ref{mesureurto}) together with (\ref{implicationtwo}) yields:
\bee 
&&\hspace*{-1cm}\int_{\RR^6}\left(\frac{|v|^2}{2}+\phi_f\right)(f-\widehat f)\,dxdv \geq \\
&&\qquad  \int_0^{\|f\|_{L^{\infty}}}dt \int_{S_1(t)}\left(\frac{|v|^2}{2}+\phi_f(x)-(f^{*}\circ a_{\phi_f})^{-1}(t)\right)dxdv\geq 0
\eee
and \fref{identity2} is proved. We also have the analogous inequality for $S_2(t)$:
\bee 
&&\hspace*{-1cm}\int_{\RR^6}\left(\frac{|v|^2}{2}+\phi_f\right)(f-\widehat f)\,dxdv \geq \\
&&\qquad  \int_0^{\|f\|_{L^{\infty}}}dt \int_{S_2(t)}\left((f^{*}\circ a_{\phi_f})^{-1}(t)-\frac{|v|^2}{2}-\phi_f(x)\right)dxdv\geq 0.
\eee
 Let us now study the case of equality in \fref{identity2}. If $$ \int_{\RR^6}\left(\frac{|v|^2}{2}+\phi_f(x)\right)(f-\widehat{f})\,dxdv=0,$$ 
the above chain of equalities implies that for a.e $t>0$, either $\mbox{meas}(S_1(t))=\mbox{meas}(S_2(t))=0$ or, a.e. $ (x_1,v_1)\in S_1(t)$,  a.e $(x_2,v_2)\in S_2(t)$,
$$\frac{|v_1|^2}{2}+\phi_f(x_1)=(f^{*}\circ a_{\phi_f})^{-1}(t)=\frac{|v_2|^2}{2}+\phi_f(x_2).$$
The last assertion contradicts the fact that $\widehat f(x_1,v_1)\leq t<\widehat f(x_2,v_2)$. Therefore, for a.e $t\in(0,\|f\|_{L^{\infty}})$, we have $\mbox{meas}(S_1(t))=\mbox{meas}(S_2(t))=0$. On the other hand, $\|f\|_{L^{\infty}}=\|f^*\|_{L^{\infty}}$ and hence $\mbox{meas}(S_1(t))=\mbox{meas}(S_2(t))=0$ for $t>\|f\|_{L^{\infty}}$. Hence $\mbox{meas}(S_1(t))=\mbox{meas}(S_2(t))=0$ for a.e. $t>0$, which implies $f=\widehat f$. This concludes the proof of Proposition \ref{propclemonotonie}.
\end{proof}


\section{Study of the reduced functional $\mathcal J$}
\label{sectionJ}

In this section, we focus onto the functional on $\calX$:
\be
\label{J}
 \mathcal J(\phi) =\mathcal J_{Q^*}(\phi)=\calH(Q^{*\phi})+\frac{1}{2}\|\na \phi-\na \phi_{Q^{*\phi}}\|^2_{L^2}
\ee
We claim that locally near $\phi_Q$, $\mathcal J(\phi)-\mathcal J(\phi_Q)$ is equivalent to the distance of $\phi$ to the manifold of translated Poisson fields $\phi_Q(\cdot+x)$, $x\in \RR^3$.

\begin{proposition}[Coercive behavior of $\mathcal J$ near $\phi_Q$]
\label{propJ}
There exist universal constants $c_0,\delta_0>0$ and a continuous map $\phi\to z_{\phi}$ from $(\dot{H}^1,\|\cdot\|_{\dot{H}^1})\rightarrow \RR^3$ such that the following holds true. Let $\phi\in \X$ with 
\be
\label{linfitysmallnes}
\inf_{z\in \RR^3}\left(\|\phi-\phi_Q(\cdot-z)\|_{L^{\infty}}+\|\nabla\phi-\nabla\phi_Q(\cdot-z)\|_{L^{2}}\right)<\delta_0,
\ee
then:
\be
\label{coercivityloc}
\mathcal J(\phi)-\mathcal J(\phi_Q)\geq c_0\|\nabla \phi-\nabla \phi_Q(\cdot-z_{\phi})\|_{L^2}^2.
\ee
\end{proposition}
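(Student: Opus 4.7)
The plan is to modulate out the translation invariance, Taylor-expand $\mathcal J$ to second order around $\phi_Q$ using the fixed-point identity $Q=Q^{*\phi_Q}$ of Lemma~\ref{id-Q} to kill the linear term, and then establish coercivity of the Hessian modulo the translation directions.

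First I would construct the continuous map $\phi\mapsto z_\phi$ by the implicit function theorem: for $\phi$ close to the orbit $\{\phi_Q(\cdot-z)\}_{z\in\RR^3}$ in $\dot H^1$, impose the three orthogonality conditions
$$\int_{\RR^3}\nabla\bigl(\phi-\phi_Q(\cdot-z_\phi)\bigr)\cdot\nabla\partial_{x_i}\phi_Q(\cdot-z_\phi)\,dx=0,\qquad i=1,2,3.$$
The linearization at $(\phi,z)=(\phi_Q,0)$ is the Gram matrix $\bigl(\int\nabla\partial_i\phi_Q\cdot\nabla\partial_j\phi_Q\bigr)_{ij}$, which is a positive multiple of the identity by spherical symmetry, hence invertible. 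The IFT supplies a $\dot H^1$-continuous map $z_\phi$ on a $\dot H^1$-neighborhood of the orbit. By translation invariance of $\mathcal J$ I then reduce to $z_\phi=0$ and write $\phi=\phi_Q+h$ with $h\in\dot H^1\cap L^\infty$ small and orthogonal to the translation modes.

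Next I expand $\mathcal J(\phi_Q+h)$ to second order. Differentiating \fref{J} in $\phi$ and invoking Lemma~\ref{id-Q} cancels the linear contribution, and a direct computation (controlling the variations of $a_\phi$ and $Q^{*\phi}$) yields
$$\mathcal J(\phi_Q+h)-\mathcal J(\phi_Q)=\tfrac12\int_{\RR^3}|\nabla h|^2\,dx-\tfrac12\int_{\RR^6}|F'(e)|\,(h-\Pi h)^2\,dx\,dv+R(h),$$
where $e=\frac{|v|^2}{2}+\phi_Q(x)$, $\Pi$ is the orthogonal projector in $L^2(|F'(e)|\,dx\,dv)$ onto functions depending only on $e$, and $R(h)=o(\|\nabla h\|_{L^2}^2)$ as $\|h\|_{L^\infty}+\|\nabla h\|_{L^2}\to 0$. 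The projector arises because only the $\Pi$-average of $h$ enters the first-order variation of $Q^{*\phi}$ through the chain rule on the Jacobian $a_\phi$.

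The heart of the argument, and the step I expect to be the main obstacle, is the generalized Antonov coercivity
$$\mathcal Q(h):=\int_{\RR^3}|\nabla h|^2\,dx-\int_{\RR^6}|F'(e)|\,(h-\Pi h)^2\,dx\,dv\geq c_1\|\nabla h\|_{L^2}^2$$
for all $h\in\dot H^1$ orthogonal to the translation modes $\partial_i\phi_Q$, which saturate $\mathcal Q=0$ and exhaust the kernel. Adapting H\"ormander's scheme for sharp weighted $L^2$-Poincar\'e inequalities as announced in the introduction, I would represent $h-\Pi h$ as a divergence along the Hamiltonian flow of $e$ on phase space and dualize against a well-chosen test vector field; the bound then reduces to a uniform convexity estimate $\nabla^2_{(x,v)}e\gtrsim 1$ on the directions transverse to the level sets of $e$, which must be extracted from the nonlinear steady-state equation $Q=Q^{*\phi_Q}$ combined with $F'<0$ and the Poisson structure. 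This convexity replaces Antonov's classical argument, which was sufficient in the radial anisotropic setting treated in \cite{LMR8}, and is the new technical ingredient required here. Once the coercivity is in hand, taking $\delta_0$ small enough to absorb $R(h)$ into half of the Hessian yields \fref{coercivityloc} with $c_0=c_1/4$.
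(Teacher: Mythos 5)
Your architecture coincides with the paper's: modulation by the implicit function theorem (your orthogonality conditions are, after an integration by parts, the same as \fref{ortho}), the Taylor expansion with vanishing linear term (Proposition \ref{lemA1}, where the cancellation indeed comes from the fixed-point identity of Lemma \ref{id-Q}), and reduction to coercivity of the Hessian \fref{deriv2} modulo the translation modes (Proposition \ref{antonov}), with the remainder absorbed for $\delta_0$ small. Those parts are correct and match the paper.

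The gap is the generalized Antonov inequality itself, which you rightly flag as the main obstacle but do not prove, and your sketch of it would not go through as stated. First, the claimed convexity $\nabla^2_{(x,v)}e\gtrsim 1$ is not the relevant condition and is false: in the $x$-directions $\nabla^2 e=\nabla^2\phi_Q$, whose radial eigenvalue $\phi_Q''$ is negative outside the support of $Q$ (there $\phi_Q\sim -c/r$), and only $\Delta\phi_Q=\rho_Q\geq 0$ holds. Second, a single global duality argument does not identify the kernel as exactly $\mbox{Span}(\pa_{x_i}\phi_Q)$: the paper must first split $h=h_0+h_1$ into its spherical average and the rest, observe that $\Pi h_1=0$ so that on $(\dot H^1_{rad})^\perp$ the operator reduces to the Schr\"odinger operator $-\Delta-V_Q$, and then use $\mathcal L(\nabla\phi_Q)=0$ together with Sturm--Liouville theory ($\phi_Q'>0$ is the ground state of the $k=1$ spherical-harmonic block $A_1$) to get positivity and the kernel there. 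The H\"ormander scheme is used only on the radial sector, where one sets $Tf=\frac{1}{r^2\sqrt{2(e-\phi_Q(r))}}\pa_r f$, solves $Tf=h-\Pi h$ explicitly by \fref{deff} (the vanishing of $f$ at $r=r(e)$ is precisely the definition of $\Pi$), integrates by parts, applies Cauchy--Schwarz with the weight $\rho_Q(r)=\frac{8\pi\sqrt 2}{3}\int|F'(e)|(e-\phi_Q)_+^{3/2}de$, and closes with the Hardy-type bound \fref{keyhardy}; the convexity input is the exact identity $-\frac{T^2g}{g}=\frac{3}{r^4u^4}\left(\rho_Q+\frac{\phi_Q'}{r}\right)$ for the specific weight $g=\left(r\sqrt{2(e-\phi_Q)}\right)^3$, whose constant $3$ must match the Cauchy--Schwarz step exactly since the resulting inequality is sharp. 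None of this is recoverable from a generic convexity of $e$; it uses the Poisson equation and the radial monotonicity of $\phi_Q$ in an essential and quantitative way, so as written the central step of your proof is missing.
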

This section will be devoted to the proof of  Proposition \ref{propJ} which relies first on the second order Taylor expansion of $\mathcal J$ at $\phi_Q$, Proposition \ref{lemA1}, and then on the coercivity of the Hessian which is the second main key to our analysis, Proposition \ref{antonov}, and corresponds to a generalized Antonov's coercivity property.


\subsection{Differentiability of $\mathcal J$}
\label{sectionjdiff}


Our aim in this section is to prove the differentiability of $\mathcal J$ at $\phi_Q$ and to compute the first two derivatives.

Let us start with differentiability properties of the function $\phi\mapsto a_{\phi}$ defined in Lemma \ref{lemaphi}, see Appendix \ref{appendixa} for the proof.
  
\begin{lemma}[Continuity and differentiability properties of $\phi\mapsto a_{\phi}$]
\label{lemdiffa}
Let $\phi,\widetilde \phi\in \calX$ and let $h=\phi-\widetilde \phi$. Then the following holds.\\
(i) The function $(\lambda,e)\mapsto a_{\phi+\lambda h}(e)$ is a $\calC^1$ function on $[0,1]\times \RR_-^*$ and we have
\be
\label{derivaphi}
\frac{\pa}{\pa \lambda}a_{\phi+\lambda h}(e)=-4\pi\sqrt{2}\int_{\RR^3}(e-\phi(x)-\lambda h(x))_+^{1/2}h(x)dx.
\ee
(ii) Let $s\in \RR_+^*$. Then the function $\lambda\mapsto a_{\phi+\lambda h}^{-1}(s)$ is differentiable on $[0,1]$ and we have
\be
\label{derivaphim1}
\frac{\pa}{\pa \lambda}a_{\phi+\lambda h}^{-1}(s)=
\frac{\ds \int_{\RR^3}(a^{-1}_{\phi+\lambda h}(s)-\phi(x)-\lambda h(x))_+^{1/2}h(x)dx}{{\ds \int_{\RR^3}(a^{-1}_{\phi+\lambda h}(s)-\phi(x)-\lambda h(x))_+^{1/2}dx}}\,.
\ee
\end{lemma}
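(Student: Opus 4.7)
The plan is to differentiate the explicit formula \fref{aphi} under the integral sign by dominated convergence, then invert via the implicit function theorem for part (ii). The structural fact that makes this work is that $\phi+\lambda h=(1+\lambda)\phi-\lambda\widetilde\phi$ is, uniformly in $\lambda\in[0,1]$, a continuous function on $\RR^3$ vanishing at infinity; consequently, for each $e<0$ the set $\{\phi+\lambda h<e\}$ is contained in a fixed compact set $K(e)\subset\RR^3$ depending only on $e$, $\phi$, and $\widetilde\phi$.

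For (i), starting from
\begin{equation*}
a_{\phi+\lambda h}(e)=\frac{8\pi\sqrt{2}}{3}\int_{\RR^3}\bigl(e-\phi(x)-\lambda h(x)\bigr)_+^{3/2}dx,
\end{equation*}
the integrand is pointwise $C^1$ in $(\lambda,e)$ with $\pa_\lambda$-derivative $-\tfrac{3}{2}(e-\phi-\lambda h)_+^{1/2}h$ and $\pa_e$-derivative $\tfrac{3}{2}(e-\phi-\lambda h)_+^{1/2}$. Around a fixed point $(\lambda_0,e_0)\in[0,1]\times\RR_-^*$, I pick $e_1\in(e_0,0)$; for $(\lambda,e)$ in a small neighborhood the supports of all these integrands lie in the single compact $K(e_1)$, on which $|\phi|$, $|\widetilde\phi|$, $|h|$ are bounded. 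This supplies $\lambda$- and $e$-independent integrable dominants, so dominated convergence produces \fref{derivaphi} together with the analogous formula $\pa_e a_{\phi+\lambda h}(e)=4\pi\sqrt{2}\int(e-\phi-\lambda h)_+^{1/2}dx$ already recorded in Lemma \ref{lemaphi}. A further dominated-convergence step, on the same compact, gives joint continuity of the two partial derivatives, hence the $C^1$ regularity on $[0,1]\times\RR_-^*$.

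For (ii), at fixed $s>0$ I apply the implicit function theorem to $\Psi(\lambda,e):=a_{\phi+\lambda h}(e)-s$, which by (i) is $C^1$ on $[0,1]\times\RR_-^*$ and satisfies, by Lemma \ref{lemaphi}(ii),
\begin{equation*}
\pa_e\Psi\bigl(\lambda,a_{\phi+\lambda h}^{-1}(s)\bigr)=a_{\phi+\lambda h}'\bigl(a_{\phi+\lambda h}^{-1}(s)\bigr)>0.
\end{equation*}
This yields the differentiability of $\lambda\mapsto a_{\phi+\lambda h}^{-1}(s)$ together with
\begin{equation*}
\pa_\lambda a_{\phi+\lambda h}^{-1}(s)=-\frac{\pa_\lambda a_{\phi+\lambda h}(e)}{\pa_e a_{\phi+\lambda h}(e)}\bigg|_{e=a_{\phi+\lambda h}^{-1}(s)},
\end{equation*}
and substituting the two formulas from (i) produces exactly \fref{derivaphim1}. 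The only subtle point throughout is arranging a uniform compact support for the integrands in (i); this rests on the continuity and the vanishing at infinity built into $\calX$, but not on any quantitative decay rate, so no use is made of the finer control $m(\phi)>0$.
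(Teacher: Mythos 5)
Your proof is correct, and part (i) is essentially the paper's own argument: both differentiate the explicit formula \fref{aphi} under the integral sign, the key point being that the sublevel sets $\{\phi+\lambda h<e\}$ sit inside a fixed compact once $e$ stays below some $e_1<0$, so dominated convergence applies; the paper is terser but identical in substance. For part (ii), however, you take a genuinely different route. The paper does not invoke the implicit function theorem: it first proves continuity of $\lambda\mapsto a^{-1}_{\phi+\lambda h}(s)$ by a compactness-and-contradiction argument (ruling out $a^{-1}_{\phi+\lambda_n h}(s)\to 0$ via the divergence $a_\psi(e)\to+\infty$ as $e\to 0^-$), and then computes the difference quotient by hand, factoring it as $A_1(\lambda)A_2(\lambda)$ and passing to the limit in each factor separately. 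Your argument subsumes both steps: the joint $\calC^1$ regularity from (i) together with $a'_{\phi+\lambda h}\bigl(a^{-1}_{\phi+\lambda h}(s)\bigr)>0$ from Lemma \ref{lemaphi} are exactly the hypotheses of the implicit function theorem, and the implicitly defined solution is identified with $a^{-1}_{\phi+\lambda h}(s)$ by strict monotonicity. The paper's bare-hands version only uses differentiability in $\lambda$ and in $e$ separately and transfers verbatim from \cite{LMR8}; yours is shorter and yields the continuity of $a^{-1}$ for free. Two minor caveats, neither of which affects the substance. First, $m(\phi)>0$ has not entirely disappeared from your proof: it is hidden in the surjectivity of $a_{\phi+\lambda h}$ onto $\RR_+$ (Lemma \ref{lemaphi}(ii)), which is what makes $a^{-1}_{\phi+\lambda h}(s)$ well defined for every $s>0$ in part (ii); your disclaimer is accurate only for part (i). Second, the identity $\phi+\lambda h=(1+\lambda)\phi-\lambda\widetilde\phi$ follows the sign convention in the statement, but the appendix and Proposition \ref{lemA1} make clear the intended convention is $h=\widetilde\phi-\phi$, so that $\phi+\lambda h=(1-\lambda)\phi+\lambda\widetilde\phi$ is a convex combination and stays in $\calX$; with the literal reading $\phi+\lambda h$ need not lie in $\calX$, which you implicitly need when quoting Lemma \ref{lemaphi} for $\phi+\lambda h$ in part (ii).
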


We are now in position to differentiate the functional $\mathcal J$.

\begin{proposition}[Differentiability of $\mathcal J$]
\label{lemA1}
The functional $\mathcal J$ defined by \fref{J} on $\calX$ satisfies the following properties.\\
(i) {\em Differentiability of $\mathcal J$.} Let $\phi,\widetilde \phi \in \X$, then the function
$$\lambda\mapsto \mathcal J(\phi+\lambda(\widetilde \phi-\phi))$$ is twice differentiable on $[0,1]$.\\
(ii) {\em Taylor expansion of $\mathcal J$ near $\phi_Q$.}
There holds the Taylor expansion near $\phi_Q$: $\forall \phi\in \calX$,
\be
\label{taylor}
\mathcal J(\phi)-\mathcal J(\phi_Q)=\frac{1}{2}D^2\mathcal J(\phi_Q)(\phi-\phi_Q,\phi-\phi_Q)+\eta\left(\|\phi-\phi_Q\|_{L^\infty}\right)\|\na \phi-\na \phi_Q\|_{L^2}^2
\ee
where
$$\eta(\delta)\to 0 \mbox{ as }\delta \to 0.$$
Moreover, the second derivative of $\mathcal J$ at $\phi_Q$ in the direction $h$ is given by
\bea
\label{deriv2}
& & \hspace*{-5mm}D^2\mathcal J(\phi_Q)(h,h)\\
\nonumber & &= \int_{\RR^3}|\na h|^2\,dx-\int_{\RR^6}\left|F'\left(\frac{|v|^2}{2}+\phi_Q(x)\right)\right|\left(h(x)-\Pi h(x,v)\right)^2 dxdv\,,
\eea
where $\Pi h$ is the projector:
\be
\label{phiphi}
\Pi h(x,v)=\frac{\ds \int_{\RR^3}\left(\frac{|v|^2}{2}+\phi_Q(x)-\phi_Q(y)\right)_+^{1/2}h(y)dy}{\ds \int_{\RR^3}\left(\frac{|v|^2}{2}+\phi_Q(x)-\phi_Q(y)\right)_+^{1/2}dy}.
\ee
\end{proposition}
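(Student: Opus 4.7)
The plan is to first reduce $\mathcal J$ to a form in which the $\phi$-dependence is isolated, then differentiate along linear paths using Lemma \pref{lemdiffa}, and finally bound the Taylor remainder. Expanding the square in \fref{J} and integrating by parts gives
\[
\mathcal J(\phi) = \int_{\RR^6}\left(\frac{|v|^2}{2}+\phi(x)\right)Q^{*\phi}(x,v)\,dxdv + \frac{1}{2}\|\nabla\phi\|_{L^2}^2,
\]
and applying the change of variables \fref{changeofvariable} with $\alpha(e)=e$, $\gamma(s)=Q^*(s)$ reduces this further to
\[
\mathcal J(\phi) = \int_0^{+\infty}a_\phi^{-1}(s)\,Q^*(s)\,ds + \frac{1}{2}\|\nabla\phi\|_{L^2}^2,
\]
which is finite thanks to the compact support of $Q^*$. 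The twice differentiability of $\lambda\mapsto \mathcal J(\phi+\lambda(\widetilde\phi-\phi))$ in item (i) then reduces to that of $\lambda\mapsto a_{\phi+\lambda h}^{-1}(s)$, provided by Lemma \pref{lemdiffa}(ii), together with dominated convergence under the $s$-integral (the majorants coming from uniform-in-$\lambda$ bounds on $(a_{\phi+\lambda h}^{-1}(s)-\phi-\lambda h)_+^{\pm 1/2}$ near $\phi_Q$).

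For the Taylor expansion near $\phi_Q$, I would first check that $D\mathcal J(\phi_Q)[h]=0$: using Lemma \pref{lemdiffa}(ii), Fubini, the change of variables $s=a_{\phi_Q}(e)$, and the identity $Q^*\circ a_{\phi_Q}(e)=F(e)$ from Lemma \pref{id-Q}, the linear contribution from the $a_\phi^{-1}$ term collapses to $\int h(x)\rho_Q(x)\,dx$ (after recognizing that $\int_{\phi_Q(x)}^0 F(e)(e-\phi_Q(x))^{1/2}de = \rho_Q(x)/(4\pi\sqrt 2)$), which is cancelled by $\int\nabla\phi_Q\cdot\nabla h=-\int\rho_Q h$. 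Differentiating once more, with $A(\lambda,s)$ and $B(\lambda,s)$ denoting the numerator and denominator in Lemma \pref{lemdiffa}(ii), the key observation is that $\partial_\lambda a_{\phi_\lambda}^{-1}(s)=A/B$, so the derivative of this ratio collapses into the remarkably clean identity
\[
\partial_\lambda\left(\frac{A}{B}\right) = -\frac{1}{2B}\int_{\RR^3}\left(a^{-1}_{\phi_\lambda}(s)-\phi_\lambda(x)\right)_+^{-1/2}(h(x)-\Pi^\lambda h)^2\,dx,
\]
with $\Pi^\lambda h=A/B$. Evaluating at $\lambda=0$ and converting $ds=a_{\phi_Q}'(e)de$ back to $de$ produces a weighted $e$-integral with weight $F(e)(e-\phi_Q)_+^{-1/2}$. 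An integration by parts in $e$, exploiting $F'<0$ so that $F(e)=\int_e^0|F'|de'$, the vanishing of boundary terms ($F(0)=0$ and $(e-\phi_Q)^{1/2}$ vanishes at $e=\phi_Q(x)$), together with the orthogonality $\int(e-\phi_Q)_+^{1/2}(h-\Pi_e h)dx=0$ which kills the cross-term from $\partial_e\Pi_e h$, replaces this weight by $|F'(e)|(e-\phi_Q)_+^{1/2}$. Reverting to $v$-coordinates via the spherical change of variables then recovers \fref{deriv2}.

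The hardest step is the little-$\eta$ remainder. Writing Taylor with integral remainder reduces the task to showing
\[
\int_0^1(1-\lambda)\bigl(D^2\mathcal J(\phi_Q+\lambda h)-D^2\mathcal J(\phi_Q)\bigr)[h,h]\,d\lambda = \eta(\|h\|_{L^\infty})\|\nabla h\|_{L^2}^2,
\]
uniformly for $\|h\|_{L^\infty}$ small. This demands two ingredients: (a) a uniform bound $|D^2\mathcal J(\phi_Q+\lambda h)[h,h]|\leq C\|\nabla h\|_{L^2}^2$ for $\lambda\in[0,1]$, and (b) pointwise convergence as $\|h\|_{L^\infty}\to 0$ of the integrand to its value at $\lambda=0$. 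Point (a), after returning to the phase-space representation via the inverse integration by parts, reduces to controlling $\int|F'|(h-\Pi h)^2$ by $\|\nabla h\|_{L^2}^2$, which follows from $(h-\Pi h)^2\leq h^2$ combined with a Hardy-type estimate on $\int|F'(|v|^2/2+\phi_Q(x))|h(x)^2\,dxdv$ (reducible by integration in $v$ to an $L^2$ bound of $h$ against a density of $\rho_Q$-type, itself controlled by $\|\nabla h\|_{L^2}^2$ through Sobolev embedding and the decay of $F'$ and $\phi_Q$). Point (b) follows from the continuity of $\phi\mapsto a_\phi^{-1}$ provided by Lemma \pref{lemdiffa} together with dominated convergence, using the uniform majorants from step (a).
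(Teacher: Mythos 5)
Your reduction of $\mathcal J$ to $\int_0^{+\infty}a_\phi^{-1}(s)Q^*(s)\,ds+\frac12\|\nabla\phi\|_{L^2}^2$, the verification that $D\mathcal J(\phi_Q)=0$ via $\rho_Q(x)=4\pi\sqrt 2\int F(e)(e-\phi_Q(x))_+^{1/2}de$, and the identity
$$\partial_\lambda\Bigl(\tfrac{A}{B}\Bigr)=-\frac{1}{2B}\int_{\RR^3}\bigl(a^{-1}_{\phi_\lambda}(s)-\phi_\lambda(x)\bigr)_+^{-1/2}\bigl(h(x)-\Pi^\lambda h\bigr)^2dx$$
are all correct (I checked the algebra: $\partial_\lambda(A/B)=\frac1{2B}\int w_\lambda^{-1/2}(A/B-h)(h-A/B)\,dx$), and the final integration by parts in $e$ with the orthogonality killing the $\partial_e\Pi_e h$ cross-term does recover \fref{deriv2}. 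This is genuinely the same overall strategy as the paper (differentiate the profile $a_\phi^{\pm1}$ along segments via Lemma \ref{lemdiffa}, then Taylor with integral remainder), but with the operations reordered. That reordering creates the first gap: by differentiating twice in $\lambda$ \emph{before} integrating by parts in $e$, you produce the singular weight $(a_{\phi_\lambda}^{-1}(s)-\phi_\lambda(x))_+^{-1/2}$. For a general $\phi_\lambda\in\calX$ — which is all you have along the segment needed for part (i) and for the integral remainder — $\phi_\lambda$ is merely continuous, its level sets can be degenerate, and neither the finiteness of $\int w_\lambda^{-1/2}dx$ nor the differentiation under the integral sign defining $A$ and $B$ is justified. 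The paper avoids this entirely by first integrating by parts in $e$ at the level of the \emph{first} derivative, transferring one derivative onto $Q^*$ (legitimate because $Q^*$ is monotone, hence ${Q^*}'\in L^1(0,L_0)$), so that only the powers $(\cdot)_+^{1/2}$ and $(\cdot)_+^{3/2}$ ever appear; your computation is rigorous at $\lambda=0$ where $\phi_Q$ is radial with $\phi_Q'>0$, but not along the segment.

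The second and more serious gap is the remainder. The statement \fref{taylor} requires that
$$\sup_{\lambda\in[0,1]}\ \sup_{\|\nabla\widetilde h\|_{L^2}=1}\bigl|\bigl(D^2\mathcal J_0(\phi_Q+\lambda h)-D^2\mathcal J_0(\phi_Q)\bigr)(\widetilde h,\widetilde h)\bigr|\longrightarrow 0\quad\mbox{as}\quad\|h\|_{L^\infty}\to0,$$
i.e.\ uniformity over the \emph{direction} $\widetilde h=h/\|\nabla h\|_{L^2}$, which is not small in any norm — only the base point moves by $O(\|h\|_{L^\infty})$. Your ingredients (a) uniform boundedness and (b) pointwise convergence plus dominated convergence give convergence for each fixed direction, but dominated convergence in which variable would yield uniformity over the unit sphere of $\dot H^1$? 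This is exactly the difficulty the paper's Step 4 addresses by a contradiction/compactness argument: extract $\widetilde h_n\rightharpoonup\widetilde h$, use the local compactness of $\dot H^1\hookrightarrow L^2_{loc}$ together with the fact that the integrals are supported in a fixed ball, and exploit that the quadratic form $D^2\mathcal J_0(\phi_Q)$ is \emph{compact} on $\dot H^1$ (Remark \ref{compact}), so that both $D^2\mathcal J_0(\phi_Q+\lambda_nH_n)(\widetilde h_n,\widetilde h_n)$ and $D^2\mathcal J_0(\phi_Q)(\widetilde h_n,\widetilde h_n)$ converge to the same limit $D^2\mathcal J_0(\phi_Q)(\widetilde h,\widetilde h)$. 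Without invoking this compactness of the Hessian (relative to the Dirichlet term), the claimed form of the error $\eta(\|\phi-\phi_Q\|_{L^\infty})\|\nabla\phi-\nabla\phi_Q\|_{L^2}^2$ does not follow from your argument as written.
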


\begin{remark} The projector $\Pi h$ given by \fref{phiphi} should be understood as the projector onto the functions which depend only on the microscopic energy $e(x,v)=\frac{|v|^2}{2}+\phi_Q(x).$
\end{remark}

\begin{proof}
Let us decompose $\mathcal J$ into a kinetic and a potential part:
\be
\label{JJ}
\mathcal J (\phi)= \mathcal J _{Q^*}(\phi) =\calH(Q^{*\phi})+\frac{1}{2}\|\na \phi-\na \phi_{Q^{*\phi}}\|^2=\frac{1}{2}\int|\nabla \phi|^2dx+ \mathcal J_0(\phi)\ee
with 
\be
\label{A0}
\mathcal J_0(\phi)=\int_{\RR^6}\left(\frac{|v|^2}{2}+\phi(x)\right)Q^{*\phi}(x,v)\,dxdv.
\ee
Note from Proposition \ref{proprearrangement} that $Q^{*\phi}\in \calE$ and is supported in $\frac{|v|^2}{2}+\phi<0$, thus
$$-\infty<\mathcal J_0(\phi)\leq 0.$$
Let $\phi,\widetilde \phi \in \X$ and let $h=\widetilde \phi-\phi$. We shall differentiate with respect to $\lambda$ the function $\mathcal J_0(\phi+\lambda h)$.

\bs
\ni
{\em Step 1. First derivative of $\mathcal J_0$}.

\ms
\ni
Introduce the following primitive of $Q^*$:
\be
\label{G}
G(s)=\int_0^s Q^*(\sigma)d\sigma,
\ee
which is a uniformly bounded $\calC^1$ function with bounded derivative, since by assumption $Q$ (thus $Q^*$) is continuous and compactly supported. We first transform the expression \fref{A0} of $\mathcal J_0$. By making the change of variable in velocity $e=\frac{|v|^2}{2}+\phi$ and using \fref{aphiprime}, we get
\bee
\mathcal J_0(\phi) & = & \int_{\min \phi}^0 eQ^*\left(a_\phi(e)\right) a_\phi'(e)\,de=\int_{\min \phi}^0 e\left(G\circ a_\phi\right)'(e)\,de\\
& = &\left[eG(a_{\phi}(e))\right]_{\min \phi}^0-\int_{\min \phi}^0 G\left(a_\phi(e)\right)de=  -\int_{-\infty}^0 G\left(a_\phi(e)\right)de.
\eee
Note that the boundary term is dropped thanks to the definition \fref{G} and the following properties:
$$a_\phi(\min \phi)=0, \quad \lim_{e\to 0-}a_\phi(e)=+\infty,\quad \int_0^{+\infty}Q^*(\sigma)d\sigma=\|Q\|_{L^1}<+\infty.$$

In order to differentiate $ \mathcal J_0(\phi+\lambda h)$ with respect to $\lambda$, we now use \fref{derivaphi} and the $\calC^1$ smoothness of $G$ to derive: $\forall e<0$
$$\frac{\pa}{\pa\lambda}G(a_{\phi+\lambda h}(e))=-4\pi\sqrt{2}\,Q^*\left(a_{\phi+\lambda h}(e)\right)\int_{\RR^3}(e-\phi(x)-\lambda h(x))_+^{1/2}h(x)dx\,.
$$
Recall that we have $\mbox{Supp }(Q^*)=[0,L_0]$, with
$$L_0=\mbox{meas (Supp $Q$)}<+\infty.$$
Hence, from Lemma \ref{lemdiffa} {\em (i)}, we deduce that there exists $e_1<e_2<0$ such that
\be
\label{support}
\left\{(\lambda,e)\in [0,1]\times \RR_-^*:\, a_{\phi+\lambda h}(e)\in \mbox{Supp}(Q^*)\right\}\subset [0,1]\times [e_1,e_2].
\ee
Moreover, we have the following uniform bound: for all $(\lambda,e)$,
\bee
\left|\frac{\pa}{\pa\lambda}G(a_{\phi+\lambda h}(e))\right|&\leq&4\pi\sqrt{2}\|Q^*\|_{L^\infty}\int_{\RR^3}(e_2-(1-\lambda)\phi(x)-\lambda \widetilde\phi(x))_+^{1/2}h(x)dx\\
&\leq&4\pi\sqrt{2}\|Q^*\|_{L^\infty}\int_{\RR^3}(e_2-\phi(x)-\widetilde \phi(x))_+^{1/2}h(x)dx<+\infty.
\eee
Therefore, Lebesgue's derivation theorem ensures:
\bea
\label{lam1}
& & \hspace*{-1.5cm}\nonumber \frac{\pa}{\pa\lambda}\mathcal J_0(\phi+\lambda h) =  -\frac{\pa}{\pa\lambda}\int_{-\infty}^0 G\left(a_{\phi+\lambda h}(e)\right)de\\
 & =&   4\pi\sqrt{2}\int_{-\infty}^0\int_{\RR^3}Q^*\left(a_{\phi+\lambda h}(e)\right)(e-\phi(x)-\lambda h(x))_+^{1/2}h(x)\,dxde.
\eea

\bs
\ni
{\em Step 2. Second derivative of $\mathcal J_0$}.

\ms
\ni
Let us now compute the second derivative of $\mathcal J_0(\phi+\lambda h)$ with respect to $\lambda$. First, an integration by parts of \fref{lam1} with respect to the variable $e$ gives
$$
\frac{\pa}{\pa\lambda}\mathcal J_0(\phi+\lambda h)=-\frac{8\pi\sqrt{2}}{3}\int_{-\infty}^0\int_{\RR^3}{Q^*}'\left(a_{\phi+\lambda h}(e)\right)a_{\phi+\lambda h}'(e)(e-\phi(x)-\lambda h(x))_+^{3/2}h(x)\,dxde.
$$
Applying the change of variable $s=a_{\phi+\lambda h}(e)$, we obtain
\bea
\nonumber
\frac{\pa}{\pa\lambda}\mathcal J_0(\phi+\lambda h) &=&-\frac{8\pi\sqrt{2}}{3}\int_{0}^{L_0}ds\,{Q^*}'(s)\int_{\RR^3}(a_{\phi+\lambda h}^{-1}(s)-\phi(x)-\lambda h(x))_+^{3/2}h(x)dx\\
&=&-\frac{8\pi\sqrt{2}}{3}\int_{0}^{L_0}\int_{\RR^3}{Q^*}'(s)g(\lambda,x,s)h(x)dsdx,
\label{integ}
\eea
with
$$
g(\lambda,x,s)=(a_{\phi+\lambda h}^{-1}(s)-\phi(x)-\lambda h(x))_+^{3/2}.
$$
Recall that, by \fref{support}, the quantity $e=a_{\phi+\lambda h}^{-1}(s)$ can be restricted to some interval $[e_1,e_2]$ in this integral, with $e_1<e_2<0$. Moreover, as in Step 1 of the proof of Lemma \ref{lemdiffa}, one deduces from the decay at the infinity of $\phi$ and $\widetilde \phi$ that the domain
$$\left\{x\in\RR^3:\,\phi(x)+\lambda h(x)\right\}\leq e_2$$
is bounded independently of $\lambda$. Therefore, the variable $x$ in the integral \fref{integ} can be restricted to a bounded domain.

Let us differentiate \fref{integ} with respect to $\lambda$. From \fref{derivaphim1}, one gets
\bee
\frac{\pa}{\pa \lambda}g(\lambda,x,s)&=& -\frac{3}{2}(a_{\phi+\lambda h}^{-1}(s)-\phi(x)-\lambda h(x))_+^{1/2}h(x)\\
&&\hspace*{-1cm}+\frac{3}{2}(a_{\phi+\lambda h}^{-1}(s)-\phi(x)-\lambda h(x))_+^{1/2}\frac{\ds \int_{\RR^3}(a_{\phi+\lambda h}^{-1}(s)-\phi(x)-\lambda h(x))_+^{1/2}h(x)\,dx}{\ds \int_{\RR^3}(a_{\phi+\lambda h}^{-1}(s)-\phi(x)-\lambda h(x))_+^{1/2}\,dx}
\eee
with the uniform estimate: for all $s\in [0,L_0]$, $\lambda\in [0,1]$, $x\in \RR^3$
\be
\label{estderuniforme}
\left|\frac{\pa}{\pa \lambda}g(\lambda,x,s)\right|\leq 3(e_2+|\min\phi|+|\min\widetilde \phi|)_+^{1/2}\|h\|_{L^\infty}.
\ee
Since the function $s\mapsto Q^*(s)$ is monotone decreasing from $\|Q\|_{L^\infty}$ to 0, the function ${Q^*}'$ belongs to $L^1(0,L_0)$, and hence the uniform  domination \fref{estderuniforme} allows us to apply Lebesgue's derivation theorem and get:
\bea
\label{derivseconde}
\nonumber & &  \frac{\pa^2}{\pa\lambda^2}\mathcal J_0(\phi+\lambda h)= 4\pi\sqrt{2}\int_0^{L_0}ds\,{Q^*}'(s)\int_{\RR^3}(a_{\phi+\lambda h}^{-1}(s)-\phi(x)-\lambda h(x))_+^{1/2}(h(x))^2dx\\
& & \qquad  -4\pi\sqrt{2}\int_0^{L_0}ds\,{Q^*}'(s)\frac{\left(\ds \int_{\RR^3}(a_{\phi+\lambda h}^{-1}(s)-\phi(x)-\lambda h(x))_+^{1/2}h(x)\,dx\right)^2}{\ds \int_{\RR^3}(a_{\phi+\lambda h}^{-1}(s)-\phi(x)-\lambda h(x))_+^{1/2}\,dx}\,.
\eea

\bs
\ni
{\em Step 3. Identification of the first and second derivative of $\mathcal J$ at $\phi_Q$.}

\ms
\ni
Let $\phi\in \X$ and $h=\phi-\phi_Q$. We claim that
\be
\label{first}
D\mathcal J(\phi_Q)(h)=0.
\ee
Indeed, first remark from \fref{JJ} that
\be
\label{first2}
D\mathcal J(\phi_Q)(h)=D\mathcal J_0(\phi_Q)(h)+\int_{\RR^3}\na\phi_Q\cdot \na h\,dx.
\ee
Next, by \fref{lam1} and \fref{Qfixe}:
\bee
D\mathcal J_0(\phi_Q)(h) &=& 4\pi\sqrt{2}\int_{-\infty}^0\int_{\RR^3}Q^*\left(a_{\phi_Q}(e)\right)(e-\phi_Q(x))_+^{1/2}h(x)\,dxde\\
&=& 4\pi\sqrt{2}\int_{-\infty}^0\int_{\RR^3}F(e)(e-\phi_Q(x))_+^{1/2}h(x)\,dxde.
\eee
Applying the change of variable $e\mapsto u=\sqrt{2(e-\phi_Q(x))}$, it comes
\bee
D\mathcal J_0(\phi_Q)(h)&=&
4\pi\int_{0}^{+\infty}\int_{\RR^3}F\left(\frac{u^2}{2}+\phi_Q(x)\right)\,h(x)u^2dudx\\
&=&\int_{\RR^6}Q(x,v)h(x)\,dxdv,
\eee
where we used the expression \fref{Q} of $Q$. Hence, from the Poisson equation, we deduce after an integration by parts that
$$D\mathcal J_0(\phi_Q)(h)=-\int_{\RR^3}\na\phi_Q\cdot \na h\,dx,
$$
which together with \fref{first2} implies \fref{first}.

Let us now identify the right second derivative of $\mathcal J$ at $\phi_Q$. We have
\be
\label{seconde}
D^2\mathcal J(\phi_Q)(h,h)=D^2\mathcal J_0(\phi_Q)(h,h)+\int_{\RR^3}|\na h|^2\,dx
\ee
and, by \fref{derivseconde},
\bee
& & D^2\mathcal J_0(\phi_Q)(h,h)= 4\pi\sqrt{2}\int_0^{L_0}ds\,{Q^*}'(s)\int_{\RR^3}(a_{\phi_Q}^{-1}(s)-\phi_Q(x))_+^{1/2}(h(x))^2dx\\
& & \qquad  -4\pi\sqrt{2}\int_0^{L_0}ds\,{Q^*}'(s)\frac{\left(\ds \int_{\RR^3}(a_{\phi_Q}^{-1}(s)-\phi_Q(x))_+^{1/2}h(x)\,dx\right)^2}{\ds \int_{\RR^3}(a_{\phi_Q}^{-1}(s)-\phi_Q(x))_+^{1/2}\,dx}\,.
\eee
Using first the change of variable $s\mapsto e=a_{\phi_Q}^{-1}(s)$, \fref{aphiprime} and $F=Q^*\circ a_{\phi_Q}$, we get
\bee
D^2\mathcal J_0(\phi_Q)(h,h)&=& 4\pi\sqrt{2}\int_{-\infty}^0de\,F'(e)\int_{\RR^3}(e-\phi_Q(x))_+^{1/2}(h(x))^2dx\\
&&-4\pi\sqrt{2}\int_{-\infty}^0\,F'(e)\frac{\left(\ds \int_{\RR^3}(e-\phi_Q(x))_+^{1/2}h(x)\,dx\right)^2}{\ds \int_{\RR^3}(e-\phi_Q(x))_+^{1/2}\,dx}\,.
\eee
We next apply the change of variable $e\mapsto u=\sqrt{2(e-\phi_Q(x))}$ to get
\bee
D^2\mathcal J_0(\phi_Q)(h,h) & = & \int_{\RR^6}F'(e)(h(x))^2\,dxdv-\int_{\RR^6}F'(e)h(x)\Pi h(e) dxdv\\
& = & -\int_{\RR^6}|F'(e)|(h(x)-\Pi h(e))^2 dxdv,
\eee
where we used the shorthand notation $e=\frac{|v|^2}{2}+\phi_Q(x)$ and the fact that $\Pi$ given by \fref{phiphi} is the projector onto the functions which depend only on $e$. This together with \fref{seconde} concludes the proof of \fref{deriv2}.

\bs
\ni
{\em Step 4. Proof of the Taylor expansion \fref{taylor}.}

\ms
\ni
Let $\phi\in \X$ and $h=\phi-\phi_Q$. We first deduce from the fact that $\mathcal J(\phi_Q+\lambda h)$ is twice differentiable with respect to $\lambda$ that
$$
\mathcal J(\phi_Q+h)-\mathcal J(\phi_Q)=\int_0^1 (1-\lambda)\frac{\pa^2}{\pa \lambda^2}\mathcal J(\phi_Q+\lambda h)\,d\lambda
$$
and hence:
\bea
\label{qsd}
& &  \mathcal J(\phi_Q+h)-\mathcal J(\phi_Q)-\frac{1}{2}D^2\mathcal J(\phi_Q)(h,h)\\
\nonumber &= & \int_0^1 (1-\lambda)\left(D^2\mathcal J(\phi_Q+\lambda h)-D^2\mathcal J(\phi_Q)\right)(h,h)\,d\lambda
\\
\nonumber & = & \|\na h\|_{L^2}^2\int_0^1 (1-\lambda)\left(D^2\mathcal J_0(\phi_Q+\lambda h)-D^2\mathcal J_0(\phi_Q)\right)\left(\frac{h}{\|\na h\|_{L^2}},\frac{h}{\|\na h\|_{L^2}}\right)\,d\lambda.
\eea
We now claim the following continuity property:
\be
\label{conti2}
\sup_{\lambda\in [0,1]}\sup_{\|\na \widetilde h\|_{L^2}=1}\left|\left(D^2\mathcal J_0(\phi_Q+\lambda (\phi-\phi_Q))-D^2\mathcal J_0(\phi_Q)\right)(\widetilde h,\widetilde h)\right|\to 0
\ee
as $\|\phi-\phi_Q\|_{L^\infty}\to 0$. Assume \fref{conti2}, then
$$\lim_{\|h\|_{L^\infty}\to 0}\int_0^1 (1-\lambda)\left(D^2\mathcal J_0(\phi_Q+\lambda h)-D^2\mathcal J_0(\phi_Q)\right)\left(\frac{h}{\|\na h\|_{L^2}},\frac{h}{\|\na h\|_{L^2}}\right)\,d\lambda=0\,$$
and  \fref{qsd} now yields the Taylor expansion \fref{taylor}.\\
{\em Proof of \fref{conti2}}. We argue by contradiction and assume that there exists $\eps>0$, $H_n$, $\widetilde h_n$ and $\lambda_n\in [0,1]$  such that 
\be
\label{contr1}
\|H_n\|_{L^\infty}\leq \frac{1}{n}\,,\quad \|\na \widetilde h_n\|_{L^2}=1\,,
\ee
and
\be
\label{contr2}
\left|D^2\mathcal J_0(\phi_Q+ \lambda_n H_n)(\widetilde h_n,\widetilde h_n)-D^2\mathcal J_0(\phi_Q)(\widetilde h_n,\widetilde h_n)\right|>\eps.\ee
We denote $h_n=\lambda_n H_n$. Recall from \fref{derivseconde}:
\be
\label{d1quabis}
\begin{array}{l}
\ds D^2\mathcal J_0(\phi_Q+h_n)(\widetilde h_n,\widetilde h_n)=\\[4mm]
\ds \qquad =4\pi\sqrt{2}\int_0^{L_0}ds\,{Q^*}'(s)\int(a_{\phi_Q+h_n}^{-1}(s)-(\phi_Q+h_n)(x))_+^{1/2}(\widetilde h_n(x))^2\,dx\\[4mm]
\ds \qquad \quad -4\pi\sqrt{2}\int_0^{L_0}ds\,{Q^*}'(s)\frac{\ds \left(\int(a_{\phi_Q+h_n}^{-1}(s)-(\phi_Q+h_n)(x))_+^{1/2}\widetilde h_n(x)\,dx\right)^2}{\ds \int(a_{\phi_Q+h_n}^{-1}(s)-(\phi_Q+h_n(x))_+^{1/2}\,dx}.
\end{array}
\ee
Let us analyze the sequence $e_n=a_{\phi_Q+h_n}^{-1}(s)$. We claim that:
\be
\label{cla}
\forall s\in(0,L_0), \ \ \lim_{n\to+\infty}e_n=a_{\phi_Q}^{-1}(s).
\ee
Indeed, we observe 
$$s=a_{\phi_Q+h_n}(e_{n})\geq  \frac{8\pi\sqrt{2}}{3}\int \left(e_{n}-\phi_Q(x)-\|h_n\|_{L^\infty}\right)_+^{3/2}dx\geq a_{\phi_Q}\left(e_{n}-\frac{1}{n}\right)$$
which yields $e_{n}\leq a_{\phi_Q}^{-1}(s) +1/n$. Similarly, we also have $e_{n}\geq a_{\phi_Q}^{-1}(s) -1/n$ and \fref{cla} follows.

Let us now pass to the limit in \fref{d1quabis}. Note first that the domain of integration in $x$ of these integrals is uniformly bounded as $n\to +\infty$. Indeed, the set of integration is 
$$D_n(e_{n}):=\left\{x\in\RR^3:\,\phi_Q(x)+h_n(x)<e_{n}\right\}\subset \left\{x\in \RR^3:\,\phi_Q(x)\leq e_{n}+1/n\right\},$$
which is bounded for $n$ large enough, since $e_{n}\leq a_{\phi_Q}^{-1}(L_{0}) +1/n\leq \frac{1}{2}a_{\phi_Q}^{-1}(L_{0})$, and the continuous function $\phi_Q$ converges to zero at infinity.

Now the local compactness of the Sobolev embedding $\dot{H}^1\hookrightarrow L^p_{loc}$ for $1\leq p<6$ implies that there exists $\widetilde h\in \dot H^1_{rad}$ such that --up to a subsequence-- 
\be
\label{clocasopr}
\widetilde h_n\to \widetilde h \ \ \mbox{in} \ \ L^2_{loc} \ \ \mbox{as} \ \ n\to +\infty.
\ee
 Hence, for all $s\in (0,L_0)$ and for $i=0,1,2$, \fref{cla}, \fref{clocasopr} ensure:
\bee
& &  \hspace*{-1cm}\int_{\RR^3}(a_{\phi_Q+h_n}^{-1}(s)-(\phi_Q+h_n)(x))_+^{1/2}(\widetilde h_n(x))^i\,dx\\
&&= \int_{|x|\leq R}(a_{\phi_Q+h_n}^{-1}(s)-(\phi_Q+h_n)(x))_+^{1/2}(\widetilde h_n(x))^i\,dx\\
& &\to \int_{\RR^3}(a_{\phi_Q}^{-1}(s)-\phi_Q(x))_+^{1/2}(\widetilde h(x))^i\,dx \ \ \mbox{as} \ \ n\to +\infty.
\eee
Moreover, by Cauchy-Schwarz and $a_{\phi_Q+h_n}^{-1}(s)\leq 0$:
\bee
 \frac{\ds \left(\int(a_{\phi_Q+h_n}^{-1}(s)-\phi_Q-h_n)_+^{1/2}\widetilde h_n\,dx\right)^2}{\ds \int(a_{\phi_Q+h_n}^{-1}(s)-\phi-h_n)_+^{1/2}\,dx}\leq \int(a_{\phi_Q+h_n}^{-1}(s)-\phi_Q-h_n)_+^{1/2}(\widetilde h_n)^2\,dx&&\\
 \lesssim  \int_{|x|\leq R} (\|\phi_Q\|_{L^\infty}+\|h_n\|_{L^\infty})^{1/2}(\widetilde h_n)^2dx\lesssim 1.&&
\eee
Recall now that the function ${Q^*}'$ is $L^1$ on $[0,L_0]$, since $Q^*$ is decreasing and bounded. Therefore, Lebesgue's convergence theorem applied to (\ref{d1quabis}) yields:
$$D^2\mathcal J_0(\phi_Q+h_n)(\widetilde h_n,\widetilde h_n)\to D^2\mathcal J_0(\phi_Q)(\widetilde h,\widetilde h).$$
A similar argument gives:
\be
\label{comp}
D^2\mathcal J_0(\phi_Q)(\widetilde h_n,\widetilde h_n)\to D^2\mathcal J_0(\phi_Q)(\widetilde h,\widetilde h)
\ee
as $n\to +\infty$. This contradicts \fref{contr2} and concludes the proof of \fref{conti2}.

The proof of Proposition \ref{lemA1} is complete.
\end{proof}

\begin{remark}
\label{compact}
We have proved in this last Step 4 that for all sequence $\widetilde h_n$ bounded in $\dot H^1$, after extraction of a subsequence, we have the strong convergence \fref{comp}. Hence the quadratic form $D^2\mathcal J_0(\phi_Q)$ is compact on $\dot H^1$.
\end{remark}


\subsection{A new Antonov type inequality}


We now turn to the second key of our analysis which is a generalization of the celebrated Antonov's stability property --see Proposition 4.1 in \cite{LMR8} for a precise statement--:

\begin{proposition}[Generalized Antonov's stability property]
\label{antonov}
Let $Q$ satisfy the assumptions of Theorem \pref{thm} and consider the linear operator generated by the Hessian \fref{deriv2}: $$\mathcal L h=-\Delta h-\int_{\RR^3}|F'(e)|(h-\Pi h)dv.$$ 
Then $\mathcal L$ is a compact perturbation of the Laplacian operator on $\dot H^1$ and is positive: 
\be
\label{kernelnonradial}
\forall h\in \dot H^1, \ \ (\mathcal Lh,h)=D^2\mathcal J(\phi_Q)(h,h)\geq 0.
\ee Moreover, $$\mbox{Ker}(\mathcal L)=\{h\in \dot H^1 \ \ \mbox{with} \ \ \mathcal Lh=0\}=\mbox{Span}(\pa_{x_i}\phi_Q)_{1\leq i\leq 3}.$$ In particular, there exists $c_0>0$ such that
\be
\label{coercivityradial}
\forall h\in \dot H^1, \ \ (\mathcal Lh,h)\geq c_0 \|\nabla h\|_{L^2}^2-\frac{1}{c_0}\sum_{i=1}^3\left(\int_{\RR^3} h\Delta (\pa_{x_i}\phi_Q)\right)^2.
\ee
\end{proposition}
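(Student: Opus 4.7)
\emph{Compactness and kernel inclusion.} The compactness of $\mathcal{L}+\Delta$ on $\dot H^1$ is already contained in Remark~\ref{compact}: the proof of Proposition~\ref{lemA1} shows that the bilinear form $D^2\mathcal{J}_0(\phi_Q)$ sends weakly convergent sequences in $\dot H^1$ to strongly convergent ones. For the inclusion $\mathrm{Span}(\partial_{x_i}\phi_Q)\subset\ker\mathcal{L}$, I observe that $\mathcal{J}$ is translation invariant: the Jacobian $a_\phi$ is unchanged when $\phi$ is translated, so Lemma~\ref{id-Q} gives $Q^{*\phi_Q(\cdot - z)}(x,v)=Q(x-z,v)$ and hence $\mathcal{J}(\phi_Q(\cdot - z))=\mathcal{H}(Q)=\mathcal{J}(\phi_Q)$ for every $z\in\mathbb{R}^3$. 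Since $\phi_Q$ is a critical point of $\mathcal{J}$ by \eqref{first}, differentiating this identity twice in $z$ at $z=0$ forces $D^2\mathcal{J}(\phi_Q)(\partial_{x_i}\phi_Q,\partial_{x_j}\phi_Q)=0$ for all $i,j$; once positivity is established these directions must lie in $\ker\mathcal{L}$.

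\emph{Positivity via a H\"ormander-type weighted Poincar\'e inequality.} The heart of the proof is the bound
$$\int_{\mathbb{R}^3}|\nabla h|^2\,dx \ \geq\ \int_{\mathbb{R}^6}|F'(e)|\bigl(h(x)-\Pi h(x,v)\bigr)^2\,dxdv,\qquad h\in\dot H^1.$$
Performing the change of variables $v=\sqrt{2(e-\phi_Q(x))}\,\omega$ on $\mathbb{R}^3_v$ with $\omega\in\mathbb{S}^2$ and integrating out $\omega$, the right-hand side rewrites as
$$4\pi\sqrt{2}\int_{-\infty}^0 |F'(e)|\int_{\{\phi_Q<e\}}\bigl(h(x)-\langle h\rangle_e\bigr)^2\sqrt{e-\phi_Q(x)}\,dx\,de,$$
where $\langle h\rangle_e$ is the weighted mean of $h$ on $\Omega_e:=\{\phi_Q<e\}$ against $\sqrt{e-\phi_Q}\,dx$. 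For each $e$ the inner integral is the variance of $h$ with respect to the probability measure $d\mu_e = Z(e)^{-1}\sqrt{e-\phi_Q}\,dx = e^{-V_e}\,dx$, where $V_e=-\tfrac12\log(e-\phi_Q)+\log Z(e)$. My plan is to apply H\"ormander's $L^2$ duality: solve the Fokker--Planck equation $-\Delta u_e+\nabla V_e\cdot\nabla u_e = h-\langle h\rangle_e$ on $\Omega_e$; integration by parts yields
$$\int_{\Omega_e}(h-\langle h\rangle_e)^2\,d\mu_e = \int_{\Omega_e}\nabla h\cdot\nabla u_e\,d\mu_e,$$
and a Bakry--\'Emery/H\"ormander criterion converts a convexity bound $\nabla^2 V_e\geq \alpha_e\,\mathrm{Id}$ into $\|\nabla u_e\|_{L^2(d\mu_e)}^2 \leq \alpha_e^{-1}\|h-\langle h\rangle_e\|_{L^2(d\mu_e)}^2$. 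Cauchy--Schwarz and integration in $e$ against $|F'(e)|\,de$ should reassemble into the desired bound by $\int|\nabla h|^2$. A direct computation gives
$$\nabla^2 V_e = \frac{\nabla^2\phi_Q}{2(e-\phi_Q)}+\frac{\nabla\phi_Q\otimes\nabla\phi_Q}{2(e-\phi_Q)^2};$$
the second term is nonnegative, and the first will be controlled by exploiting the self-consistency relation $\Delta\phi_Q=\rho_Q=\int F(\tfrac12|v|^2+\phi_Q)\,dv$, which is precisely where the spherical steady-state structure enters.

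\emph{Equality case and coercivity.} If $(\mathcal{L}h,h)=0$, equality in the H\"ormander step forces $h-\langle h\rangle_e$ to be proportional to the relevant $\mu_e$-constants for a.e.\ $e$; tracking this identification across energy levels shows $h$ must be a linear combination of the translation tangent fields $\partial_{x_i}\phi_Q$, giving $\ker\mathcal{L}=\mathrm{Span}(\partial_{x_i}\phi_Q)_{1\leq i\leq 3}$. The coercivity estimate \eqref{coercivityradial} then follows from Fredholm theory: $\mathcal{L}$ is a positive compact perturbation of $-\Delta$ on $\dot H^1$ with three-dimensional kernel, hence coercive modulo this kernel. Using $(\partial_{x_i}\phi_Q,h)_{\dot H^1} = -\int h\,\Delta\partial_{x_i}\phi_Q\,dx$, the penalty $\sum_{i=1}^3(\int h\,\Delta\partial_{x_i}\phi_Q\,dx)^2$ is exactly the squared $\dot H^1$-projection of $h$ onto $\ker\mathcal{L}$, closing the estimate.

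\emph{Main obstacle.} The principal technical challenge is extracting a convexity-type lower bound on $\nabla^2 V_e$ that is uniform enough in $e$ to produce, after integration against $|F'(e)|\,de$, a clean control by the plain Dirichlet energy $\int|\nabla h|^2$ rather than a weighted version of it. The sign of $\nabla^2\phi_Q$ is not directly controlled by the Poisson equation $\Delta\phi_Q=\rho_Q$, and the possible singularity of $|F'(e)|$ at $e_0$ together with the boundary degeneracy of $\sqrt{e-\phi_Q}$ on $\partial\Omega_e$ forces an approximation procedure. It is at exactly this point that the nonlinear fixed-point identity $Q^{*\phi_Q}=Q$ supplied by Lemma~\ref{id-Q} has to be used in a nontrivial way to close the argument.
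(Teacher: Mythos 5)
Your treatment of the peripheral parts of Proposition \pref{antonov} is sound and close to the paper's: compactness via Remark \ref{compact}, the inclusion $\mbox{Span}(\pa_{x_i}\phi_Q)\subset\mbox{Ker}(\mathcal L)$ from translation invariance of $\mathcal J$ (the paper gets this more directly by differentiating the steady-state relation in the translation parameter, see \fref{ceofeiofu}), the Fredholm argument for \fref{coercivityradial}, and the change of variables identifying $\int|F'(e)|(h-\Pi h)^2$ with an integral over $e$ of weighted variances on $\Omega_e=\{\phi_Q<e\}$. The gap is in the central positivity step, and it is not merely the technical obstacle you flag at the end: the plan of applying, for each fixed $e$, a weighted Poincar\'e inequality for $d\mu_e=Z(e)^{-1}(e-\phi_Q)_+^{1/2}dx$ with a Bakry--\'Emery constant $\alpha_e$, and then integrating against $|F'(e)|\,de$, cannot yield the inequality $\int|\nabla h|^2\geq\int|F'(e)|(h-\Pi h)^2$ for all $h\in\dot H^1$. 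After Fubini, the sliced estimates give at best $\int|F'(e)|(h-\Pi h)^2\leq\int W(x)|\nabla h|^2dx$ with
$$W(x)=4\pi\sqrt2\int_{\phi_Q(x)}^{e_0}|F'(e)|\,\alpha_e^{-1}\,(e-\phi_Q(x))^{1/2}\,de,$$
so you need $W\leq1$ pointwise. But equality holds in the target inequality for $h=\pa_{x_i}\phi_Q$, which would force $W\equiv1$ on $\mbox{Supp}\,\rho_Q$ together with equality in every per-$e$ Poincar\'e inequality simultaneously; neither occurs. In addition, the first term of $\nabla^2V_e$ has no sign ($\phi_Q''=\rho_Q-2\phi_Q'/r$ is negative near the edge of the support) and the compensating rank-one term degenerates at the origin, so even a crude positive $\alpha_e$ is not free. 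The slicing in $e$ is the structural mistake: it discards the coupling between energy levels that makes the constant sharp.

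The paper's mechanism avoids both problems. First it removes the non-radial sector by spherical harmonics: $\Pi$ annihilates non-radial modes, the operators $A_k$ of \fref{cneoeifofejio} are monotone in $k$, and \fref{ceofeiofu} identifies $\phi_Q'>0$ as the Sturm--Liouville ground state of $A_1$; this is where the zero modes live and equality is attained. Only the \emph{strict} inequality \fref{tobeprovedformequadra} on the radial sector is proved by the H\"ormander scheme, and that scheme is not sliced in $e$: the operator $T$ of \fref{deft} differentiates in $r$ at fixed $e$, the primitive $f(r,e)$ of \fref{deff} solves $Tf=h-\Pi h$ and vanishes at both $r=0$ and $r=r(e)$, a single Cauchy--Schwarz step \fref{cneoeiyoe} couples all energy levels through the identity $\rho_Q(r)=\frac{8\pi\sqrt2}{3}\int|F'(e)|(e-\phi_Q(r))_+^{3/2}de$, and the convexity input is the Hardy-type bound \fref{keyestimateg} for the explicit $g=(r\sqrt{2(e-\phi_Q)})^3$, whose right-hand side involves $\rho_Q+\phi_Q'/r$ — positive regardless of the sign of $\phi_Q''$. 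Your equality-case discussion and the identification of the kernel on the radial sector inherit the same gap, since they rely on the unproven sliced inequality.
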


\begin{remark} The fact that the kernel is completely explicit and purely generated by the symmetry group is remarkable and reminiscent from similar statements in dispersive equations, see Weinstein \cite{W1}, the coercivity on the radial sector being always the most delicate problem.
\end{remark} 

\bs
\ni
{\em Proof of Proposition \ref{antonov}}

\ms
\ni
{\em Step 1. Positivity away from radial modes.}

\ms
\ni
Let $h\in \dot H^1_{rad}$, and let us introduce the projection of $h$ onto the radial sector 
$$h_0(r)=\frac{1}{4\pi}\int_{\SS^2}h(r\sigma)d\sigma,$$
where $\SS^2$ denotes the unit sphere in $\RR^3$ and $d\sigma$ denotes the surface measure on $\SS^2$ induced by the Lebesgue measure. We have the decomposition
$$h=h_0+h_1,  \  \ h_0\in \dot H^1_{rad}, \ \ h_1\in (\dot H^1_{rad})^{\perp}.$$ The angular integration in \fref{phiphi} ensures $\Pi h_1=0$ and thus $$(\mathcal Lh,h)=(\mathcal Lh_0,h_0)+\int_{\RR^3} |\nabla h_1|^2-\int_{\RR^3}V_Q\,h_1^2$$
with 
$$V_Q(r)=\int_{\RR^3}|F'(e)|dv=4\pi\sqrt{2}\int_{\phi_Q(0)}^0 |F'(e)|\left(e-\phi_Q(r)\right)_+^{1/2}de,$$
where we applied the change of variable $e=\frac{|v|^2}{2}+\phi_Q(r)$. Since $F'(e)<0$ and $F$ is bounded on $[\phi_Q(0), 0]$, the function $|F'|$ belongs to $L^1$. Therefore by dominated convergence, the function $V_Q$ is continuous. Moreover, since $F(e)=0$ for $e\geq e_0$ and since $\phi_Q$ is strictly increasing, we have:
$$\mbox{Supp}(V_Q)=[0,(\phi_Q)^{-1}(e_0)].$$
Hence, $V_Q$ being continuous and compactly supported, the Schr\"odinger operator $-\Delta -V_Q$ is a compact perturbation of the Laplacian on $\dot H^1$. Observe that $\phi_Q'$ (and also $\pa_{x_i}\phi_Q$ for $i=1,\cdots,3$) belongs to $\dot H^1(\RR^3)$. Translating the $\phi_Q$ equation yields:
$$\Delta \phi_Q(x+x_0)=\rho_Q(x+x_0)=\frac{8\pi\sqrt{2}}{3}\int_{-\infty}^0|F'(e)|(e-\phi_Q(x+x_0))_+^{3/2}de$$
and differentiating this relation with respect to $x_0$ yields at $x_0=0$: 
\be
\label{ceofeiofu}
\mathcal L (\nabla \phi_Q)=0.
\ee 
We now claim from standard argument that this implies the positivity of $\mathcal L$ away from radial modes, see \cite{W1} for related statements:
\be
\label{cneofieofeu}
\forall h\in \left(\dot{H}_{rad}^1\right)^\perp,  \ \ (\mathcal Lh,h)\geq 0,
\ee
and 
\be
\label{cnopfjepoeuvnkldv}
\{h\in (\dot H_{rad}^1)^\perp \ \ \mbox{with} \ \ \mathcal Lh=0\}=\mbox{Span}(\pa_{x_i}\phi_Q)_{1\leq i\leq 3},
\ee
Let us briefly recall the argument. Let us decompose $h\in (\dot{H}^1_{rad})^{\perp}$ into spherical harmonics, $$h=\sum_{k\geq 1} \sum_jh_{k,j}Y_{k,j}(\hat{x})$$ where $\hat{x}=\frac{x}{r}$ is the spherical variable and $-\Delta_{\Bbb S^2}Y_{k,j}=\lambda_kY_{k,j}.$ Then the radiality of $V_Q$ ensures the orthogonal decomposition $$\mathcal L h=\sum_{k\geq 1}\sum_j A_kh_{k,j}$$ with 
\be
\label{cneoeifofejio}
A_k=-\pa_{r}^2-\frac{2}{r}\pa_r+\frac{\lambda_k}{r^2}-V_Q(r), \ \ \lambda_k=k(k+1).
\ee
 For $k=1$, we have $\nabla \phi_Q=\phi_Q'(r)\hat{x}$ and \fref{ceofeiofu} implies $A_1\phi_Q'=0$. Since $\phi_Q'>0$ for $r>0$, $\phi_Q'\in \dot{H}^1$, $\phi_Q'$ is from standard Sturm Liouville results the ground state of $A_1$ which is thus positive with kernel on $\dot{H}^1_{rad}$ spanned by $\phi_Q'$. Now \fref{cneoeifofejio} ensures that $A_k$ is definite positive on $\dot{H}^1_{rad}$ for $k\geq 2$ and \fref{cneofieofeu}, \fref{cnopfjepoeuvnkldv} follow.

\bs
\ni
{\em Step 2. Coercivity away from radial modes.}

\ms
\ni
We now claim:
\be
\label{coercivityradialbis}
\forall h\in (\dot{H}_{rad}^1)^\perp, \ \ (\mathcal Lh,h)\geq c_1 \|\nabla h\|_{L^2}^2-\frac{1}{c_1}\sum_{i=1}^3\left(\int_{\RR^3} h\Delta(\pa_{x_i}\phi_Q)\right)^2
\ee
for some universal constant $c_1>0$. Let us briefly recall the argument which is standard. From \fref{cneofieofeu}, $$I=\inf \left\{(\mathcal Lh,h), \ \ h\in (\dot{H}_{rad}^1)^\perp, \ \ \int_{\RR^3} V_Qh^2=1, \ \ \int_{\RR^3} h\Delta(\pa_{x_i}\phi_Q)=0\right\}\geq 0.$$ We argue by contradiction and assume $I=0$, then there exists a sequence $h_n\in  (\dot{H}_{rad}^1)^\perp$ with $$\int_{\RR^3} V_Qh_n^2=1, \ \ \int_{\RR^3}|\nabla h_n|^2-\int_{\RR^3}V_Qh_n^2\leq \frac1n, \ \ \int_{\RR^3}h_n\Delta(\pa_{x_i}\phi_Q)=0.$$ From Sobolev embeddings, $h_n\to h$ in $L^p_{loc}$, $1\leq p<6$, up to a subsequence. Moreover, from \fref{ceofeiofu}, $ \Delta (\pa_{x_i}\phi_Q)$ is compactly supported and in $L^2$ from which passing to the limit  yields 
\be
\label{cneokeofueioueo}
(\mathcal Lh,h)\leq 0, \ \  \int_{\RR^3}h\Delta(\pa_{x_i}\phi_Q)=0, \ \ \int_{\RR^3} V_Qh^2=1
\ee 
and hence $h\neq 0$ attains the infimum. From Lagrange multipier theory, we thus can find $(\lambda_i)_{1\leq i\leq 3} $ with $$\mathcal Lh=\lambda_0V_Qh+\sum_{i=1}^3\lambda_i \Delta(\pa_{x_i}\phi_Q).$$ Taking the inner product with $h$ yields $\lambda_0=0$, then with $\pa_{x_i}\phi_Q$ yields $\lambda_i=0$, and thus $\mathcal L h=0$. From \fref{cnopfjepoeuvnkldv}, $h\in  \mbox{Span}(\pa_{x_i}\phi_Q)_{1\leq i\leq 3}$, but this contradicts the orthogonality relations \fref{cneokeofueioueo}, and \fref{coercivityradialbis} follows.

\bs
\ni
{\em Step 3. Strategy: H\"ormander's proof of Poincar\'e inequality.}

\ms
\ni
The relative compactness of $\mathcal L$ with respect to $\Delta$ in $\dot H^1$ follows from Remark \ref{compact}. It thus remains to prove \fref{coercivityradial} which from \fref{coercivityradialbis} and the Fredholm alternative is equivalent to:
\be
\label{tobeprovedformequadra}
\forall h\in \dot H^1_{rad}, \ \ h\neq 0, \ \ (\mathcal L h,h)>0.
\ee
Our main observation is now from \fref{deriv2} that \fref{tobeprovedformequadra} is nothing but a {\it Poincar\'e inequality with sharp constant}, and we now claim that we can adapt the celebrated proof by H\"ormander \cite{horm1,horm2} to our setting. H\"ormander's approach involves two key steps: the introduction of a self-adjoint operator adapted to the projection involved, and a suitable convexity property. The operator will be given by 
\be
\label{deft}
Tf(e,r)=\frac{1}{r^2\sqrt{2(e-\phi_Q(r))}}\pa_r f
\ee
which essentially satisfies the requirement $$\Pi h=0 \ \  \mbox{implies} \ \ h\in \mbox{Im}(T),$$ and the convexity will correspond to the lower bound: 
\be
\label{lowerboundconvexity}
-\frac{T^2g}{g}\geq \frac{3}{(r\sqrt{2(e-\phi_Q(r))})^4}\left(\rho_Q(r)+\frac{\phi_Q'(r)}{r}\right) 
\ee
with $$g(r,e)=\left(r\sqrt{2(e-\phi_Q(r))}\right)^3.$$
Note that the original proof of Antonov's stability criterion can be revisited as well using the transport operator $\tau=v\cdot\nabla_x-\nabla_{x} \phi_Q\cdot\nabla_v$ in the radial case as differential operator and whose image can be realized in the radial setting as the kernel of the {\it full} projection including the kinetic momentum $\ell$, see \cite{GR4}, \cite{LMR8} for more details.

\bs
\ni
{\em Step 4. Integration by parts.}

\ms
\ni
Recalling that $\phi_Q(r)$ is strictly increasing, for all $e\in (\phi_Q(0),0)$, we shall denote
$$r(e)=\phi_Q^{-1}(e).$$
Let $h\in\dot H^1_{rad}$ non zero.  Let \bee
\mathcal U&=&\left\{(r,e):\quad r>0, \ \ e\in(\phi_Q(0),0), \ \ e-\phi_Q(r)> 0\right\}\\
&=&\left\{(r,e):\quad e\in(\phi_Q(0),0), \ \ r\in(0,r(e))\right\}.
\eee
Given $\e>0$, we let $0\leq \chi_{\e}(e)\leq 1$ be a smooth cut off function such that 
$$\mbox{Supp}(\chi_\e)\subset (\phi_Q(0)+\e,e_0-\e), \ \ \chi_\e\equiv 1 \ \ \mbox{on} \ \ [\phi_Q(0)+2\e,e_0-2\e].$$ 
We let $$r_\e=r(\phi_Q(0)+\e).$$ 
Observe that
$$\int_0^{r(e)}\sqrt{e-\phi_Q(\tau)}\,\tau^2d\tau\geq c_{\e}>0 \ \ \mbox{on} \ \ \mbox{Supp}(\chi_\e),$$  and hence the radial interpolation estimate 
\be
\label{radial}
\|\sqrt{r}h(r)\|_{L^{\infty}(\RR^3)}\lesssim \|\nabla h\|_{L^2(\RR^3)}
\ee
and \fref{phiphi} ensure:
\be
\label{linftyph}
|\Pi h(e)|\leq C_\e  \ \ \mbox{on} \ \ \mbox{Supp}(\chi_\e).
\ee
Let us then define on $ \widetilde{\mathcal U}=\mathcal U\cap (0,r(e_0))\times (\phi_Q(0),e_0)$: 
\be
\label{deff}
f(r,e)=\int_0^r (h(\tau)-\Pi h(e))\sqrt{2(e-\phi_Q(\tau))}\,\tau^2d\tau.
\ee
Then $f$ is $\mathcal C^1$ with respect to the variable $r>0$ with
\be
\label{calculfonda}
Tf=h-\Pi h
\ee 
on $\widetilde{\mathcal U}$, where $T$ is given by \fref{deft}. Moreover, \fref{radial} and \fref{linftyph} yield the bound at the origin: $\forall e\in \mbox{Supp}(\chi_\e)$,
\be
\label{boundorigin}
|f(r,e)|\leq C_{\e}r^{5/2},
\ee
and  from \fref{phiphi} we get
$$
f\left(r(e),e\right)=\sqrt{2}\int_0^{+\infty} (h(\tau)-\Pi h(e))(e-\phi_Q(\tau))_+^{1/2}\,\tau^2d\tau=0.
$$
Hence, near the boundary $r=r(e)$, we estimate using  \fref{linftyph}: $\forall r\geq r_\eps$, \ \ $\forall e\in \mbox{Supp}\chi_\e$,
\be
\label{boundaryre}
|f(r,e)| =  \left|\int_r^{r(e)}(h(\tau)-\Pi h(e))\sqrt{2(e-\phi_Q(\tau))}\tau^2d\tau\right|\leq C_{\e}(e-\phi_Q(r))^{3/2},
\ee
where we used $e-\phi_Q(r)\sim C(r(e)-r)$ deduced from $\phi_Q'(r)\gtrsim \phi_Q'(r_\eps)>0$. 

We now integrate by parts from \fref{calculfonda} using the cancellations at the boundary of $\mathcal U$ given by  \fref{boundorigin}, \fref{boundaryre} and the bounds \fref{linftyph}, \fref{radial}:
\bee
& & \int_{\RR^6}\chi_{\e}|F'(e)|(h-\Pi h)^2dxdv \\
&& \qquad= 16\pi^2\int_{\phi_Q(0)}^0 \chi_{\e}|F'(e)|de\int_0^{r(e)} (h-\Pi h)Tf\sqrt{2(e-\phi_Q(r)}\,r^2dr\\
&&\qquad =16\pi^2 \int_{\phi_Q(0)}^0\chi_{\e}|F'(e)|de \int_0^{r(e)} (h-\Pi h)\pa_r fdr\\
&&\qquad =   -16\pi^2 \int_{\widetilde{\mathcal U}}\chi_{\e}|F'(e)|f\pa_r h \,dedr.
\eee
We now use Cauchy-Schwarz together with the identity 
$$\rho_Q(r)=\frac{8\pi\sqrt{2}}{3}\int_{\phi_Q(0)}^0|F'(e)|(e-\phi_Q(r))_+^{3/2}de$$
to estimate:
\bea
\label{cneoeiyoe}
\nonumber & &  \int_{\RR^6}\chi_{\e}|F'(e)|(h-\Pi h)^2dxdv  \\
\nonumber&&\qquad \leq   (4\pi)^{3/2}\|\nabla h\|_{L^2(\RR^3)}\left(\int_0^{r(e_0)}\frac{dr}{r^2}\left(\int_{\phi_Q(r)}^{e_0}\chi_\e|F'(e)|fde\right)^2\right)^{1/2}\\
\nonumber & &\qquad \leq (4\pi)^{3/2} \|\nabla h\|_{L^2(\RR^3)}\left[\frac{3}{8\pi\sqrt 2}\int_0^{r(e_0)}\frac{\rho_Q(r)}{r^2}dr\int_{\phi_Q(r)}^{e_0}\chi_\e|F'(e)|\frac{f^2}{(e-\phi_Q(r))^{3/2}}de\right]^{1/2}\\
& &\qquad =  \|\nabla h\|_{L^2(\RR^3)}\left(3\int\chi_\e\rho_Q(r)\frac{f^2}{r^4(\sqrt{2(e-\phi_Q)})^4}|F'(e)|dxdv\right)^{1/2}.
\eea
We now claim the following Hardy type control:
 \be
\label{keyhardy}
3\int\chi_\e\left(\rho_Q+\frac{\phi_Q'}{r}\right)\frac{f^2}{r^4(\sqrt{2(e-\phi_Q)})^4}|F'(e)|dxdv\leq \int \chi_\e|F'(e)|(Tf)^2dxdv.
\ee
Assume \fref{keyhardy}, then \fref{calculfonda} and \fref{cneoeiyoe} yield: 
$$\int_{\RR^6}\chi_{\e}|F'(e)|(h-\Pi h)^2dxdv+3\int\chi_\e\frac{\phi_Q'}{r}\frac{f^2}{r^4(\sqrt{2(e-\phi_Q)})^4}|F'(e)|dxdv\leq \int_{\RR^3}|\nabla h|^2dx.$$ 
Letting $\e\to 0$ now yields $(\mathcal Lh,h)\geq 0$. Moreover $(\mathcal L h,h)=0$ implies $f=0$ in $\tilde{\mathcal U}$, thus $h(r)=\Pi h(e)$ on $\tilde{\mathcal U}$ and $(\mathcal L h,h)=\int |\nabla h|^2=0$ and thus $h$ is zero. This concludes the  proof of \fref{tobeprovedformequadra}.

\bs
\ni
{\em Step 5. Hardy type control.}

\ms
\ni
The Hardy control \fref{keyhardy} is a consequence of the convexity estimate \fref{lowerboundconvexity}. Indeed, let $g$ be a given smooth function in $\tilde{\mathcal U}$, let $f=qg$ and compute:
\bea
\label{estheuieb}
\nonumber (Tf)^2 & = & (gT q+qT g)^2=g^2(T q)^2+g(T g)T(q^2)+q^2(T g)^2\\
\nonumber & = & g^2(T q)^2+T(q^2g(T g))-q^2((T g)^2+gT^2g)+q^2(T g)^2\\
& \geq & T(q^2gT g)-q^2gT^2g=T(q^2gT g)-\frac{T^2g}{g}f^2.
\eea
We now look for $g$ such that 
\be
\label{keyestimateg}
-\frac{T^2g}{g}\geq \frac{3}{\left[r\sqrt{2(e-\phi_Q)}\right]^4}\left(\rho_Q+\frac{\phi_Q'}{r}\right).
\ee
Let $$u=\sqrt{2(e-\phi_Q)} \ \ \mbox{so that} \ \ Tg(r,u)=\frac{\pa_r g}{r^2u}-\frac{\phi_Q'}{r^2u^2}\pa_ug,$$ and thus:
\bee
T^2g & = & \frac{1}{r^4u^2}\left[\pa^2_{rr}g-\frac{2}{r}\pa_rg\right]-\frac{\rho_Q}{r^4u^3}\pa_ug+\frac{\phi_Q'}{r^4u^4}\left[\pa_rg+4\frac{u}{r}\pa_ug-2u\pa^2_{ru}g\right]\\
& + & \frac{(\phi_Q')^2}{r^4u^5}\left[u\pa^2_{uu}g-2\pa_ug\right],
\eee 
where we used the Poisson equation satisfied by $\phi_Q$. The choice $g=r^3u^3$ yields: 
$$-\frac{T^2g}{g}=\frac{3}{r^4u^4}\left(\rho_Q+\frac{\phi_Q'}{r}\right).$$
Injecting this into \fref{estheuieb} and integrating on $\tilde{\mathcal U}$ yields: 
\bee
\int \chi_\e|F'(e)|(Tf)^2dxdv & \geq & 3\int\chi_\e\left(\rho_Q+\frac{\phi_Q'}{r}\right)\frac{f^2}{r^4(\sqrt{2(e-\phi_Q)})^4}|F'(e)|dxdv\\
& + & \int \chi_\e|F'(e)|T\left(f^2\frac{T g}{g}\right)dxdv.
\eee
The bounds \fref{boundorigin}, \fref{boundaryre} now justify the integration by parts
$$\int \chi_\e|F'(e)|T\left(f^2\frac{T g}{g}\right)dxdv=16\pi^2\int_{\phi_Q(0)}^{e_0}\chi_\e|F'(e)|de\int_0^{r(e)}\partial_r\left(f^2\frac{T g}{g}\right)dr=0
$$
and \fref{keyhardy} follows. This concludes the proof of Proposition \ref{antonov}.
\qed


\subsection{Proof of Proposition \ref{propJ}}


We are now in position to conclude the proof of Proposition \ref{propJ} which is a classical consequence of modulation theory coupled with the coercivity estimate \fref{coercivityradial}.

\bs
\ni
{\em Step 1. Implicit function theorem}

\ms
\ni
Given $\alpha>0$, let $U_{\alpha}=\{\phi\in \dot{H}^1(\RR^3); \ \ \|\nabla \phi-\nabla \phi_Q\|_{L^2}< \alpha \}$, and for $\phi \in \dot{H}^1$, $z\in \RR^3$, define
\be
\label{eps1}
  \e_{z}(x)=\phi(x+z) - \phi_Q(x).
\ee
We claim that there exists $\overline \alpha>0$, a neighbourhood $V$ of the origin in $\RR^3$ and a unique $C^1$ map $U_{\overline \alpha}\to V$ such that if $\phi\in U_{\overline\alpha}$, there is a unique $z\in V$ such that $\e_{z}$ defined as in (\ref{eps1}) satisfies
\be
\label{ortho}
\forall 1\leq i\leq 3, \qquad \int_{\RR^3} \e_{z}\Delta(\pa_{x_i}\phi_Q)dx=0. 
\ee
Moreover, there exists a constant $C>0$ such that if $u\in U_{\overline\alpha}$,  then 
 \be
 \label{bceihyeofuyeo}
 |z|+\|\na \eps_z\|_{L^2}\le C\|\nabla \phi-\nabla \phi_Q\|_{L^2}.
 \ee 
 Indeed, we define the following functionals of $(\phi,z)$: $$
  \mathcal F_i(\phi,z)= \int_{\RR^3} \e_{z}\Delta(\pa_{x_i}\phi_Q)dx, \ \ 1\leq i\leq 3 $$ and obtain at the point $(\phi,z) = (\phi_Q,0)$,
$$\frac{\partial \mathcal F_i}{\partial z_j}=-\delta_{ij}\|\na \pa_{x_i}\phi_Q\|_{L^2}^2.$$
The Jacobian of the above functional is $-\Pi_{i=1}^3\|\na \pa_{x_i}\phi_Q\|_{L^2}^2<0$, hence the implicit function theorem ensures the existence of $\overline \alpha>0$, a neighbourhood $V$ of the origin in 
$\RR^3$ and a unique $C^1$ map $U_{\overline{\alpha}}\rightarrow V$
such  that (\ref{ortho}) holds.

\bs
\ni
{\em Step 2. Conclusion}

\ms
\ni
Let $\phi\in \calX$ with $$\inf_{z\in \RR^3}\left(\|\phi-\phi_Q(\cdot-z)\|_{L^{\infty}}+\|\nabla\phi-\nabla\phi_Q(\cdot-z)\|_{L^{2}}\right)<\delta_0$$ for some small enough $\delta_0>0$ to be chosen later. Then there exists $z_1$ such that  
\be
\label{estzuniniti}
\|\phi-\phi_Q(\cdot-z_1)\|_{L^{\infty}}+\|\nabla\phi-\nabla\phi_Q(\cdot-z_1)\|_{L^{2}}<2\delta_0.
\ee
For $\delta_0\leq \frac{\overline{\alpha}}{2}$ small enough, we may apply Step 1 to $\phi(x+z_1)$ and find $z_2\in \RR^3$, $\e\in \dot{H}^1$ satisfiying the orthogonality conditions \fref{ortho} and the smallness \fref{bceihyeofuyeo} such that $\phi(x+z_1)=(\phi_Q+\e)(x-z_2)$, or equivalently 
\be
\label{zphi}
\phi(x)=(\phi_Q+\e)(x-z_{\phi}), \ \ z_{\phi}=z_1+z_2.
\ee
In fact, for $\delta_0$ small enough, a shift $z_\phi$ satisfying \fref{zphi}, the orthogonality conditions \fref{ortho} and the smallness condition \fref{bceihyeofuyeo}, is unique. This is a simple consequence of the uniqueness of the pair $(z_2,\e_{z_2})$ in Step 1. The continuity of the map $\phi\to z_{\phi}$ from $(\dot{H}^1,\|\cdot\|_{\dot{H}^1})\to \RR^3$ then follows. Moreover, from \fref{bceihyeofuyeo}, \fref{estzuniniti}: 
\bee
\|\e\|_{L^{\infty}} & = & \|\phi(x+z_1+z_2)-\phi_Q(x)\|_{L^{\infty}}\\
& \leq & \|\phi(x+z_1+z_2)-\phi_Q(x+z_2)\|_{L^{\infty}}+\|\phi_Q(x+z_2)-\phi_Q(x)\|_{L^{\infty}}\\
& \leq & \|\phi(x+z_1)-\phi_Q(x)\|_{L^{\infty}}+C|z_2|\\
& \leq & \|\phi(x+z_1)-\phi_Q(x)\|_{L^{\infty}}+C\|\nabla\phi(x+z_1)-\nabla\phi_Q(x)\|_{L^{2}}\leq C\delta_0.
\eee
Provided $\delta_0$ small enough, we may now apply the Taylor expansion \fref{taylor} together with the coercivity \fref{coercivityradial} and the orthogonality conditions \fref{ortho}, and obtain from the translation invariance of $\mathcal J$:
\bee
{\mathcal J}(\phi)-{\mathcal J}(\phi_Q) & = & {\mathcal J}(\phi_Q+\e)-{\mathcal J}(\phi_Q)\geq c_0\|\nabla \e\|_{L^2}^2-\eta(\|\e\|_{L^{\infty}})\|\nabla \e\|_{L^2}^2\geq \frac{c_0}{2}\|\nabla \e\|_{L^2}^2\\
 & \geq & \frac{c_0}{2}\|\nabla \phi-\nabla\phi_Q(\cdot-z_{\phi})\|_{L^2}^2.
 \eee
 This concludes the proof of Proposition \ref{propJ}.


\section{Compactness of local minimizing sequences of  the Hamiltonian}


The aim of this section is to prove  the following compactness result which is the heart of the proof of Theorem \ref{thm}.
 \begin{proposition}[Compactness of local minimizing sequences]
\label{theo-compacite-f}
Let  $\delta_{0}>0$ be as in Proposition \pref{propJ}. Let $\phi\to z_{\phi}$ the continuous map from $(\dot{H}^1,\|\cdot\|_{\dot{H}^1})\rightarrow \RR^3$ build in Proposition \pref{propJ}. Let $f_n$ be a sequence of functions of $\calE$, bounded in $L^\infty$, such that 
\be
\label{condfn1}
\inf_{z\in \RR^3}\left(\|\phi_{f_n}-\phi_Q(\cdot-z)\|_{L^{\infty}}+\|\nabla\phi_{f_n}-\nabla\phi_Q(\cdot-z)\|_{L^{2}}\right)<\delta_0,
\ee
and
\be
\label{condfn2}
\limsup_{n\to +\infty}\calH(f_n)\leq  \calH(Q),\qquad f_n^* \to Q^*  \mbox { in }  L^1(\RR_+)\quad \mbox{as }n\to +\infty.
\ee
Then 
\be
\label{convfn}
\int(1+|v|^2)|f_n-Q(x-z_{\phi_{f_n}})|\to 0 \ \ \mbox{as} \ \ n\to+\infty.
\ee
\end{proposition}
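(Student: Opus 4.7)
\emph{Plan and Step 1 (convergence of the Hamiltonians and Poisson fields).} The plan is to follow the four-step scheme outlined in the introduction, combining Proposition \ref{propclemonotonie} (sandwich $\mathcal{H}(f) \geq \mathcal{J}_{f^*}(\phi_f) \geq \mathcal{H}(\widehat{f})$ with $\widehat{f} := f^{*\phi_f}$) and Proposition \ref{propJ} (coercivity of $\mathcal{J}$ at $\phi_Q$). The uniform $L^\infty$ bound on $\phi_{f_n}$ (from \eqref{cphi} and uniform $L^1 \cap L^{5/3}$ bounds on $\rho_{f_n}$), together with the change-of-variable formula \eqref{changeofvariable} applied to $\mathcal{J}_{g^*}(\phi) = \mathcal{H}(g^{*\phi}) + \tfrac{1}{2}\|\nabla\phi - \nabla\phi_{g^{*\phi}}\|_{L^2}^2$ and the hypothesis $\|f_n^* - Q^*\|_{L^1} \to 0$, yields $\mathcal{J}_{f_n^*}(\phi_{f_n}) - \mathcal{J}(\phi_{f_n}) \to 0$. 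Sandwiched with $\mathcal{H}(f_n) \leq \mathcal{H}(Q) + o(1)$ from above and $\mathcal{J}(\phi_{f_n}) \geq \mathcal{J}(\phi_Q) = \mathcal{H}(Q)$ from below (by Proposition \ref{propJ} and $Q^{*\phi_Q} = Q$), this forces $\mathcal{J}(\phi_{f_n}) \to \mathcal{J}(\phi_Q)$. The coercive estimate \eqref{coercivityloc} with $z_n := z_{\phi_{f_n}}$ then gives $\nabla \phi_{f_n} - \nabla \phi_Q(\cdot - z_n) \to 0$ in $L^2$, which upgrades to uniform convergence $\phi_{f_n}(\cdot + z_n) \to \phi_Q$ on $\RR^3$ via uniform H\"older regularity and uniform decay at infinity.

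\emph{Step 2 (strong convergence of $\widehat{f_n}$).} Since $Q^*$ is continuous with compact support, the $L^1$-convergence $f_n^* \to Q^*$ of non-increasing functions upgrades to uniform convergence on $[0,+\infty)$. The explicit formula
\[
\widehat{f_n}(x+z_n, v) = f_n^*\bigl(a_{\phi_{f_n}(\cdot+z_n)}(\tfrac{|v|^2}{2} + \phi_{f_n}(x+z_n))\bigr),
\]
combined with Step 1 and the continuity of $\phi \mapsto a_\phi$, gives pointwise convergence $\widehat{f_n}(\cdot + z_n, v) \to Q(x,v)$ a.e., and in fact uniform on the phase-space support of the $\widehat{f_n}$, which is itself uniformly bounded (on that support the energy is $\leq e_0^n$ and $e_0^n \to e_0 < 0$, forcing $|v|^2 \leq 2\|\phi_{f_n}\|_{L^\infty}$ bounded and $x$ in a uniform ball). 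Dominated convergence then gives
\[
\int_{\RR^6}(1+|v|^2)\bigl|\widehat{f_n}(\cdot + z_n, v) - Q(x,v)\bigr|\, dxdv \to 0,
\]
and in particular $\mathcal{H}(\widehat{f_n}) \to \mathcal{H}(Q)$.

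\emph{Step 3 (vanishing of $f_n - \widehat{f_n}$).} From identity \eqref{identitypartic} and Steps 1--2,
\[
\int_{\RR^6} \Bigl(\tfrac{|v|^2}{2} + \phi_{f_n}\Bigr)(f_n - \widehat{f_n})\, dxdv \;=\; \mathcal{H}(f_n) - \mathcal{J}_{f_n^*}(\phi_{f_n}) \;\to\; 0,
\]
and the layer-cake representation developed in the proof of Proposition \ref{propclemonotonie} (summing both lower bounds) shows that this non-negative quantity equals
\[
I_n \;:=\; \int_0^{\|f_n\|_{L^\infty}} dt \int_{S_1(t) \cup S_2(t)} |e_n - \lambda_n(t)|\, dxdv,
\]
with $e_n = |v|^2/2 + \phi_{f_n}$ and $\lambda_n = (f_n^* \circ a_{\phi_{f_n}})^{-1}$. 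For any $\delta > 0$, Chebyshev bounds the contribution of $\{|e_n - \lambda_n(t)| \geq \delta\}$ by $\delta^{-1} I_n$, while the change of variable $u = \lambda_n(t)$ (giving $dt = |(f_n^* \circ a_{\phi_{f_n}})'(u)|\,du$ of total mass $\|f_n\|_{L^\infty}$, together with $\mathrm{meas}(\{e_n \in [u-\delta, u+\delta]\}) \leq 2\delta \cdot a_{\phi_{f_n}}'(\xi) \leq C\delta$ from the uniform bound on $a_{\phi_{f_n}}'$) controls the thin layer by $C'\delta$; hence $\int|f_n - \widehat{f_n}|\,dxdv \leq \delta^{-1}I_n + C'\delta \to 0$. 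For the $|v|^2$-weight, the key observation is that on $S_2(t)$ the inequality $\widehat{f_n} > t > 0$ forces $e_n \leq \lambda_n(t) < 0$ and hence $|v|^2 \leq 2\|\phi_{f_n}\|_{L^\infty}$ is uniformly bounded; so $\int |v|^2 (\widehat{f_n} - f_n)_+\,dxdv \to 0$. Combined with $\int |v|^2 (f_n - \widehat{f_n})\,dxdv \to 0$ (which follows from $\mathcal{H}(f_n), \mathcal{H}(\widehat{f_n}) \to \mathcal{H}(Q)$ and the convergence of the Dirichlet energies from Steps 1--2), this gives $\int |v|^2 (f_n - \widehat{f_n})_+\,dxdv \to 0$. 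Combining with Step 2 yields \eqref{convfn}.

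\emph{Main obstacle.} The delicate point is the thin-layer estimate in Step 3. It relies crucially on the uniform control of the Jacobian $a_{\phi_{f_n}}'$ on the relevant energy range (provided by Step 1's uniform closeness of $\phi_{f_n}$ to a translate of $\phi_Q$), and on the strict monotonicity $F' < 0$ of the limit profile, which ensures that $\lambda_n = (f_n^* \circ a_{\phi_{f_n}})^{-1}$ is well-defined and that the change of variable $u = \lambda_n(t)$ is non-degenerate. This is where the sharp non-degeneracy hypothesis of Theorem \ref{thm} plays its quantitative role.
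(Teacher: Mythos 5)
Your Step~1 is essentially the paper's own: the sandwich $\mathcal H(f_n)\geq \mathcal J_{f_n^*}(\phi_{f_n})\geq \mathcal J(\phi_{f_n})-\|\phi_{f_n}\|_{L^\infty}\|f_n^*-Q^*\|_{L^1}$ combined with Proposition \ref{propJ} is exactly the bound \fref{controlH-fstar-J}, and it does give $\nabla\phi_{f_n}(\cdot+z_n)\to\nabla\phi_Q$ in $L^2$. The gaps are in Steps~2--3. First, you claim the support of $\widehat{f_n}=f_n^{*\phi_{f_n}}$ is uniformly bounded because ``$e_0^n\to e_0$''. Nothing in the hypotheses controls ${\rm meas}({\rm Supp}\,f_n)$: taking $f_n=\max(Q,\tfrac1n\un_{B_n})$ with $B_n=\{|x|\leq R_n,|v|\leq 1\}$ and $R_n^3/n\to 0$, $R_n\to\infty$, all assumptions \fref{condfn1}--\fref{condfn2} hold while $f_n^*>0$ on $[0,cR_n^3]$, so $\widehat{f_n}$ is supported up to energies $e_0^n=a_{\phi_{f_n}}^{-1}(cR_n^3)\to 0^-$ and its spatial support is unbounded in $n$. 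This breaks the dominated convergence of Step~2 as written (repairable by splitting off $\{\widehat{f_n}<\epsilon\}$). Second, and more seriously, the thin-layer bound of Step~3 needs $\sup_u\,{\rm meas}\{|e_n-u|<\delta\}=\sup_u\bigl(a_{\phi_{f_n}}(u+\delta)-a_{\phi_{f_n}}(u-\delta)\bigr)\lesssim\delta$ uniformly over the essential range of $\lambda_n=(f_n^*\circ a_{\phi_{f_n}})^{-1}$. By \fref{aphiprime}, $a'_{\phi}(e)\to+\infty$ as $e\to 0^-$, the range of $\lambda_n$ is not bounded away from $0$ by the previous point, and for $u+\delta\geq 0$ the layer has infinite measure; so the claimed bound $C'\delta$ is unjustified without first truncating the low levels $t\leq t_0$. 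In addition, the change of variable $dt=|(f_n^*\circ a_{\phi_{f_n}})'(u)|\,du$ presumes an absolute continuity that $f_n^*$ (merely nonincreasing) need not have, and the non-degeneracy you invoke ($F'<0$) concerns the limit profile, not $f_n^*$. These are precisely the ``weighted bathtub'' difficulties the authors flag in Comment~3 after Theorem \ref{thm} as being delicate here.

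The paper sidesteps all of this with a qualitative argument worth comparing with yours. It rearranges $f_n$ with respect to the \emph{fixed} field $\phi_Q$ rather than $\phi_{f_n}$: by \fref{changeofvariable} and Lemma \ref{id-Q}, $\|f_n^{*\phi_Q}-Q\|_{L^1}=\|f_n^*-Q^*\|_{L^1}\to 0$ for free, so no analogue of your Step~2 is needed, and the identity \fref{decomp-duale} (taken with $\phi_Q$, not $\phi_{f_n}$) together with the one-line estimate $\bigl|\int(\tfrac{|v|^2}{2}+\phi_Q)(Q-f_n^{*\phi_Q})\bigr|\leq|\phi_Q(0)|\,\|Q^*-f_n^*\|_{L^1}$ reduces everything to $T_n=\int(\tfrac{|v|^2}{2}+\phi_Q)(f_n-f_n^{*\phi_Q})\to 0$. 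From the layer-cake form of $T_n$ one extracts only \emph{almost-everywhere} convergence along a subsequence; the continuity and strict monotonicity of $F$ then force $\un_{\{f_n\leq t<Q\}}\to 0$ a.e., and dominated convergence with the integrable majorant $\un_{\{t<Q\}}$ gives $\int(Q-f_n)_+\to 0$, hence $\|f_n-Q\|_{L^1}\to 0$, the kinetic part following from energy conservation as in your final paragraph. If you wish to keep your quantitative route, you must first show that the levels $t\leq t_0$ contribute $O(\epsilon)$ (using $f_n^*\to Q^*$), so that $\lambda_n(t)$ stays in a compact subinterval of $[\phi_Q(0),0)$ on which $a'_{\phi_{f_n}}$ is uniformly bounded; without that truncation the argument fails.
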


\begin{proof}

\medskip

{\em Step 1}:  {\em Compactness of the potential}

\medskip
\ni
 We first claim the following quantitative lower bound which generalizes the monotonicity formula \fref{keymononicity}: let $f\in \mathcal E$ such that $\phi_f$ satisfies \fref{linfitysmallnes}, let $z_{\phi_f}$ given by Proposition \ref{propJ}, then 
\be
\label{controlH-fstar-J} 
 \mathcal H(f) - \mathcal H(Q) + \|\phi_{f} \|_{L^{\infty}} \|f^*-Q^*\|_{L^1} \geq c_0\|\nabla \phi_f-\nabla \phi_{Q}(\cdot-z_{\phi_f})\|_{L^2}^2.
\ee
Indeed,
\be
\label{bla}\calH(f) -\calH(Q) \geq \mathcal J_{f^*}(\phi_{f}) - \mathcal J(\phi_{Q})=  \mathcal J_{f^*}(\phi_{f}) - \mathcal J(\phi_{f})+ \mathcal J(\phi_{f})- \mathcal J(\phi_{Q}),
\ee
where we have used that $\calH(Q)= \mathcal J(\phi_{Q})$. Now, we recall that
$$\mathcal J_{f^*}(\phi)= \int_{\RR^6}\left(\frac{|v|^2}{2}+\phi\right) f^{*\phi}(x,v) dx dv + \frac{1}{2} \int_{\RR^3}|\nabla \phi|^2 dx.,$$  and deduce from the change of variables formula \fref{changeofvariable} that
$$\mathcal J_{f^*}(\phi_{f}) - \mathcal J(\phi_{f})= \int_{0}^{+\infty}   a_{\phi_{f}}^{-1}(s)\left(f^*(s) - Q^*(s)\right) ds.$$
Since $|a_{\phi_{f}}^{-1}(s) |\leq - \mbox{min}\ \phi_{f} = \|\phi_{f}\|_{L^\infty}$, we have
$$\mathcal J_{f^*}(\phi_{f}) - \mathcal J(\phi_{f})\geq  - \|\phi_{f}\|_{L^\infty} \|f^* - Q^*\|_{L^1}.$$
Inserting this estimate into \fref{bla} and using Proposition \fref{controlH-fstar-J} yields \fref{controlH-fstar-J} .

Let  us now consider a sequence $f_{n}\in\mathcal E$ satisfying the assumptions of  Proposition \ref{theo-compacite-f},  then \fref{controlH-fstar-J}  applied to $f_{n}$ ensures:
\be
\label{conv-phin}
\|\nabla \phi_{f_{n}}(.+z_{\phi_{f_n}})- \nabla \phi_{Q}\|_{L^2} \to 0, \qquad \mbox{as}\  n\to \infty.
\ee

\bigskip
\ni
{\em Step 2:}  {\em Strong convergence of $f_{n}$ to $Q$}

\medskip
\ni
To ease notations, we shall still denote
by  $f_{n}$ the translated function $f_{n}(.+z_{\phi_{f_n}},v)$. We then observe the identity:
\be
\label{decomp-duale}
\mathcal H(f_{n})-\mathcal H(Q) + \frac{1}{2}\|\nabla \phi_{f_{n}}- \nabla \phi_{Q}\|_{L^2}^2= \int_{\RR^6}\left(\frac{|v|^2}{2} + \phi_{Q}(x)\right) (f_{n}-Q)dxdv
\ee
which implies, from \fref{condfn2} and \fref{conv-phin}, that
\be
\label{conv1}
\int_{\RR^6}\left(\frac{|v|^2}{2} + \phi_{Q}(x)\right) (f_{n}-Q)dxdv  \to 0, \qquad  \mbox{as}\  n\to \infty.
\ee
Now, we observe from the change of variables \fref{changeofvariable} that
$$\int_{\RR^6}\left(\frac{|v|^2}{2} + \phi_{Q}(x)\right) (Q-f_{n}^{*\phi_{Q}})dxdv = \int_{0}^{+\infty}   a_{\phi_{Q}}^{-1}(s)\left(Q^*(s)- f_{n}^{*}(s)\right) ds.$$ 
Since  $|a_{\phi_{Q}}^{-1}(s)|\leq -\phi_{Q}(0)$ we get
$$\left|\int_{\RR^6}\left(\frac{|v|^2}{2} + \phi_{Q}(x)\right) (Q-f_{n}^{*\phi_{Q}})dxdv\right| \leq   |\phi_{Q}(0)|\|Q^*- f_{n}^{*}\|_{L^1},$$ 
which implies from \fref{condfn2} that
\be
\label{conv2}
\int_{\RR^6}\left(\frac{|v|^2}{2} + \phi_{Q}(x)\right) (Q-f_{n}^{*\phi_{Q}})dxdv  \to 0, \qquad  \mbox{as}\  n\to \infty.
\ee
Summing \fref{conv1} and \fref{conv2} yields
\be
\label{conv3}
T_{n}=\int_{\RR^6}\left(\frac{|v|^2}{2} + \phi_{Q}(x)\right) (f_{n}-f_{n}^{*\phi_{Q}})dxdv  \to 0, \qquad  \mbox{as}\  n\to \infty.
\ee
We now argue as in the proof of \fref{identity2}, and  write \fref{conv3} in the following equivalent form
 \be
\label{monotonie-1}
 \ds T_{n}=\int_{t=0}^{+\infty} dt  \left(\int_{S_{1}^n(t)}  \left(\frac{|v|^2}{2}+\phi_{Q}(x)\right)dxdv   - \int_{S_{2}^n(t)}  \left(\frac{|v|^2}{2}+\phi_{Q}(x)\right)dxdv\right) \ \ \  \to 0,
 \ee
 where
$$S^n_{1}(t)=\{(x,v)\in \RR^6,  f_n^{*\phi_{Q}}(x,v)\leq t<f_n(x,v)\}, $$
$$ S^n_{2}(t)=\{(x,v)\in \RR^6,   f_n(x,v)\leq t< f_n^{*\phi_{Q}}((x,v)\}.$$
{}From \fref{implicationtwo}, we have

$$  \frac{|v|^2}{2}+\phi_{Q}(x)\geq (f^*_{n}\circ a_{\phi_{Q}})^{-1}(t),  \qquad  \forall (x,v) \in S_{1}^n(t).$$
Thus
\be
\label{inegalite1}T_{n}\geq \int_{t=0}^{+\infty} dt  \left(\int_{S_{1}^n(t)}  (f^*_{n}\circ a_{\phi_{Q}})^{-1}(t)dxdv   - \int_{S_{2}^n(t)}  \left(\frac{|v|^2}{2}+\phi_{Q}(x)\right)dxdv\right).
\ee
As a consequence of the equimeasurability  of $f_n^{*\phi_{Q}}$ and $f_{n}$, we know that 
$$ \mbox{meas}(S^n_{1}(t))=  \mbox{meas}(S^n_{2}(t)),$$
and then \fref{inegalite1} gives:
\be
\label{inegalite2}T_{n}\geq \int_{t=0}^{+\infty} dt  \int_{S_{2}^n(t)}  \left[(f^*_{n}\circ a_{\phi_{Q}})^{-1}(t)   -   \left(\frac{|v|^2}{2}+\phi_{Q}(x)\right)\right]dxdv.
\ee
{}From \fref{implicationone}, we have
$$ (f^*_{n}\circ a_{\phi_{Q}})^{-1}(t)\geq \frac{|v|^2}{2}+\phi_{Q}(x),  \qquad  \forall (x,v) \in S_{2}^n(t)$$
Thus, from \fref{conv3} and \fref{inegalite2}, we get
\be
\label{keyaeconv1}
A_{n}=\left[(f^*_{n}\circ a_{\phi_{Q}})^{-1}(t)   -   \left(\frac{|v|^2}{2}+\phi_{Q}(x)\right)\right]\un_{S_{2}^n(t)} (x,v)\to 0
\ee
as $n\to + \infty$, for almost every $(t,x,v)\in \RR_+\times \RR^3\times\RR^3$ (up to a subsequence). We now claim that this implies
\be
\label{keyaeconv2}
B_{n}=\left[(Q^*\circ a_{\phi_{Q}})^{-1}(t)   -   \left(\frac{|v|^2}{2}+\phi_{Q}(x)\right)\right]\un_{\overline S_{2}^n(t)} (x,v)\to 0, 
\ee
as $n\to + \infty$, for almost every $(t,x,v)\in \RR_+\times \RR^3\times\RR^3$, where
 $$\overline S^n_{2}(t)=\{(x,v)\in \RR^6,  f_n(x,v)\leq t< Q(x,v)\}. $$
 To prove \fref{keyaeconv2}, we write 
 $$S_{2}^n= \left(S_{2}^n\backslash \overline S_{2}^n\right) \cup \left(S_{2}^n\cap  \overline S_{2}^n\right), \ \ \ \ \ \  \overline S_{2}^n= \left(\overline S_{2}^n\backslash S_{2}^n\right) \cup \left(S_{2}^n\cap  \overline S_{2}^n\right),$$
 and get
 \be
 \begin{array}{lll}
 \label{decomposition}
 A_{n}-B_{n} &= &\ds\left(\frac{|v|^2}{2}+\phi_{Q}(x) - (Q^*\circ a_{\phi_{Q}})^{-1}(t) \right)\un_{\overline S_{2}^n(t)\backslash S_{2}^n(t)} \\
  & + & \ds  \left((f^*_{n}\circ a_{\phi_{Q}})^{-1}(t)-\frac{|v|^2}{2}-\phi_{Q}(x) \right) \un_{S_{2}^n(t)\backslash \overline S_{2}^n(t)} \\
 &+&\ds  \left[ (f^*_{n}\circ a_{\phi_{Q}})^{-1}(t)- (Q^*\circ a_{\phi_{Q}})^{-1}(t) \right]\un_{S_{2}^n(t)\cap  \overline S_{2}^n(t)}\,.
 \end{array}
 \ee
 We shall now examine the behavior of each of  these terms  when $n\to \infty$.
 We first observe that for all $g,h \in L^1(\RR^6)$ with $g\geq 0$, $h\geq 0$, we have
\be
\label{magicidentity}
\int_{0}^{+\infty}  \mbox{meas}\left\{g\leq t<h\right\} dt = \int_{\RR^6}(h-g)_+ dxdv \leq \| g- h \|_{L^1},
\ee
and thus from \fref{condfn2}:
 $$\int_0^{+\infty} \mbox{meas}(S_{2}^n(t)\backslash \overline S_{2}^n(t)) dt \leq \|f_{n}^{*\phi_{Q}}-Q\|_{L^1} = \|f_{n}^*-Q^*\|_{L^1} \ \ \to 0.$$
 Using in addition the estimate
 $$\left|(f_{n}^*\circ a_{\phi_{Q}})^{(-1)}(t)\right| \leq |\phi_Q(0)|,$$ 
 we deduce that the first two terms of the decomposition \fref{decomposition} go to $0$ almost everywhere when $n$ goes to infinity.
 We now treat the third term and show that,  for all $(t,x,v)$,
 \be
 \label{liminf} \liminf_{n\to \infty} \left[ (f^*_{n}\circ a_{\phi_{Q}})^{-1}(t)- (Q^*\circ a_{\phi_{Q}})^{-1}(t) \right]\un_{S_{2}^n(t)\cap  \overline S_{2}^n(t)} \geq 0.
 \ee
 To prove \fref{liminf}, we first use the strong $L^1$ convergence \fref{condfn2} to get
 \be
 \label{vohoehohve1}
\forall e\in(\phi_Q(0),0)\backslash A, \ \ f_n^*(a_{\phi_Q}(e))\to Q^*(a_{\phi_Q}(e)),
 \ee
 where $A$ is a zero-measure set in $\RR$, and  claim that
the monotonicity of $f_n^*$ in $e$ and the {\it continuity} of $Q^*$ in $e$ ensure:
 \be
 \label{vohoehohve}
\forall e\in(\phi_Q(0),0), \ \ f_n^*(a_{\phi_Q}(e))\to Q^*(a_{\phi_Q}(e)).
 \ee
Indeed, let $e\in (\phi_Q(0),0)$, and $(x_{p},y_{p} )\in (\phi_Q(0),0)\backslash A$   such that $x_{p}\leq e \leq y_{p}$ and $x_{p}\to e$, $y_{p}\to e$.  As $f_n^*\circ a_{\phi_{Q}}$ is decreasing, we have
 $$f_n^*\circ a_{\phi_{Q}}(y_{p}) \leq f_n^*\circ a_{\phi_{Q}}(e) \leq f_n^*\circ a_{\phi_{Q}}(x_{p}).$$
From \fref{vohoehohve1}  we then get
 $$Q^*(a_{\phi_Q}(y_{p}))\leq \liminf_{n\to \infty} f_n^*\circ a_{\phi_{Q}}(e)\leq  \limsup_{n\to \infty} f_n^*\circ a_{\phi_{Q}}(e)\leq Q^*(a_{\phi_Q}(x_{p})).$$
 Now we pass to the limit $p\to \infty$ and use the continuity of $Q^*\circ a_{\phi_Q}$ to get the claim \fref{vohoehohve}.
 
  Now, we turn back to the proof of \fref{liminf} and fix  $(t,x,v)$. Take then any $e$ such that 
 \be
 \label{econ}\phi_{Q}(0)<e<0, \ \ \ \mbox{and}\ \ \  Q^*(a_{\phi_{Q}}(e)) >t,
 \ee
 which implies from  \fref{vohoehohve}:
 $$f_{n}^*(a_{\phi_{Q}}(e)) > t,$$
 for $n$ large enough. Using the definition of the pseudo inverse given in Lemma \ref{lemmapseudoinverse}, we then obtain 
 $e\leq (f^*_{n}\circ a_{\phi_{Q}})^{-1}(t)$ for $n$ large enough, and hence
 $$e\leq \liminf_{n\to \infty}  (f^*_{n}\circ a_{\phi_{Q}})^{-1}(t).$$
 Since this equality  holds for all $e$ satisfying \fref{econ},  we conclude from the definition of the pseudo inverse $(Q^*\circ a_{\phi_{Q}})^{-1}(t)$ that
 $$\liminf_{n\to \infty}  (f^*_{n}\circ a_{\phi_{Q}})^{-1}(t)\geq (Q^*\circ a_{\phi_{Q}})^{-1}(t),$$
which yields \fref{liminf}.
 
 We now turn to  the decomposition \fref{decomposition} and get from \fref{liminf}
 $$\liminf (A_{n}-B_{n})\geq 0,\quad \mbox{for almost all }(t,x,v).$$
 Finally, observing that $B_{n}\geq 0$ and using \fref{keyaeconv1},  we conclude that
 \fref{keyaeconv2} holds true. Observe now that 
$$t<Q(x,v) \ \ \mbox{implies} \ \ Q(x,v)=F\left(\frac{|v|^2}{2}+\phi_Q(x)\right)>t.$$
By the assumptions of Theorem \ref{thm},  $e\to F(e)$ is continuous and strictly decreasing with respect to $e=\frac{|v|^2}{2}+\phi_Q(x)$ for $(x,v)\in \{Q>0\}$, and thus:
$$t<Q(x,v)\ \ \mbox{implies} \ \ (Q^*\circ a_{\phi_{Q}})^{(-1)}(t)- \frac{|v|^2}{2}-\phi_Q(x) >0.$$
We then deduce from \fref{keyaeconv2} and from $ \overline S^n_{2}(t)=\{(x,v): f_n(x,v)\leq t< Q(x,v)\} $ that, up to a subsequence extraction,
$$\un_{\{f_n\leq t<Q\}} \  \ \to  0, \ \ \mbox{as} \ \ n\to \infty,$$
for almost every $(t,x,v)\in\RR^*_+\times\RR^6$. 
Now from $\un_{\{f_n\leq t<Q\}} \leq \un_{\{t<Q\}} $ and
$$\int_0^{\infty} \int_{\RR^6} \un_{\{t<Q\}} dx dv dt = \|Q\|_{L^1} <+\infty.$$
we may apply the dominated convergence theorem to conclude:
$$\int_0^{\infty} \int_{\RR^6} \un_{\{f_n\leq t<Q\}} dx dv dt \to  0 \ \ \mbox{as} \ \ n\to \infty .$$
Injecting this into \fref{magicidentity} yields
\be
\label{Qfnplus}
\int_{\RR^6}(Q-f_{n})_{+} dx dv \to  0 \ \ \mbox{as} \ \ n\to \infty .
\ee 
Now we claim that, using $f_{n}^* \to Q^*$ in $L^1$, this implies
\be
\label{fnQplus}
\int_{\RR^6}(f_{n}-Q)_{+} dx dv \to  0 \ \ \mbox{as} \ \ n\to \infty .
\ee 
Indeed, we write
\bee
& & \hspace*{-1cm}\int_{\RR^6}(f_n-Q)_+ dxdv \leq  \int_{\RR^6}(f_n-f_n^{*\phi_{Q}})_+ dxdv +\int_{\RR^6}(f_n^{*\phi_{Q}}-Q)_+ dxdv \\
&&\leq   \int_{0}^{+\infty}\mbox{meas}\left\{f_n^{*\phi_{Q}}\leq t<f_n\right\} dt+\|f_n^{*\phi_{Q}}- Q \|_{L^1}\\
& &=  \int_{0}^{+\infty}\mbox{meas}\left\{f_n\leq t<f_n^{*\phi_{Q}}\right\} dt+\|f_n^{*}- Q^*\|_{L^1}  \\
&&=  \int_{\RR^6}(f_n^{*\phi_{Q}}-f_n)_+ dxdv+\|f_n^{*}- Q^* \|_{L^1}  \\
&&\leq  \int_{\RR^6}(Q-f_n)_+ dxdv+ \int_{\RR^6}(f_n^{*\phi_{Q}}-Q)_+dxdv +\|f_n^{*}- Q^* \|_{L^1}  \\
&&\leq  \int_{\RR^6}(Q-f_n)_+ dxdv+ 2\|f_n^{*}- Q^*\|_{L^1}  \\
\eee
where we repeatedly used \fref{magicidentity} and the fact that $f_n^{*\phi_{Q}} \in \Eq(f_n)$ implies
$$\forall t>0,\ \  \mbox{meas}\left\{f_n^{*\phi_{Q}}\leq t<f_n\right\}=\mbox{meas}\left\{f_n\leq t<f_n^{*\phi_{Q}}\right\}.$$
As $f_{n}^* \to Q^*$ in $L^1$, we then conclude that (\ref{Qfnplus}) implies (\ref{fnQplus}). 
Finally adding (\ref{Qfnplus}) and  (\ref{fnQplus}) gives
$$\| f_n- Q \|_{L^1} \to 0\ \mbox{as} \ \ n\to +\infty.$$
Furthermore,  \fref{condfn2} and the strong convergence $\na \phi_{f_n}\to \na \phi_Q$ in $L^2$ imply: $$\int_{\RR^6}|v|^2f_n\to \int_{\RR^6}|v|^2Q \ \ \mbox{ as }n\to +\infty,$$
Together with the a.e. convergence of $f_n$, this yields the strong $L^1$ convergence of $|v|^2f_n$ to $|v|^2Q$. Note that the uniqueness of the limit now implies the convergence of all the sequence $f_n$ which completes the proof of (\ref{convfn}).

This concludes the proof of Proposition \ref{theo-compacite-f}.

\end{proof}


\section{Non linear  stability of $Q$}


We now turn to the proof of the nonlinear stability result stated in Theorem \ref{thm}, which is a direct consequence of Proposition \ref{theo-compacite-f} and the known regularity of weak solutions to the Vlasov-Poisson system.

\bs
\ni
{\em Proof of Theorem \pref{thm}.}

\bigskip
\ni
{\em Step 1.}  {\em Continuity claim for weak solutions}

\ms
\ni
Let $f_0\in \calE$ and let $f(t)\in \calE$ be a corresponding weak solution to \fref{vp}. By the properties of weak solutions of the Vlasov-Poisson system \cite{dpl1,dpl2}, we have
\be
\label{cont2}
\forall t\geq 0,\qquad f(t)\in\Eq(f_0),\qquad \calH(f(t))\leq \calH(f_0).
\ee
We claim:
\be
\label{cont1}
\phi_f\in \calC([0,+\infty),L^\infty(\RR^3)\cap \dot{H}^1(\RR^3)).
\ee
Note that this implies from Proposition \ref{propJ} that 
\be
\label{cnofheoiyeo}
t\rightarrow z_{\phi_f(t)} \ \ \mbox{is continuous}.
\ee
To prove \fref{cont1}, recall that $f\in \calC([0,+\infty),L^1)$ (see \cite{dpl1,dpl2}) and hence \fref{cont1} follows from: $\forall f,g\in \mathcal E$,
\be
\label{se}
\|\nabla \phi_f-\nabla\phi_g\|_{L^2}+\|\phi_f-\phi_g\|_{L^\infty}\leq C_{f,g}\,\|f-g\|_{L^1}^{1/6},
\ee
where $C_{f,g}$ only depends on $\|f\|_\calE$ and $\|g\|_{\calE}$. Let us prove \fref{se}. First, from  H\"older:
$$
\forall x\in \RR^3,\quad |\phi_f-\phi_g|(x)=\left|\int_{\RR^3}\frac{\rho_f(y)-\rho_g(y)}{4\pi|x-y|}dy\right|\lesssim \|\rho_f-\rho_g\|_{L^{5/3}}^{5/6}\|\rho_f-\rho_g\|_{L^1}^{1/6},
$$
and from Hardy-Littlewood-Sobolev:
$$\|\nabla \phi_f-\nabla\phi_g\|_{L^2}\lesssim \|\rho_f-\rho_g\|_{L^{6/5}}\lesssim \|\rho_f-\rho_g\|_{L^1}^{7/12}\|\rho_f-\rho_g\|_{L^{5/3}}^{5/12}.$$
Second, by interpolation,
$$\|\rho_f-\rho_g\|_{L^{5/3}}\lesssim \|f-g\|_{L^\infty}^{2/5}\||v|^2(f-g)\|_{L^1}^{3/5}\leq C_{f,g}.$$
 Since 
$\|\rho_f-\rho_g\|_{L^1}\leq \|f-g\|_{L^1},$
this yields \fref{se} and the continuity \fref{cont1} of $\phi_f$ follows.

\bs
\ni
{\em Step 2:} {\em Conclusion.}

\medskip
\ni
An equivalent reformulation of Proposition \ref{theo-compacite-f} is the following:  for all $\e>0$ small enough, there exists $\eta>0$
such that if $f\in \calE$ with
\be
\label{e1}
\|f^*-Q^*\|_{L^1}\leq \eta,\quad \|f\|_{L^\infty}\leq \|Q\|_{L^\infty}+M,\quad \calH(f)\leq \calH(Q)+\eta
\ee
and
\be
\label{e2}
\inf_{z\in \RR^3}\left(\|\phi_{f}-\phi_Q(\cdot-z)\|_{L^{\infty}}+\|\nabla\phi_{f}-\nabla\phi_Q(\cdot-z)\|_{L^{2}}\right)<\delta_0,
\ee
then
\be
\label{e3}
\|(1+|v|^2)(f-Q(\cdot-z_{\phi_f}))\|_{L^1}\leq \e.
\ee

Let $\eps>0$ and let $\eta>0$ be the associated constant. We consider an initial data $f_0\in \mathcal E$ with
$$\|f_0-Q\|_{L^1}<\eta,\quad \|f_0\|_{L^\infty}\leq \|Q\|_{L^\infty}+ M \quad \mbox{and}\quad \calH(f_0)\leq \calH(Q)+ \eta$$ 
and a corresponding weak solution $f(t)$ of \fref{vp}. Observe that, by the contractivity of the symmetric rearrangement in $L^1$ (see \cite{lieb-loss}), we have
\be
\label{contract}
\|f_0^*-Q^*\|_{L^1}=\|f_0-Q\|_{L^1}\leq \eta.
\ee
Moreover,  \fref{se} implies that, for $\eta$ small enough, 
$$\|\nabla \phi_{f_0}-\nabla \phi_Q(\cdot-z_{\phi_{f_0}})\|_{L^2}+\|\phi_f(0)-\phi_Q(\cdot-z_{\phi_{f_0}})\|_{L^\infty}\leq \frac{\delta_0}{2}.$$ From \fref{cont2}, we first deduce that the corresponding solution $f(t)$ of \fref{vp} satisfies \fref{e1} for all $t\geq 0$. Hence, if we prove that 
\be
\label{w}
\forall t\geq 0,\qquad \|\nabla \phi_f(t)-\nabla \phi_Q(\cdot-z_{\phi_f(t)})\|_{L^2}+\|\phi_f(t)-\phi_Q(\cdot-z_{\phi_f(t)})\|_{L^\infty}<\delta_0,
\ee
then \fref{e3} holds true for all $t\geq 0$, which is nothing but \fref{eta2}. Now \fref{w} follows for $\eta>0$ small enough from a straightforward bootstrap argument using the continuity \fref{cont1}, \fref{cnofheoiyeo} and the bound \fref{se}. The proof of Theorem \pref{thm} is complete.
\qed


\begin{appendix}


\section{Proof of Lemma \ref{lemdiffa}}
\label{appendixa}

\begin{proof} The proof is similar to the one in \cite{LMR8} and we briefly sketch the argument for the sake of completeness. Recall that the set $\calX$ is convex, thus $\phi+\lambda h=(1-\lambda)\phi(x)+\lambda\widetilde \phi(x)$ belongs to $\calX$ for all $\lambda\in [0,1]$ and $a_{\phi+\lambda h}$ is well-defined.

\bs
\ni
{\em Step 1. Proof of (i).}

\ms
\ni
Let $e_1<0$ be fixed. For all $e\leq e_1$, we consider the domain
$$D_{\phi+\lambda h}(e)=\left\{x\in \RR^3:\quad (\phi+\lambda h)(x)<e\right\}.$$
{}From \fref{aphi}, we have
$$
a_{\phi+\lambda h}(e)=\frac{8\pi\sqrt{2}}{3} \int_{D_{\phi+\lambda h}(e)}\left(e-\phi(x)-\lambda h(x)\right)_+^{3/2}dx.
$$
We clearly have
$$D_{\phi+\lambda h}(e)\subset D_{\phi}(e_1)\cup D_{\widetilde\phi}(e_1).$$
Since $\phi(x)$ and $\widetilde \phi(x)$ go to zero at the infinity, $D_{\phi}(e_0)$ and $ D_{\widetilde\phi}(e_0)$ are bounded. Hence for all $e\leq e_1$, $D_{\phi+\lambda h}(e)$ is contained in a fixed compact domain of $\RR^3$. As in addition the functions $\phi$ and $\widetilde \phi$ are continuous, the Lebesgue dominated convergence theorem may thus be applied to obtain the continuity and the differentiability of $a_{\phi+\lambda h}(e)$ with respect to $\lambda$ and $e$. The expression \fref{derivaphi} follows.

\bs
\ni
{\em Step 2. Continuity of the function $\lambda\mapsto a_{\phi+\lambda h}^{-1}(s)$}.

\ms
\ni
Let $s\in \RR_+^*$. In this step, we prove that the function $\lambda\mapsto a_{\phi+\lambda h}^{-1}(s)$ is continuous. To this aim, we consider a sequence $\lambda_n\in [0,1]$ converging to $\lambda_0$ as $n\to +\infty$ and prove that $a_{\phi+\lambda_n h}^{-1}(s)$ converges to $a_{\phi+\lambda_0 h}^{-1}(s)$. We set
$$e_n=a_{\phi+\lambda_n h}^{-1}(s)\in (\min (\phi+\lambda_n h),0)\subset (\min \phi+\min \widetilde \phi,0).$$
Hence, up to a subsequence, $e_n$ converges to some $e\leq 0$ as $n\to +\infty$.

Let us prove that $e<0$ by contradiction. Assume that $e=0$. For $n$ large enough such that $\frac{\lambda_0}{2}\leq \lambda_n\leq \frac{1+\lambda_0}{2}$, we have
\bee
s=a_{\phi+\lambda_n h}(e_n)&=&\frac{8\pi\sqrt{2}}{3} \int_{\RR^3}\left(e_n-(1-\lambda_n)\phi(x)-\lambda_n \widetilde \phi(x)\right)_+^{3/2}dx\\
&\geq&\frac{8\pi\sqrt{2}}{3} \int_{\RR^3}\left(e_n-\frac{1-\lambda_0}{2}\phi(x)-\frac{\lambda_0}{2} \widetilde \phi(x)\right)_+^{3/2}dx=a_\psi(e_n),
\eee
where $\psi (x)=\frac{1-\lambda_0}{2}\phi(x)+\frac{\lambda_0}{2} \widetilde \phi(x)$. From Lemma \ref{lemaphi}, we have $\lim_{t\to 0-}a_\psi(t)=+\infty$, which implies that $\lim_{n\to+\infty}a_\psi(e_n)=+\infty$, a contradiction.

Therefore, we have $e_n\to e<0$. The continuity of $(\lambda,e)\mapsto a_{\phi+\lambda h}(e)$ proved in Step 1 gives that
$$s=a_{\phi+\lambda_n h}(e_n)\to a_{\phi+\lambda_0 h}(e)\mbox{ as }n\to +\infty.$$
Thus $e=a_{\phi+\lambda_0 h}^{-1}(s)$. This ends the proof of (ii).

\bs
\ni
{\em Step 3. Differentiability of $\lambda\mapsto a_{\phi+\lambda h}^{-1}(s)$}.

\ms
\ni
Denoting $\phi_0=\phi+\lambda_0h$ and $\phi_\lambda=\phi+\lambda h$, we write
\be
\label{aqw}
\begin{array}{ll}
\ds \frac{a^{-1}_{\phi_\lambda}(s)-a^{-1}_{\phi_0}(s)}{\lambda}
&\ds =\frac{a^{-1}_{\phi_\lambda}(s)-a^{-1}_{\phi_0}(s)}{a_{\phi_0}(a^{-1}_{\phi_\lambda}(s))-a_{\phi_0}(a^{-1}_{\phi_0}(s))}\,\frac{a_{\phi_0}(a^{-1}_{\phi_\lambda}(s))-a_{\phi_0}(a^{-1}_{\phi_0}(s))}{\lambda}\\[6mm]
&\ds=A_1(\lambda)\,A_2(\lambda),
\end{array}
\ee
where we have set
$$A_1(\lambda)=\frac{a^{-1}_{\phi_\lambda}(s)-a^{-1}_{\phi_0}(s)}{a_{\phi_0}(a^{-1}_{\phi_\lambda}(s))-a_{\phi_0}(a^{-1}_{\phi_0}(s))}\,,\quad A_2(\lambda)=\frac{a_{\phi_0}(a^{-1}_{\phi_\lambda}(s))-a_{\phi_\lambda}(a^{-1}_{\phi_\lambda}(s))}{\lambda},$$
and where we simply used that $a_{\phi_0}(a^{-1}_{\phi_0}(s))=s=a_{\phi_\lambda}(a^{-1}_{\phi_\lambda}(s))$. Let us examinate separately the convergence of the two factors $A_1$ and $A_2$ in \fref{aqw}. From Step 2, we have
$$\lim_{\lambda \to 0}a^{-1}_{\phi_\lambda}(s)=a^{-1}_{\phi_0}(s),$$
hence
\be
\label{aqw1}
\lim_{\lambda \to 0}A_1(\lambda)=\frac{1}{a_{\phi_0}'(a_{\phi_0}^{-1}(s))}=\frac{1}{\ds 4\pi\sqrt{2}\int_{\RR^3}(a^{-1}_{\phi_0}(s)-\phi_0(x))_+^{1/2}dx}.
\ee
Now, \fref{derivaphi} and Step 2 imply: \be
\label{aqw2}
\lim_{\lambda \to 0}A_2(\lambda)=
4\pi\sqrt{2}\int_{\RR^3}(a^{-1}_{\phi_0}(s)-\phi_0(x))_+^{1/2}h(x)dx.
\ee
Therefore, \fref{aqw}, \fref{aqw1} and \fref{aqw2} give \fref{derivaphim1}. This concludes the proof of Lemma \ref{lemdiffa}.
\end{proof}

\end{appendix}

\end{document}